\documentclass[10pt]{article}
\usepackage[
 includeheadfoot,
  top=2cm,
  bottom=2cm,
  left=3cm,
  right=3cm
]{geometry}

\newcommand{\papertitle}{Stable Matrix-Free Q-less QR via Preconditioning and Randomization}
\newcommand{\paperauthorone}{Oleg Balabanov}
\newcommand{\paperauthortwo}{Maksim Melnichenko}
\newcommand{\paperauthorthree}{Michael W. Mahoney}
\newcommand{\affiliationUCB}{University of California, Berkeley (UCB)}
\newcommand{\affiliationICSI}{International Computer Science Institute (ICSI)}
\newcommand{\affiliationLBNL}{Lawrence Berkeley National Laboratory (LBNL)}

\usepackage{xcolor}
\usepackage{booktabs}
\usepackage{multirow}
\usepackage{array}
\usepackage{float}
\usepackage{algorithm}
\usepackage{algpseudocode}
\usepackage{setspace}
\usepackage{amsmath, amssymb}
\usepackage{amsthm}
\usepackage[colorlinks=true,linkcolor=blue,citecolor=magenta]{hyperref}
\usepackage[capitalize]{cleveref}
\usepackage{bm} 
\providecommand{\mathbold}[1]{\bm{\mathsf{#1}}}
\usepackage{stmaryrd}
\usepackage{graphicx}
\usepackage{adjustbox}
\usepackage{placeins}
\SetSymbolFont{stmry}{bold}{U}{stmry}{m}{n}

\makeatletter
\newcommand{\footremember}[2]{%
    \begingroup
    \scriptspace=0pt\relax
    \def\@makefnmark{\hbox{\@textsuperscript{\normalfont\@thefnmark}}}%
    \footnote{#2}%
    \endgroup
    \newcounter{#1}%
    \setcounter{#1}{\value{footnote}}%
}
\newcommand{\footrecall}[1]{%
    \begingroup
    \scriptspace=0pt\relax
    \def\@makefnmark{\hbox{\@textsuperscript{\normalfont\@thefnmark}}}%
    \footnotemark[\value{#1}]%
    \endgroup
}
\makeatother

\newcommand{\code}[1]{\texttt{#1}}

\newcommand{\R}{\mathbb{R}}

\newcommand{\vct}[1]{\bm{#1}}

\newcommand{\mtx}[1]{\mathbold{#1}}

\DeclareMathOperator{\range}{range}

\DeclareMathOperator{\diag}{diag}

\DeclareMathOperator{\cond}{\mathrm{cond}}
\DeclareMathOperator{\trace}{tr}

\newcommand{\trans}{\text{\tiny{T}}}

\theoremstyle{plain}
\newtheorem{theorem}{Theorem}[section]
\newtheorem{proposition}[theorem]{Proposition}
\newtheorem{lemma}[theorem]{Lemma}

\newtheorem{corollary}[theorem]{Corollary}
\theoremstyle{definition}
\newtheorem{definition}[theorem]{Definition}
\newtheorem{assumption}[theorem]{Assumption}
\newtheorem{example}{Example}
\theoremstyle{remark}
\newtheorem{remark}{Remark}

\usepackage[utf8]{inputenc}
\usepackage[T1]{fontenc}

\hypersetup{
  pdftitle={\papertitle},
  pdfauthor={\paperauthorone, \paperauthortwo, \paperauthorthree}
}

\title{\papertitle}
\date{}

\author{
  \paperauthorone%
  \footremember{UCB}{\affiliationUCB}%
  \footremember{ICSI}{\affiliationICSI}%
  \footremember{LBNL}{\affiliationLBNL}%
  \and
  \paperauthortwo%
  \footrecall{UCB}%
  \footrecall{ICSI}%
  \and
  \paperauthorthree%
  \footrecall{UCB}%
  \footrecall{ICSI}%
  \footrecall{LBNL}%
}

\begin{document}
\maketitle
\begin{abstract}
This article develops matrix-free Q-less QR algorithms for orthogonalizing tall linear operators. Our main algorithm, CQRRTO, uses randomized preconditioning to make a single Cholesky QR pass numerically safe for any numerically full-rank operator, overcoming the stability issues of Cholesky QR and related methods that operate on the Gram matrix. Compared with deterministic stabilized variants such as Q-less Cholesky QR2 (reorthogonalized Cholesky QR), it remains numerically stable under substantially weaker conditioning assumptions while requiring fewer operator applications. Matrix-free Q-less QR can be useful for improving the numerical robustness of least-squares solves via the normal equations, spectral approximations in a low-dimensional basis, and inversion of operators with low-rank updates via the Woodbury identity.

We present a componentwise stability analysis of preconditioned Q-less Cholesky QR that applies to matrix-free operators and is modular with respect to the preconditioner, yielding guarantees for Q-less Cholesky QR, Cholesky QR2, shifted Cholesky QR3, and CQRRTO as special cases. All methods are implemented in the open-source RandLAPACK library. 
\end{abstract}

\section{Introduction}
Efficient, numerically robust algorithms for the Q-less QR factorization of a tall matrix $\mtx M\in\mathbb{R}^{m\times n}$ with $m\gg n$ can improve the stability of various computations involving the Gram~matrix
$$
\mtx G=\mtx M^{\trans}\mtx M\in\mathbb{R}^{n\times n}.
$$
Examples include solving least-squares problems via the normal equations, computing orthogonal projections onto $\range(\mtx M)$, computing Rayleigh–Ritz eigenpair approximations, and inverting operators with low-rank updates via the Woodbury identity. We discuss these examples in detail in \cref{subsec:apps}.

This paper is concerned with such algorithms in the matrix-free setting, where $\mtx M$ is available only as a \textit{linear operator}, accessed through its forward and adjoint applications $\mtx M[\cdot]$ and $\mtx M^\trans[\cdot]$. The operator setting has two important consequences. 
First, applying the operator may cost much less than multiplying by a dense matrix of the same dimensions.
Consequently, materializing and operating with $m\times n$ dense arrays may be undesired or prohibited. Second, the numerical stability guarantees must account for the implementation specific errors in the forward and adjoint calls.

Computing and then factorizing $\mtx G$ is attractive because it requires no
entrywise access to $\mtx M$ and maps well to parallel hardware~\cite{FukayaEtAl2014,YamazakiEtAl2015,TomasQuintanaOrti2019}. However, this approach entails a fundamental numerical weakness. Forming the Gram matrix squares the condition number of $\mtx M$: $\cond(\mtx G)=\cond(\mtx M)^2$. Errors in a factorization or solve can therefore grow with $\cond(\mtx M)^2$. When $\cond(\mtx M)$ approaches the $u^{-1/2}$ scale, where $u$ denotes the unit roundoff, rounding in the computed Gram matrix can destroy positive definiteness and cause the Cholesky factorization to~fail. 

A more robust alternative is to obtain the triangular factor through a QR factorization $\mtx M=\mtx Q\mtx R$.
In exact arithmetic,
\[
\mtx G =\mtx R^{\trans}\mtx R,
\]
so $\mtx R$ is precisely the Cholesky factor of the Gram matrix. A numerically stable QR algorithm can compute this factor without incurring the squared-conditioning sensitivity associated with explicitly forming $\mtx M^{\trans}\mtx M$. In the operator setting, however, the large factor $\mtx Q$ should neither be formed nor stored. Its action can instead be recovered implicitly from $\mtx M$ and $\mtx R$. For example, one can evaluate
$\mtx Q^{\trans}\vct b$ as $\mtx R^{-\trans}\mtx M^{\trans}\vct b$.
This leads naturally to the notion of a \textit{Q-less QR factorization}~\cite{Stewart1999}, in which only the triangular factor $\mtx R$ is retained. When $m\gg n$, this replaces the $\mathcal O(mn)$ storage and application costs associated with an explicit $\mtx Q$ by operator applications and $\mathcal O(n^2)$ dense linear algebra operations.

The difficulty is that the classical QR algorithms whose stability motivates this substitution are poorly suited to the Q-less operator setting. Householder and Givens QR~\cite{GolubVanLoan2013} retain transformations built from length-$m$ vectors. Gram–Schmidt explicitly constructs and stores the tall orthogonal basis. Cholesky QR (hereafter CholQR) is the exception.
It accesses $\mtx M$ only through forward and adjoint actions and produces $\mtx R$ without ever storing a tall orthogonal factor. 
Unfortunately, it computes $\mtx R$ through the explicitly formed Gram matrix, and therefore it inherits precisely the $\cond(\mtx M)^2$ scaling of rounding errors.

The goal of this work is to improve the 
numerical robustness of Q-less CholQR while retaining its effectiveness in operator setting. Our approach is based on \textit{preconditioning}. Rather than applying CholQR directly to $\mtx M$, we apply it to the composite operator
$$
\mtx M^{\rm pre}[\cdot]
=
\mtx M [\mtx P^{\rm inv}[\cdot] ],
$$
where the triangular  operator $\mtx P^{\rm inv}[\cdot]$ is chosen so that $\mtx M^{\rm pre}$ is well-conditioned. This framework allows us to develop Q-less counterparts of established stabilization techniques for CholQR~\cite{FukayaEtAl2014,YamamotoNakatsukasa2015,FukayaEtAl2020,Balabanov2022,FanGuoLin2021,MelnichenkoEtAl2024}.

The main algorithm introduced in this work is CQRRTO (Cholesky QR with Randomization for Tall Operators), a randomized preconditioned Q-less CholQR method designed specifically for the operator setting. CQRRTO computes $\mtx P^{\rm inv}[\cdot]$ from a small randomized sketch of $\mtx M$ providing a well-conditioned $\mtx M^{\rm pre}[\cdot]$ with a high probability. It achieves a linear loss of orthogonality scaling $\mathcal O(u\,\cond(\mtx M))$ throughout the entire numerically meaningful regime $\cond(\mtx M)\lesssim u^{-1}$. At the same time, it requires fewer operator applications than deterministic Q-less stabilization strategies. CQRRTO uses $3n$ operator matvecs, 
compared with $4n$ for Q-less Cholesky QR2 and $6n$ for Q-less shifted Cholesky QR3 (see \cref{tab:stability-summary}).

Each algorithm presented in this work is accompanied by a rigorous finite-precision stability analysis that describes errors in the forward and adjoint operator applications through error-shaping matrices and propagates those errors through the preconditioned algorithms. The resulting guarantees therefore apply to a broad class of implicitly defined operators. 

More broadly, this work contributes to the growing body of research on Randomized Numerical Linear Algebra (RandNLA) by demonstrating a role for randomization beyond its classical use for reducing computational complexity~\cite{Mah-mat-rev_BOOK,DM16_CACM,MurrayEtAl2023}. In particular, randomized sketching can serve as a mechanism for \textit{numerical stabilization}, making computations robust in conditioning regimes where their deterministic counterparts become numerically unreliable.

\subsection{Definitions and notation}
\label{subsec:def_not}
\paragraph{General notation.}
Matrices appear in boldface capital letters and vectors in boldface lowercase letters.
The transpose of a matrix $\mtx{X}$ is $\mtx{X}^{\trans}$, and its elementwise absolute value is $|\mtx{X}|$.
We write $\sigma_{\min}(\mtx{X})$ for the smallest singular value of $\mtx{X}$, $\cond(\mtx{X})$ for its $\ell_2$ condition number, and $\cond_F(\mtx{X}) = \|\mtx{X}\|_F/\sigma_{\min}(\mtx{X})$ for its Frobenius condition number.
The identity matrix is denoted by $\mtx{I}$.

We use $u$ to denote the unit roundoff in floating-point error models and $\mathrm{fl}(\cdot)$ to denote floating-point evaluation; computed quantities carry a hat, as in $\widehat{\mtx{X}}$.

\paragraph{Operator notation.}
Abstract linear operators and their applications are denoted with square brackets: for a linear operator $\mtx{M}$, we write $\mtx{M}[\cdot]$ for the routine that applies it and $\mtx{M}^{\trans}[\cdot]$ for the routine that applies its adjoint, so that $\mtx{M}[\mtx{X}]$ is the computed, finite-precision counterpart of the exact product $\mtx{M}\mtx{X}$. Composite applications are bracketed accordingly, as in $\mtx{M}^{\trans}[\mtx{M}[\mtx{I}]]$. \Cref{sec:impl} describes how the bracketed applications are realized in software.

\subsection{Applications}
\label{subsec:apps}
 
The Q-less QR is useful whenever the downstream computation needs a transform that orthogonalizes $\mtx M$. We give several representative examples.

\paragraph{Tall least-squares problems for fast implicit or generated-on-the-fly operators.}
A first application is the solution of poorly conditioned tall least-squares problems,
\begin{equation}\label{eq:lsq}
\min_{\vct{x}}
\left\|
\mtx M \vct{x}
-
\vct b
\right\|_2,
\end{equation}
where $\mtx M $ and $\mtx M^{\trans}$ can be efficiently applied to vectors and matrices in terms of flops and/or storage. 
Such scenarios arise in nonlinear least-squares and inverse problems, where $\mtx M$ is a Jacobian operator~\cite{NocedalWright2006,Bjorck1996}; in minimum-residual Petrov--Galerkin discretizations of PDEs, where $\mtx M  = \mtx L^{-1}\mtx K \mtx U$ with $\mtx U$ a sparse trial basis and $\mtx L\mtx L^{\trans}$ the test-space Riesz map~\cite{DemkowiczGopalakrishnan2011} 
in random-feature and kernel regression, where rows of $\mtx M$ can be efficiently regenerated~\cite{RahimiRecht2007,AvronClarksonWoodruff2017,ChenChiEYang2022}; and in tall Toeplitz least-squares from deconvolution, autoregression, and system identification, where $\mtx M $ is applied in $\mathcal{O}(m\log m)$ flops by FFT~\cite{ChanNagyPlemmons1993,Ljung1999,ChanNg1996,DeRoeck2002,ChenChenAndersen2026}. 
A Q-less QR factorization that retains only the triangular factor
$\mtx R$ is naturally suited to these cases. The solution then follows from solving the preconditioned system $\mtx R^{-\trans} \mtx M^{\trans} \mtx M \mtx R^{-1} \vct y = \mtx R^{-\trans} \mtx M^{\trans}\vct b$ with a few refinement steps guaranteeing backward 
stability~\cite{RozloznikSmoktunowiczKopal2014,Bjorck1987,EpperlyMeierNakatsukasa2024}. Unlike sketch-and-precondition solvers for RandNLA~\cite{RokhlinTygert2008,Blendenpik2010,LSRN2014}, which consider sketching for constructing coarse preconditioners, CQRRTO returns an accurate, reusable preconditioning factor with $\mtx R^{-\trans} \mtx M^{\trans} \mtx M \mtx R^{-1} - \mtx I$ on the order of $u\cond(\mtx M)$, enabling convergence in a few
iterations. It is most competitive when the sequential operator applications in iterative solvers entail a high communication cost or when the least-squares problem has many right-hand sides.

\paragraph{Reduced spectral approximation of operators.}
A second application is reduced-basis spectral approximation. Given an operator $\mtx C \in \mathbb{R}^{m\times m}$ and an implicit basis
$\mtx M \in \mathbb{R}^{m\times n}$ that approximately spans an eigenspace
of $\mtx C$, the standard Rayleigh--Ritz approach orthonormalizes
$\mtx M$ into $\mtx Q$ and solves
\[
\mtx H\vct u=\theta\vct u,
\qquad
\mtx H=\mtx Q^{\trans}\mtx C\mtx Q,
\]
yielding the Ritz pairs $(\theta,\mtx Q\vct u)$.
Our Q-less QR factorization allows this projection
without forming orthonormal factor $\mtx Q$. Instead, we compute only the factor $\mtx R$, form the reduced matrix
\[
\mtx H=\mtx R^{-\trans}\mtx M^{\trans}\mtx C\mtx M\mtx R^{-1},
\]
and represent the resulting Ritz pairs implicitly as
$(\theta,\mtx M\mtx R^{-1}\vct u)$.

For example, consider inverse subspace iteration for
$\mtx A\vct x=\lambda\mtx B\vct x$, with 
$\mtx B=\mtx L\mtx L^{\trans}$.
To target eigenvalues near a shift $\mu$, we can use
$\mtx M=\mtx C^{-j}\mtx U$, where
$\mtx C^{-1}=\mtx L^{\trans}(\mtx A-\mu\mtx B)^{-1}\mtx L$
and $\mtx U$ is a sparse starting basis, such as a coarse-mesh
space~\cite{XuZhou2001,XuXieZhang2020} or a random sparse matrix. Explicitly forming and orthogonalizing $\mtx M$
can incur substantial storage and computational costs.
Moreover, repeated application of $\mtx C^{-1}$ can make
$\mtx M$ poorly conditioned, motivating our robust,
matrix-free Q-less QR.

\paragraph{Computing orthogonal projections.}
Another direct application is the computation of orthogonal projections
onto an implicit basis $\mtx M$. Given a Q-less
QR factorization $\mtx M=\mtx Q\mtx R$, the orthogonal projector onto
$\range(\mtx M)$ can be applied implicitly via
\[
\mtx\Pi
=
\mtx M\mtx R^{-1}\mtx R^{-\trans}\mtx M^{\trans}
\]
without forming an orthonormal basis.
Its numerical accuracy depends critically
on the
numerical stability of the Q-less QR method.

This construction was used in RandRAND, our recent work on randomized preconditioning for solving large linear systems~\cite{balabanov2025preconditioning}, where $\mtx M$ is taken as an implicit randomized basis.
More broadly, our stabilization mechanisms may be useful for
robustifying RandNLA algorithms that apply low-dimensional randomized
projections implicitly~\cite{woodruff2014sketching}. Such constructions
play a central role in recent RandNLA algorithms; see, e.g.,~\cite{derezinski2025faster,derezinski2026approaching}.

\paragraph{Inversion of implicit normal updates.}
A fourth application is the stable inversion or factorization of a regularized low-rank update
\[
 \mtx \Sigma = \bigl( \mtx B + \mtx V\mtx V^{\trans}\bigr)^{-1},
 \qquad
 \mtx B = \mtx L\mtx L^{\trans},
\]
with $\mtx L$ sparse and cheaply invertible and $\mtx V$ a tall implicit operator. Such updates arise in large-scale Bayesian inverse problems and uncertainty quantification~\cite{iic_TR}, where $\mtx B$ is the prior precision and $\mtx V\mtx V^{\trans}$ the data-misfit Hessian, so that $\mtx \Sigma$ is the posterior covariance~\cite{BuiThanhEtAl2013,Spantini2015}. The Hessian is typically implicit, accessed only through its action, for example, through PDE solves. The same algebraic structure appears in limited-memory second-order optimization, where $\mtx V$ is a curvature update~\cite{ByrdNocedalSchnabel1994,NocedalWright2006}. One needs either to apply $\mtx \Sigma$ or to obtain an implicit factor $\mtx D$ with $\mtx \Sigma=\mtx D^{\trans}\mtx D$, for instance to draw samples $\mtx D^{\trans}\vct\xi$.

By the Woodbury identity, $\mtx \Sigma = \mtx B^{-1} - \mtx B^{-1}\mtx V(\mtx I+\mtx V^{\trans}\mtx B^{-1}\mtx V)^{-1}\mtx V^{\trans}\mtx B^{-1}$, so the task reduces to inverting the capacitance matrix $\mtx I+\mtx V^{\trans}\mtx B^{-1}\mtx V$, whose explicit formation can lead to severe rounding errors proportional to the conditioning of the matrix.
Numerical stability is a well-recognized challenge in computations based on the Woodbury formula; see, e.g., the recent works~\cite{ma2026note,hashemi2025instability}.
Writing $\mtx Y=\mtx L^{-1}\mtx V$, so that $\mtx V^{\trans}\mtx B^{-1}\mtx V = \mtx Y^{\trans}\mtx Y$, the numerical stability can be improved by a numerically robust QR of the augmented operator
\[
{\mtx M}
=
\begin{bmatrix} \mtx Y\\ \mtx I \end{bmatrix},
\qquad
\mtx M^{\trans}\mtx M
=
\mtx I+\mtx Y^{\trans}\mtx Y
=
\mtx R^{\trans}\mtx R.
\]
Since $\mtx Y$ is available only through its action, $\mtx R$ should be computed by a Q-less QR, forming none of $\mtx V$, $\mtx Y$, $\mtx M$, or $\mtx Q$. The inverse capacitance is then $\mtx R^{-1}\mtx R^{-\trans}$, applied by two triangular solves, giving the stabilized matrix-free representation $\mtx \Sigma = \mtx B^{-1} - \mtx B^{-1}\mtx V\,\mtx R^{-1}\mtx R^{-\trans}\,\mtx V^{\trans}\mtx B^{-1}$. 

The factor $\mtx R$ also enables stable matrix-free application of the orthogonal projector onto the complement of $\mathrm{range}(\mtx M)$. A straightforward algebraic construction using this projector yields an implicit square-root factor $\mtx D$ satisfying $\mtx\Sigma=\mtx D^{\trans}\mtx D$.

The same Q-less principle can be exploited in broader contexts, for instance to resolve redundant linear constraints in large models~\cite{ChoEtAl2025}.

\subsection{Summary of contributions}

\paragraph{Prior work.}
Our methods draw on two lines of work: the randomized preconditioning of CholQR, and the stabilization of CholQR by reorthogonalization.

\textit{Randomized preconditioning.}
Randomized (sketched) orthogonalization was introduced by Balabanov and Grigori, first for the Gram--Schmidt process~\cite{BalabanovGrigori2022} and later in a block form~\cite{BalabanovGrigori2025}. The idea of sketching $\mtx{M}$ with a subspace embedding to form a smaller matrix $\mtx{M}^{\mathrm{sk}} = \mtx{S}\mtx{M}$ whose triangular factor $\mtx{R}^{\mathrm{sk}}$ serves as an approximate orthogonalization transform for $\mtx{M}$  was noted in~\cite[Remark~2.10]{BalabanovGrigori2022}, and then used as an explicit subroutine (termed RCholQR) within the randomized block Gram--Schmidt algorithm~\cite{BalabanovGrigori2025}. RCholQR was analyzed in finite precision and considered for preconditioning CholQR by Balabanov in~\cite{Balabanov2022}. This randomized preconditioning principle forms the foundation of the CQRRTO algorithm introduced in this paper. Melnichenko et al.~\cite{MelnichenkoEtAl2024} extended it to column-pivoted QR (CQRRPT), where the sketch is also used to determine the pivot ordering allowing CQRRPT to attain unconditional numerical stability. Fan et al.~\cite{FanGuoLin2021} independently proposed a randomized preconditioned CholQR in a similar spirit, although without the subspace-embedding perspective. Higgins et al.~\cite{HigginsEtAl2025} analyzed a randomized Householder--Cholesky QR, providing a detailed stability analysis valid for embedding distortions across nearly the full admissible range. Garrison and Ipsen~\cite{GarrisonIpsen2024} developed a modular perturbation analysis of the method. All of these algorithms form the orthonormal factor $\mtx{Q}$ explicitly, operate on an explicitly stored matrix, and are analyzed in the normwise sense.

\textit{Reorthogonalization.}
The classical stabilization of CholQR by reorthogonalization can also be viewed as preconditioning. Cholesky QR2 (hereafter CholQR2)~\cite{FukayaEtAl2014,YamamotoNakatsukasa2015} restores $\mathcal{O}(u)$ orthogonality whenever $\cond(\mtx{M}) \lesssim u^{-1/2}$, and shifted Cholesky QR3 (hereafter sCholQR3)~\cite{FukayaEtAl2020} extends the reach to $\cond(\mtx{M}) \lesssim u^{-1}$, explicitly casting its first (shifted) pass as a preconditioning step. Both methods form the intermediate orthonormal factors explicitly. Terao et al.~\cite{TeraoOzakiOgita2020} use related preconditioning with LU factors. Orthogonalization has also been studied in nonstandard (oblique) inner products defined by a weight matrix~\cite{LoweryLangou2014,YamamotoEtAl2016oblique}. When this weight is available in factored form, such an oblique orthogonalization reduces to an $\ell_2$-QR of an implicitly defined operator and can therefore be performed and stabilized by the matrix-free, Q-less methods developed here. The existing analyses instead form the orthonormal factor and treat the weight as an explicitly applied matrix rather than an implicit linear operator.

A related Q-less construction of orthogonal projections onto an implicit random basis appeared in our recent work on randomized preconditioners~\cite{balabanov2025preconditioning}. In particular, Algorithm~3 therein can be viewed as preconditioned Q-less QR. The present work develops this construction into a general method with rigorous guarantees of numerical~stability.

\paragraph{Contributions.}
We summarize the main contributions as follows.

\begin{itemize}
\item \textbf{A fast, robust matrix-free algorithm.}
The central contribution of this work is CQRRTO, an algorithm that uses sketching-based preconditioning to make a single CholQR pass numerically safe for any full-rank operator with $\cond(\mtx{M}) \lesssim u^{-1}$. We develop CQRRTO within a general preconditioning framework for Q-less CholQR that includes the Q-less CholQR2 and sCholQR3 baselines as special cases. CQRRTO extends the range of applicability of both baselines while requiring fewer operator applications. To our knowledge, it is the first robust matrix-free, Q-less orthogonalization algorithm.

\item \textbf{Modular stability analysis.}
We give a numerical stability analysis of preconditioned Q-less CholQR under a general model that, via error-shaping matrices, covers matrix-free operators. The analysis is modular in the choice of preconditioner and yields, as special cases: Q-less CholQR, with a loss of orthogonality $\mathcal{O}(u\,\cond(\mtx{M})^2)$; Q-less CholQR2 and Q-less sCholQR3, which improve this to $\mathcal{O}(u\,\cond(\mtx{M}))$ but require $\cond(\mtx{M}) \lesssim u^{-1/2}$ and $\cond(\mtx{M}) \lesssim u^{-3/4}$, respectively; and CQRRTO, which attains the same $\mathcal{O}(u\,\cond(\mtx{M}))$ while extending the safe range to $\cond(\mtx{M}) \lesssim u^{-1}$.
{To our knowledge, no prior work analyzed the stability of these algorithms in Q-less or matrix-free settings.}

\item \textbf{Implementation and evaluation.}
The Q-less methods are implemented in the open-source RandLAPACK
library~\cite{randlapack} on top of its linear-operator abstraction, so that
CQRRTO applies to any operator that provides forward and adjoint applications.
We validate their numerical stability on synthetic operators and evaluate their performance as preconditioners for least-squares problems on two matrix-free benchmarks. We compare CQRRTO with Q-less variants of CholQR, CholQR2, and sCholQR3, a sketch-and-precondition baseline, and the unpreconditioned solver in terms of orthogonality loss, iteration counts, solution accuracy, and wall-clock time.

\end{itemize}

The numerical guarantees and computational costs 
are summarized in
\cref{tab:stability-summary}. 
Among the stabilized Q-less methods, CQRRTO provides both the widest admissible conditioning range and the lowest computational cost.

\begin{table}[htb]
\caption{
Summary of the Q-less CholQR variants proposed and analyzed in this
paper, for column block size $b$. Here, $T_{\rm mv}$ denotes the cost of
one forward or adjoint operator application to a vector. For CQRRTO, we
assume a sketch dimension $d=\mathcal O(n)$ and a sparse embedding with
$\zeta$ nonzeros per column; typically, $\zeta=4$ or $8$, so that
$\zeta mn\ll nT_{\rm mv}$. The peak-memory bounds assume that tall
intermediates are processed in blocks and never stored explicitly.
The $\mathcal O(\cdot)$ notation in the
stability bounds suppresses factors polynomial in $n$. We here assume
that the normwise perturbations in forward and adjoint applications
to vectors $\vct x$ are bounded by
$\mathcal{O}(u)\,\|\mtx M\|_F\|\vct x\|_2$.
The admissible range denotes the
conditioning regime in which the corresponding sufficient stability
conditions can be~satisfied.
}
\label{tab:stability-summary}
\centering
\footnotesize
\setlength{\tabcolsep}{3pt}

\def\summarytablewidth{\textwidth}

\begin{adjustbox}{max width=\summarytablewidth}
\begin{tabular}{@{}llllll@{}}
\toprule
\shortstack[c]{Q-less QR method}
& \shortstack[c]{Operator\\applications}
& \shortstack[c]{Peak\\memory}
& \shortstack[c]{Complexity}
& $\|\mtx Q^{\trans}\mtx Q-\mtx I\|_F$
& \shortstack[c]{Admissible\\range}
\\
\midrule
CholQR
& $2\lceil n/b\rceil$
& $\mathcal O(mb+n^2)$
& $2nT_{\rm mv}+\mathcal O(n^3)$
& $\mathcal O(u\,\cond(\mtx M)^2)$
& $\cond(\mtx M)\lesssim u^{-1/2}$
\\
\shortstack[l]{CholQR2 (our work)}
& $4\lceil n/b\rceil$
& $\mathcal O(mb+n^2)$
& $4nT_{\rm mv}+\mathcal O(n^3)$
& $\mathcal O(u\,\cond(\mtx M))$
& $\cond(\mtx M)\lesssim u^{-1/2}$
\\
\shortstack[l]{sCholQR3 (our work)}
& $6\lceil n/b\rceil$
& $\mathcal O(mb+n^2)$
& $6nT_{\rm mv}+\mathcal O(n^3)$
& $\mathcal O(u\,\cond(\mtx M))$
& $\cond(\mtx M)\lesssim u^{-3/4}$
\\
\shortstack[l]{CQRRTO (our work)}
& $3\lceil n/b\rceil$
& $\mathcal O(m(b+\zeta)+n^2)$
& $3nT_{\rm mv}+\zeta mn+\mathcal O(n^3)$
& $\mathcal O(u\,\cond(\mtx M))$
& $\cond(\mtx M)\lesssim u^{-1}$
\\
\bottomrule
\end{tabular}
\end{adjustbox}
\end{table}

\subsection{Outline}

\Cref{sec:precond_cholqr} introduces the preconditioning framework for Q-less CholQR and its CholQR2 and sCholQR3 variants.
\Cref{sec:precond_via_sk} develops random sketching-based preconditioning, yielding CQRRTO.
\Cref{sec:theory} analyzes rounding errors in preconditioned operator applications and derives stability bounds for ordinary, preconditioned, and randomized Q-less CholQR.
\Cref{sec:experiments} evaluates stability and downstream accuracy on synthetic problems and least-squares performance on matrix-free finite-element (FEM) and Toeplitz benchmarks.
\Cref{sec:conclusion} gives concluding remarks.
Appendix~\ref{sec:impl} describes the RandLAPACK implementations and compared methods;
Appendix~\ref{sec:ls_set} documents the benchmark configuration and additional results;
and Appendix~\ref{app:proofs} provides the full theoretical results and proofs.

\section{Preconditioned Q-less Cholesky QR} \label{sec:precond_cholqr}

The appeal of CholQR lies in its simplicity and parallelism.
\Cref{alg:cholqr} states the Q-less variant used throughout this paper. The Gram matrix can be assembled blockwise so that the tall intermediate $\mtx M[\mtx I]\in\R^{m\times n}$ is never formed. See \cref{sec:impl} for implementation details.

The fundamental limitation of CholQR is its numerical stability.
Forming the Gram matrix $\mtx{G}=\mtx{M}^{\trans}\mtx{M}$ squares the condition number of the input, $\cond(\mtx{G}) = \cond(\mtx{M})^2$, so the orthogonality loss of the implicitly defined $\mtx{Q} = \mtx{M}\mtx{R}^{-1}$ may be as large as $\mathcal{O}( u \,\cond(\mtx{M})^2)$~\cite{YamamotoNakatsukasa2015}.
When $\cond(\mtx{M}) > u^{-1/2}$ (roughly $10^8$ in double precision), $\mtx{G}$ may lose numerical positive definiteness entirely, causing the Cholesky factorization to~fail.

\begin{algorithm}[htb]
\small \setstretch{1.2}
\caption{Ordinary Q-less CholQR}
\label{alg:cholqr}
\begin{algorithmic}[1]
\Statex \textbf{Input:} linear operator $\mtx{M}$.
\Statex \textbf{Output:} upper-triangular $\mtx{R}$ such that $\mtx{M}\mtx{R}^{-1}$ has orthonormal columns.
\setstretch{1.25}
\Function{CholQR}{$\mtx{M}$}
\vspace{-4pt}
    \State Compute Gram matrix $\mtx{G} = \mtx{M}^{\trans}[\mtx{M}[\mtx I]]$
    \State Obtain Cholesky factorization $\mtx{G} = \mtx{R}^{\trans}\mtx{R}$
    \State \textbf{return} $\mtx{R}$
\EndFunction
\end{algorithmic}
\end{algorithm}

A natural remedy for the high rounding errors of CholQR is to apply it twice~\cite{FukayaEtAl2014}.
After the first pass produces $\mtx{R}_1$ with $\mtx{M}^{\trans}\mtx{M} = \mtx{R}_1^{\trans}\mtx{R}_1$, the preconditioned matrix $\mtx{Q}_1 = \mtx{M}\mtx{R}_1^{-1}$ satisfies $\cond(\mtx{Q}_1) \le 1 + \mathcal{O}(u \,\cond(\mtx{M})^2)$, which is $\mathcal O(1)$ whenever $\cond(\mtx M)\lesssim u^{-1/2}$.
A second CholQR pass on $\mtx{Q}_1$ then produces $\mtx{R}_2$ such that $\mtx{Q}_2 = \mtx{Q}_1\mtx{R}_2^{-1}$ has orthonormal columns to working precision.
This approach was formalized in~\cite{FukayaEtAl2014} as CholQR2 and proved to achieve $\mathcal{O}(u)$ orthogonality loss whenever $\cond(\mtx{M}) \lesssim u^{-1/2}$.
However, CholQR2 from~\cite{FukayaEtAl2014} does not admit an efficient Q-less implementation, because the second pass requires forming the explicit factor $\mtx{Q}_1$.
A Q-less variant follows from the more general preconditioning principle reresented by~\cref{alg:pcholqr}.

\begin{algorithm}[htb]
\small \setstretch{1.2}
\caption{Preconditioned Q-less CholQR}
\label{alg:pcholqr}
\begin{algorithmic}[1]
\Statex \textbf{Input:} linear operator $\mtx{M}$, nonsingular upper-triangular $\mtx{P} \in \R^{n \times n}$.
\Statex \textbf{Output:} upper-triangular $\mtx{R}$ such that $\mtx{M}\mtx{R}^{-1}$ has orthonormal columns.
\setstretch{1.25}
\Function{PCholQR}{$\mtx{M}, \mtx{P}$}
\vspace{-4pt}
    \State Define the inverse-preconditioner operator
    $\mtx{P}^{\mathrm{inv}}[\vct x] \leftarrow \mtx P^{-1}\vct x$
    and
    $\mtx{P}^{\mathrm{inv}\trans}[\vct x] \leftarrow \mtx P^{-\trans}\vct x$
    \label{line:cqrrt-pre}
    \State Define the preconditioned operator
   \Statex \hfill
    $\mtx{M}^{\mathrm{pre}}[\vct x]
    \leftarrow \mtx{M}[\mtx{P}^{\mathrm{inv}}[\vct x]]
    \quad\text{and}\quad
    \mtx{M}^{\mathrm{pre}\trans}[\vct x]
    \leftarrow
    \mtx{P}^{\mathrm{inv}\trans}[\mtx{M}^{\trans}[\vct x]]$
    \hfill\strut
    \State Perform Q-less CholQR on $\mtx{M}^\mathrm{pre}$:
    $\mtx R^\mathrm{chol} \leftarrow \mathtt{CholQR}(\mtx{M}^\mathrm{pre})$
    \label{line:cqrrt-gram}
    \State \textbf{return} $\mtx{R} = \mtx{R}^{\mathrm{chol}} \mtx{P}$
\EndFunction
\end{algorithmic}
\end{algorithm}

In other words, preconditioned CholQR is ordinary CholQR applied to the
redefined operator
$\mtx M^{\mathrm{pre}}=\mtx M\mtx P^{-1}$.
In practice, $\mtx{P}^{\mathrm{inv}}[\cdot]$ and
$\mtx{P}^{\mathrm{invT}}[\cdot]$ can be realized by solving triangular
systems with $\mtx P$ and $\mtx P^\trans$, respectively, at each
application.

\begin{remark}
As an alternative implementation, one may explicitly form
$\mtx{P}^{\mathrm{inv}}\approx \mtx P^{-1}$ and subsequently apply the
stored inverse. In this scenario,  $\mtx{P}^{\mathrm{inv}}$ should be obtained by solving
\[
\mtx P\mtx{P}^{\mathrm{inv}}=\mtx I
\]
columnwise using back substitution. The ordering of this solve is important. A columnwise solve of
$\mtx P\mtx{P}^{\mathrm{inv}}=\mtx I$ controls the residual
$\mtx P\mtx{P}^{\mathrm{inv}}-\mtx I.$
Its effect on the preconditioned operator can be written as
$
\mtx M\mtx P^{-1}-\mtx M\mtx{P}^{\mathrm{inv}}
=
-\mtx M\mtx P^{-1}
\bigl(\mtx P\mtx{P}^{\mathrm{inv}}-\mtx I\bigr).$
Thus, the inversion error enters as a perturbation of the already
preconditioned operator $\mtx M\mtx P^{-1}$, leading to stability
guarantees similar to those for implicit $\mtx{P}^{\mathrm{inv}}$.
\end{remark}

The additional cost of~\cref{alg:pcholqr} relative to~\cref{alg:cholqr} is essentially the cost of computing $\mtx P$. One natural choice for $\mtx P$ is the triangular factor $\mtx R_1$ obtained from a first CholQR pass, as in CholQR2. This yields a Q-less counterpart of CholQR2. More generally, the stability of the method is governed by the quality of $\mtx P$ as a preconditioner for $\mtx M$. We show in \cref{sec:theory} that using $\mtx P$ such that $\cond(\mtx M\mtx P^{-1})=\mathcal O(1)$ reduces the loss of orthogonality to $\mathcal O (u\,\cond(\mtx M))$.

CholQR2 still fails when $\cond(\mtx{M}) \gtrsim u^{-1/2}$, since the first Cholesky factorization may not succeed.
The method proposed in~\cite{FukayaEtAl2020} overcomes this barrier by introducing a diagonal shift: the first iteration factorizes $\mtx{G} + s\mtx{I}$ instead of $\mtx{G}$, where $s = c\,u\,n\,\|\mtx{M}\|_\mathrm{F}^2$ with $c$ chosen large enough to guarantee that the shifted Gram matrix is numerically positive definite.
The shift introduces a controlled perturbation: after the first, shifted pass, the condition number of $\mtx{Q}_1 = \mtx{M}\mtx{R}_1^{-1}$ is reduced to at most $\mathcal{O}(u^{-1/2})$, which is within the safe range for CholQR2. 
\Cref{alg:scholqr3} states the adaptation of the method to the Q-less setting. 

\begin{algorithm}[htb]
\small \setstretch{1.2}
\caption{Q-less sCholQR3}
\label{alg:scholqr3}
\begin{algorithmic}[1]
\Statex \textbf{Input:} linear operator $\mtx{M}$; shift parameter $s \ge 0$.
\Statex \textbf{Output:} upper-triangular $\mtx{R} \in \R^{n \times n}$ such that $\mtx{M}\mtx{R}^{-1}$ has orthonormal columns.
\setstretch{1.25}
\Function{sCholQR3}{$\mtx{M}, s$}
\vspace{-4pt}
    \State Compute Gram matrix $\mtx{G} = \mtx{M}^{\trans}[\mtx{M}[\mtx I]]$
    \State Obtain Cholesky factorization $\mtx{G} + s\mtx{I} = \mtx{R}_1^{\trans}\mtx{R}_1$.
    \For{$i = 2, 3$}
        \State $\mtx R_i \leftarrow \mathtt{PCholQR}{(\mtx{M}, \mtx{R}_{i-1})}$
    \EndFor
    \State \textbf{return} $\mtx{R} = \mtx{R}_3$
\EndFunction
\end{algorithmic}
\end{algorithm}

The resulting Q-less sCholQR3 method, however, essentially triples the cost of a single CholQR pass and does not fully resolve the numerical stability limitations of CholQR. In the Q-less setting, its admissible conditioning range extends only to $\cond(\mtx M)\lesssim u^{-3/4}$.\footnote{In fact, it falls short of the $\cond(\mtx M)\lesssim u^{-1}$ range attained by the sCholQR3 counterpart that explicitly forms the orthogonal factor.}
{This motivates the search for a more effective preconditioning strategy based on RandNLA, one capable of stabilizing Q-less CholQR throughout the entire numerically meaningful conditioning~range.}

\FloatBarrier

\section{Preconditioning via sketching}
\label{sec:precond_via_sk}

This section shows how RandNLA-based random sketching can overcome the fundamental
conditioning barrier of CholQR. Our CQRRTO algorithm, presented in
\cref{alg:cqrrt}, forms a random sketch
\[
\mtx M^{\rm sk}
=
\mtx S\mtx M
\in\mathbb R^{d\times n}, \text{with $d \ll m$,}
\]
It then uses the triangular factor of $\mtx M^{\rm sk}$ as the
preconditioner $\mtx P$ in \cref{alg:pcholqr}.

The effectiveness of random sketching can be quantified through the
classical framework of subspace
embeddings~\cite{Mah-mat-rev_BOOK,DM16_CACM,woodruff2014sketching}.
They were introduced in~\cite{DMM06}, first used in data-oblivious
form in~\cite{Sarlos06,DMMS07_FastL2_NM10}, and subsequently
popularized as a conceptual framework in~\cite{woodruff2014sketching}. 
Specifically, we seek $\mtx S$ that
approximately preserves the $\ell_2$-norms on the column space of $\mtx M$,
in the sense~that
\begin{equation}
\label{eq:embedding}
(1-\varepsilon_1)\|\mtx M\vct x\|_2^2
\le
\|\mtx S\mtx M\vct x\|_2^2
\le
(1+\varepsilon_2)\|\mtx M\vct x\|_2^2,
\qquad
\text{for all }\vct x\in\mathbb R^n,
\end{equation}
for some $\varepsilon_1\in[0,1)$ and $\varepsilon_2\ge0$.

A broad range of data-oblivious randomized constructions can be used for $\mtx S$,
including dense Gaussian or subgaussian matrices, randomized orthogonal
transforms, and sparse embeddings. For example, for any fixed
$\mtx M$ and $\varepsilon,\delta\in(0,1)$, dense subgaussian embeddings
satisfy~\cref{eq:embedding} with
$\varepsilon_1=\varepsilon_2=\varepsilon$ with probability at least
$1-\delta$, provided
\small 
\[
d
=
\mathcal O\!\left(
\varepsilon^{-2}
\bigl(n+\log(1/\delta)\bigr)
\right).
\]
\normalsize 
Sparse embeddings provide analogous guarantees while substantially
reducing the cost of applying $\mtx S$ \cite{MM13_STOC}. For fixed embedding accuracy and
failure probability, \cref{eq:embedding} can be satisfied by sparse
embeddings with sketching dimension $d=\mathcal O(n)$ and
$\mathtt{nnz}(\mtx S)=\mathcal O(m)$, up to logarithmic
factors~\cite{cohen2016nearly,chenakkod2024optimal}.
In practice, very sparse constructions can often be effective.
For example, recent experiments with SparseStack embeddings suggest that
a constant sparsity as small as $4$ nonzeros per row can provide reliable
sketches when $d=2n$~\cite{camano2025faster}. We likewise use sparse constructions in our numerical experiments and the RandLAPACK implementation, specifically short-axis-sparse sketching operators (SASOs)~\cite[\S 2.4.1]{MurrayEtAl2023}, in which every column of $\mtx S$ holds a fixed number $\zeta$ of nonzeros, placed at uniformly random rows with random signs and equal magnitudes.

Let $\mtx{R}^{\mathrm{sk}}$ be the triangular factor of $\mtx{S}\mtx{M}$, and let $\mtx{M}^{\mathrm{pre}} = \mtx{M}(\mtx{R}^{\mathrm{sk}})^{-1}$
be the corresponding preconditioned operator.
Writing $\vct y = \mtx{R}^{\mathrm{sk}}\vct x$ and using $\|\mtx{S}\mtx{M}\vct x\|_2 = \|\vct y\|_2$, the embedding property~\cref{eq:embedding} reads
\small 
\[
\frac{\|\mtx{M}^{\mathrm{pre}}\vct y\|_2^2}{\|\vct y\|_2^2}
=
\frac{\|\mtx{M}\vct x\|_2^2}{\|\mtx{S}\mtx{M}\vct x\|_2^2}
\in
\left[(1+\varepsilon_2)^{-1},\, (1-\varepsilon_1)^{-1}\right],
\]
\normalsize 
so the singular values of $\mtx{M}^{\mathrm{pre}}$ lie in $[(1+\varepsilon_2)^{-1/2},\,(1-\varepsilon_1)^{-1/2}]$ and
\small 
\begin{equation}\label{eq:precond-cond}
\cond(\mtx{M}^{\mathrm{pre}}) \leq \sqrt{\frac{1+\varepsilon_2}{1-\varepsilon_1}}.
\end{equation}
\normalsize
For practical embedding qualities (e.g., $\varepsilon_1 = 0.5$, $\varepsilon_2 = 1$), this gives $\cond(\mtx{M}^{\mathrm{pre}}) \leq 2$, well within the safe range for CholQR, and this holds regardless of $\cond(\mtx{M})$.

\begin{algorithm}[htb]
\small \setstretch{1.2}
\caption{Randomized preconditioned Q-less CholQR (CQRRTO)}
\label{alg:cqrrt}
\begin{algorithmic}[1]
\Statex \textbf{Input:} linear operator $\mtx{M}$, sketching dimension $d \geq n$, sketch density $\zeta$.
\Statex \textbf{Output:} upper-triangular $\mtx{R} \in \R^{n \times n}$ such that $\mtx{M}\mtx{R}^{-1}$ has orthonormal columns.
\setstretch{1.25}
\Function{CQRRTO}{$\mtx{M}, d, \zeta$}
\vspace{-4pt}
    \State Generate a random sparse embedding $\mtx{S} \in \R^{d \times m}$ with $\zeta$ nonzeros per column.
    \State Compute $\mtx{M}^{\mathrm{sk}} = \mtx{S}\mtx{M}[\mtx{I}]$ \label{line:cqrrt-sketch}
    \State Compute $[\sim, \mtx{R}^{\mathrm{sk}}] = \code{qr}(\mtx{M}^\mathrm{sk})$
    \State $\mtx R \leftarrow \mathtt{PCholQR}{(\mtx{M}, \mtx{R}^{\mathrm{sk}})}$
    \State \textbf{return} $\mtx{R}$
\EndFunction
\end{algorithmic}
\end{algorithm}

\begin{remark}[Sketched re-orthogonalization]
We also propose CQRRTO2 (\cref{alg:cqrrt2}), which adds a sketched
re-orthogonalization step to improve robustness for ill-conditioned
operators even more. It recomputes $\mtx R^{\rm sk}$ from a fresh application
of the preconditioned operator, either reusing the embedding or
resampling it independently. This additional pass increases the
operator application cost from $3nT_{\rm mv}$ to $4nT_{\rm mv}$.

\end{remark}

\begin{algorithm}[htb]
\small \setstretch{1.2}
\caption{CQRRTO with sketched re-orthogonalization (CQRRTO2)}
\label{alg:cqrrt2}
\begin{algorithmic}[1]
\Statex \textbf{Input:} linear operator $\mtx M$, sketching dimension $d\geq n$, sketch density $\zeta$.
\Statex \textbf{Output:} upper-triangular factor
$\mtx R\in\R^{n\times n}$.
\Function{CQRRTO2}{$\mtx M,d,\zeta$}
    \State Generate a random sparse embedding
        $\mtx S\in\R^{d\times m}$ with $\zeta$ nonzeros per column.
    \State $\mtx{M}^\mathrm{sk} \leftarrow \mtx S\mtx M[\mtx I]$, $[\sim,\mtx R^{\rm sk}]
        \leftarrow\code{qr}(\mtx{M}^\mathrm{sk})$.
    \State $\mtx P\leftarrow\mtx R^{\rm sk}$. Define $\mtx M^{\rm pre}[\mtx x]
        =\mtx M[\mtx P^{-1}\mtx x]$
        using triangular solves.
    \State (Optional) Resample $\mtx S$.
    \State $\mtx{M}^\mathrm{sk}           \leftarrow \mtx S\mtx M^{\rm pre}[\mtx I]$, $[\sim,\mtx R^{\rm sk}]
        \leftarrow\code{qr}(\mtx{M}^\mathrm{sk})$, $\mtx R^{\rm sk} \leftarrow \mtx R^{\rm sk} \mtx P$.
    \State $\mtx R\leftarrow
        \mathtt{PCholQR}(\mtx M, \mtx R^{\rm sk})$.
    \State \textbf{return} $\mtx R$.
\EndFunction
\end{algorithmic}
\end{algorithm}

\FloatBarrier

\section{Numerical stability analysis}
\label{sec:theory}

This section analyzes the loss of orthogonality in each Q-less QR
algorithm under finite-precision arithmetic. The analysis is built around
a simple observation: preconditioned Q-less CholQR in
\cref{alg:pcholqr} can be viewed as ordinary Q-less CholQR in
\cref{alg:cholqr} applied to the redefined operator
$\mtx M^{\rm pre}[\cdot]
:=
\mtx M\bigl[\mtx P^{\rm inv}[\cdot]\bigr].$
It therefore suffices to establish a single stability result for
CholQR under a sufficiently general model for finite-precision
operator applications.

To this end, \cref{subsec:operator_model} models rounding errors in the
forward and adjoint applications of an operator $\mtx M$ through input
and output error-shaping matrices, and shows how these shaping
matrices propagate under operator composition. This framework
preserves the structure and directional dependence of rounding errors,
which can lead to substantially sharper bounds than scalar normwise
majorants. 
\Cref{thm:cholqr-stability_first_order,thm:cholqr-stability} then establish the stability of
CholQR directly in terms of these error-shaping operators.
The subsequent result for preconditioned CholQR then follows immediately by applying these theorems to an
appropriate redefined operator.

For clarity, we here present the stability results with suppressed higher-order dependence on $u$. Complete versions of the key results, including all higher-order terms and tighter constants, together with their proofs, are given in~\cref{app:proofs}.

We use the standard componentwise floating-point arithmetic model~\cite{Higham2002}.
With $u$ the unit roundoff, we write
\small 
\[
\gamma_k:=\frac{ku}{1-ku},
\qquad
ku<1,
\]
\normalsize 
for the worst-case accumulation factor of an inner product of length $k$.
The analysis applies verbatim under any arithmetic model in which $\gamma_k$
is a valid error growth parameter. Under probabilistic rounding models,
for instance, $\gamma_k$ may be replaced by
$c\sqrt{k}\,u/(1-c\sqrt{k}\,u)$ for a suitable constant
$c>0$~\cite{HighamMary2019}.

\subsection{Rounding errors in operator applications}
\label{subsec:operator_model}

We characterize rounding errors in applications of $\mtx M$ using input and
output error-shaping operators, as defined below.

\begin{definition}
\label{def:operator-model}
A linear operator $\mtx M\in\R^{m\times n}$ satisfies the
\emph{operator rounding model} with error-shaping operators
$\mtx \Delta_{\rm out}^{M}$, $\mtx \Delta_{\rm in}^{M}$,
$\mtx \Delta_{\rm out}^{M^*}$, and $\mtx \Delta_{\rm in}^{M^*}$
if its forward and adjoint routines obey
\begin{subequations}
\begin{align}
&\mtx M[\mtx X]
=
\mtx M\mtx X
+
\mtx \Delta_{\rm out}^{M}\mtx E,
~
\|\mtx E\|_F
\le
u\| \mtx \Delta_{\rm in}^{M}\mtx X\|_F, \text{ and }
\label{eq:comp-matvec-M}
\\
&\mtx M^{\trans}[\mtx X]
=
\mtx M^{\trans}\mtx X
+
\mtx \Delta_{\rm out}^{M^*}\mtx E^*,
~
\|\mtx E^*\|_F
\le
u \|\mtx \Delta_{\rm in}^{M^*}\mtx X\|_F.
\label{eq:comp-matvec-MT}
\end{align}
\end{subequations}
\end{definition}
Here, $\mtx \Delta_{\rm in}^{M}$ and
$\mtx \Delta_{\rm in}^{M^*}$ characterize the dependence of the
rounding-error magnitude on the input, whereas
$\mtx \Delta_{\rm out}^{M}$ and
$\mtx \Delta_{\rm out}^{M^*}$ characterize how these errors are
propagated in the corresponding output spaces.

The examples below supply tools for determining error-shaping operators for  specific scenarios where the operators are black-box, explicit matrices, composite and/or involve solutions of linear systems.

\begin{example}[Black-box operator]
\label{ex:blackbox-operator}
For interpretability, we specialize some of our stability results to a
generic normwise black-box model. Specifically, suppose that the forward
and adjoint applications of $\mtx M$ satisfy, for all
$\vct x\in\R^n$ and $\vct y\in\R^m$,
\small 
\begin{equation}
\label{eq:blackbox-operator}
\|\mtx M[\vct x]-\mtx M\vct x\|_2
\le
Cu\,\|\mtx M\|_F\|\vct x\|_2,
\|\mtx M^\trans[\vct y]-\mtx M^\trans\vct y\|_2
\le
Cu\,\|\mtx M\|_F\|\vct y\|_2,
\end{equation}
\normalsize 
for some $C\ge0$.
This model satisfies \cref{def:operator-model}, for instance, with
the error-shaping operators $\mtx\Delta_{\rm out}^{M}=\mtx I_m,
\mtx\Delta_{\rm in}^{M}
=
C\|\mtx M\|_F\mtx I_n$, 
$\mtx\Delta_{\rm out}^{M^*}=\mtx I_n,
\mtx\Delta_{\rm in}^{M^*}
=
C\|\mtx M\|_F\mtx I_m.$
The black-box model is useful for exposing the overall dependence of
the stability bounds on the accuracy of the operator applications.
However, unlike the general error-shaping model, it does not capture
the interaction between the structure of the rounding errors and the
geometry of $\mtx M$. Consequently, the resulting dependence on $C$
can be pessimistic.
\end{example}
\begin{example}[Explicit matrix]
\label{ex:explicit-operator}
An explicit matrix $\mtx M\in\R^{m\times n}$ satisfies
\cref{def:operator-model}, for instance, with the normwise diagonal
error-shaping operators
\small 
\[
u\mtx\Delta_{\rm out}^{M}
=
\gamma_n \mtx I_m,
\qquad
u\mtx\Delta_{\rm in}^{M}
= 
u\sqrt n\,
\diag\!\bigl(
\|\vct m_1\|_2,\ldots,\|\vct m_n\|_2
\bigr),
\]
\normalsize 
and
\small 
\[
u\mtx\Delta_{\rm out}^{M^*}
=
u\diag\!\bigl(
\|\vct m_1\|_2,\ldots,\|\vct m_n\|_2
\bigr),
\qquad
u\mtx\Delta_{\rm in}^{M^*}
=
\sqrt n\,\gamma_m \mtx I_m,
\]
\normalsize 
where $\vct m_j$ denotes the $j$th column of $\mtx M$.
These choices follow directly from the standard componentwise
matrix-product error bounds.
\end{example}

\begin{example}[Inverse operator]
\label{lem:inverse-majorant}
Let $\mtx N\in\mathbb R^{n\times n}$ be nonsingular, and consider an
implementation of the inverse operator $\mtx N^{\rm inv}[\cdot]$ by a
backward-stable linear solver. Suppose that, for every right-hand side,
\[
\mtx N^{\rm inv}[\vct x]
=
(\mtx N+\delta\mtx N)^{-1}\vct x,
\qquad
|\delta\mtx N|
\le
u\delta_N|\mtx N|.
\]
Let $\mtx D_{\rm out}$ and $\mtx D_{\rm in}$ be arbitrary positive
diagonal matrices and suppose that
\[
u\delta_N
\left\|
\mtx D_{\rm out}^{-1}
|\mtx N|
\mtx D_{\rm in}^{-1}
\right\|_2
\left\|
\mtx D_{\rm in}\mtx N^{-1}\mtx D_{\rm out}
\right\|_2
<1.
\]
Then $\mtx N^{\rm inv}$ satisfies \cref{def:operator-model}. In
particular, valid error-shaping operators for the forward action are
\small 
\begin{equation}
\label{eq:inverse-majorant}
\mtx\Delta_{\rm out}^{N^{\rm inv}}
=
\mtx N^{-1}\mtx D_{\rm out},
\qquad
\mtx\Delta_{\rm in}^{N^{\rm inv}}
=
\frac{
\delta_N
\left\|
\mtx D_{\rm out}^{-1}
|\mtx N|
\mtx D_{\rm in}^{-1}
\right\|_2
}{
1-
u\delta_N
\left\|
\mtx D_{\rm out}^{-1}
|\mtx N|
\mtx D_{\rm in}^{-1}
\right\|_2
\left\|
\mtx D_{\rm in}\mtx N^{-1}\mtx D_{\rm out}
\right\|_2
}
\mtx D_{\rm in}\mtx N^{-1}.
\end{equation}
\normalsize
The diagonal matrices $\mtx D_{\rm out}$ and $\mtx D_{\rm in}$ may be
chosen as $\mtx I$.  
Alternatively, to sharpen the resulting error bounds, for example, they may be
selected to approximately equilibrate $|\mtx N|$. A similar
consideration applies to the adjoint inverse.
\end{example}

\begin{example}[Composition]
\label{lem:composition}
Let
\[
\mtx M[\cdot]
=
\mtx M_1[\mtx M_2[\cdot]],
\]
where each $\mtx M_i$ satisfies \cref{def:operator-model}.
Then $\mtx M$ also satisfies \cref{def:operator-model}. In particular,
for arbitrary scalars $\alpha_1,\alpha_2>0$, valid output/input
error-shaping operators for the forward action are
\small 
\begin{equation}
\label{eq:composite-forward-out}
{
\mtx\Delta_{\rm out}^{M}
=
\begin{bmatrix}
\alpha_1\mtx\Delta_{\rm out}^{M_1}
&
\alpha_2\mtx M_1\mtx\Delta_{\rm out}^{M_2}
\end{bmatrix},
\quad 
\mtx\Delta_{\rm in}^{M}
=
\begin{bmatrix}
\alpha_1^{-1}\mtx\Delta_{\rm in}^{M_1}\mtx M_2
\\[1mm]
\alpha_2^{-1}\mtx\Delta_{\rm in}^{M_2}
\end{bmatrix}
+
\mathcal O(u).
}
\end{equation}
\normalsize

Similarly, for arbitrary scalars $\alpha_1^*,\alpha_2^*>0$, valid
output/input error-shaping operators are
\small 
\begin{equation}
\label{eq:composite-adjoint-out}
{
\mtx\Delta_{\rm out}^{M^*}
=
\begin{bmatrix}
\alpha_1^*\mtx M_2^\trans
\mtx\Delta_{\rm out}^{M_1^*}
&
\alpha_2^*\mtx\Delta_{\rm out}^{M_2^*}
\end{bmatrix}, \quad \mtx\Delta_{\rm in}^{M^*}
=
\begin{bmatrix}
{\alpha_1^*}^{-1}\mtx\Delta_{\rm in}^{M_1^*}
\\[1mm]
{\alpha_2^*}^{-1}
\mtx\Delta_{\rm in}^{M_2^*}\mtx M_1^\trans
\end{bmatrix}
+
\mathcal{O}(u).
}
\end{equation}
\normalsize 
The free scaling parameters may be chosen to balance the input and
output error-shaping blocks.
\end{example}
The results from~\cref{ex:blackbox-operator,ex:explicit-operator,lem:inverse-majorant,lem:composition}  are particularly useful for tracking the propagation of local rounding errors through the preconditioned operator. They can also be combined or applied recursively to analyze more complex operator implementations, such as those considered in our numerical experiments.

\subsection{The preconditioned operator}
\label{subsec:preconditioned-operator}

Consider the composite preconditioned operator
$\mtx M^{\rm pre}=\mtx M\mtx P^{-1}$ from \cref{alg:pcholqr},
implemented as
\[
\mtx M^{\rm pre}[\cdot]
=
\mtx M[\mtx P^{\rm inv}[\cdot]],
\qquad
(\mtx M^{\rm pre})^\trans[\cdot]
=
\mtx P^{\rm invT}[\mtx M^\trans[\cdot]].
\]
In the default implementation of \cref{sec:precond_cholqr}, the action
of $\mtx P^{-1}$ is computed by solving a triangular system with
$\mtx P$ at each application. We model this computation by the
following assumption, attained with standard triangular solvers~\cite{Higham2002}.

\begin{assumption}[Inverse preconditioner]
\label{as:inverse-preconditioner}
Let $\mtx P\in\R^{n\times n}$ be nonsingular and upper triangular.
For each application, the computed inverse action satisfies
\[
\mtx P^{\rm inv}[\vct x]
=
(\mtx P+\delta\mtx P)^{-1}\vct x,
\qquad
|\delta\mtx P|
\le
\gamma_n|\mtx P|,
\]
where $\delta\mtx P$ may vary between applications. The adjoint action
$\mtx P^{\rm invT}[\cdot]$ satisfies the analogous backward-stability condition.
\end{assumption}

\begin{proposition}[Forward error shaping for the preconditioned operator]
\label{prop:preconditioned-forward-shapers}
Suppose that $\mtx M$ satisfies \cref{def:operator-model} and that
\cref{as:inverse-preconditioner} holds. If
$\gamma_n\cond_F(\mtx P)\le \frac12,$
then, up to terms of order $u^2$, valid error-shaping operators for the
forward realization
\[
\mtx M^{\rm pre}[\mtx X]
:=
\mtx M\bigl[\mtx P^{\rm inv}[\mtx X]\bigr]
\]
are
\small
\begin{equation}
\label{eq:trsm-preconditioned-shapers}
{
\mtx\Delta_{\rm out}^{M^{\rm pre}}
=
\begin{bmatrix}
\mtx M^{\rm pre}
&
\mtx\Delta_{\rm out}^{M}
\end{bmatrix},
~
u\mtx\Delta_{\rm in}^{M^{\rm pre}}
=
\begin{bmatrix}
2\gamma_n
\min\left \{\cond_F(\mtx P), \||\mtx P|\,|\mtx P^{-1}|\|_2 \right \} \mtx I
\\[1mm]
u\mtx\Delta_{\rm in}^{M}\mtx P^{-1}
\end{bmatrix}
+
\mathcal O(u^2).
}
\end{equation}
\normalsize 
The two blocks in~\cref{eq:trsm-preconditioned-shapers} correspond,
respectively, to applying the inverse preconditioner and applying
$\mtx M$.  The adjoint error-shaping operators follow analogously.
\end{proposition}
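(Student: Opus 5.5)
The plan is to treat $\mtx M^{\rm pre}[\cdot]=\mtx M[\mtx P^{\rm inv}[\cdot]]$ as a composition and invoke \cref{lem:composition} with $\mtx M_1=\mtx M$ and $\mtx M_2=\mtx P^{-1}$. The only new ingredient is a set of error-shaping operators for the inverse preconditioner $\mtx P^{\rm inv}$ satisfying \cref{def:operator-model}. Once these are available, the output block $\alpha_1\mtx\Delta_{\rm out}^{M}$ and the input block $\alpha_1^{-1}\mtx\Delta_{\rm in}^{M}\mtx P^{-1}$ are read off directly; I would take $\alpha_1=\alpha_2=1$. The whole task therefore reduces to showing that $\mtx P^{\rm inv}$ admits
\[
\mtx\Delta_{\rm out}^{P^{\rm inv}}=\mtx P^{-1},
\qquad
u\mtx\Delta_{\rm in}^{P^{\rm inv}}=2\gamma_n\min\{\cond_F(\mtx P),\||\mtx P||\mtx P^{-1}|\|_2\}\,\mtx I.
\]
With these, the composed output block becomes $\mtx M_1\mtx\Delta_{\rm out}^{M_2}=\mtx M\mtx P^{-1}=\mtx M^{\rm pre}$, which matches \cref{eq:trsm-preconditioned-shapers}.

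For the inverse preconditioner, I would write the error from \cref{as:inverse-preconditioner} exactly:
\[
(\mtx P+\delta\mtx P)^{-1}\mtx X-\mtx P^{-1}\mtx X=-\mtx P^{-1}\,\delta\mtx P\,(\mtx P+\delta\mtx P)^{-1}\mtx X .
\]
I would set $\mtx E=-\delta\mtx P(\mtx P+\delta\mtx P)^{-1}\mtx X=-\delta\mtx P\mtx P^{-1}(\mtx I+\delta\mtx P\mtx P^{-1})^{-1}\mtx X$, so the output shaper is $\mtx P^{-1}$. When $\mtx X$ has several columns, $\delta\mtx P$ may differ column by column, so the argument is carried out per column and the squared norms are summed to obtain the Frobenius bound.

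Two bounds on $\|\delta\mtx P\mtx P^{-1}\|_2$ give the minimum. First, $\||\delta\mtx P||\mtx P^{-1}|\|_2\le\gamma_n\||\mtx P||\mtx P^{-1}|\|_2$. Second, $\|\delta\mtx P\|_2\|\mtx P^{-1}\|_2\le\|\delta\mtx P\|_F\|\mtx P^{-1}\|_2\le\gamma_n\|\mtx P\|_F/\sigma_{\min}(\mtx P)=\gamma_n\cond_F(\mtx P)$. The hypothesis $\gamma_n\cond_F(\mtx P)\le\frac12$ gives $\|(\mtx I+\delta\mtx P\mtx P^{-1})^{-1}\|_2\le2$ by a Neumann series argument, which accounts for the factor $2$. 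This yields $\|\mtx E\|_F\le 2\gamma_n\min\{\cdot\}\|\mtx X\|_F$. Alternatively, the factor is $1+\mathcal O(u)$ if everything is expanded to first order, which the statement permits anyway.

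The main obstacle I expect is the $\mathcal O(u)$ cross-term that \cref{lem:composition} suppresses. The input to $\mtx M[\cdot]$ is the computed vector $\mtx P^{\rm inv}[\mtx X]$, not $\mtx P^{-1}\mtx X$, so $\mtx\Delta_{\rm in}^{M}$ acts on a perturbed argument. I would handle this by writing
\[
\mtx\Delta_{\rm in}^{M}(\mtx P+\delta\mtx P)^{-1}\mtx X=\mtx\Delta_{\rm in}^{M}\mtx P^{-1}\mtx X+\mathcal O(u)\|\mtx\Delta_{\rm in}^{M}\mtx P^{-1}\|\,\|\mtx X\|.
\]
After multiplication by $u$, this contributes only $\mathcal O(u^2)$, using the same bound of $2$ on the Neumann factor. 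Combining the two errors into a stacked $\mtx E$ with the block shapers, as in the proof of \cref{lem:composition}, completes the argument. The adjoint case follows symmetrically with $\mtx P^{-\trans}$.
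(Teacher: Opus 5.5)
Your proposal is correct and is essentially the paper's proof. The paper likewise writes $\mtx P^{\rm inv}[\mtx X]=\mtx P^{-1}(\mtx X+\mtx E_P)$ with $\mtx E_P=-\delta\mtx P\,\mtx P^{\rm inv}[\mtx X]$, and uses $\gamma_n\cond_F(\mtx P)\le\frac12$ to bound $\|\mtx E_P\|_F\le 2\gamma_n\min\{\cond_F(\mtx P),\||\mtx P|\,|\mtx P^{-1}|\|_2\}\|\mtx X\|_F$. It then absorbs the perturbed-input term $u\mtx\Delta_{\rm in}^{M}\mtx P^{-1}\mtx E_P$ into $\mathcal O(u^2)$ and stacks the two errors. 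Routing this through the composition example (which permutes the blocks), and bounding $(\mtx I+\delta\mtx P\,\mtx P^{-1})^{-1}$ by a Neumann series rather than the paper's implicit inequality, are only cosmetic differences.
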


\begin{proof}
See~\cref{app:preconditioned-forward-proof}
\end{proof}

\begin{remark}[Explicit inverse preconditioner]
\label{rem:explicit-vs-trsm}
As a secondary implementation, one may form
$\mtx P^{\rm inv}$ explicitly and subsequently apply the stored inverse.
We consider forming $\mtx P^{\rm inv}$ by solving
$\mtx P\mtx P^{\rm inv}=\mtx I$
columnwise using a backward-stable triangular solver. Thus, for each
column $\vct e_j$ of the identity matrix, there exists a matrix
$\delta\mtx P_j$ such~that
\[
(\mtx P+\delta\mtx P_j)
\mtx P^{\rm inv}\vct e_j
=
\vct e_j,
\qquad
|\delta\mtx P_j|
\le
\gamma_n|\mtx P|.
\]
If $
\gamma_n
\bigl\||\mtx P|\,|\mtx P^{-1}|\bigr\|_2
\le \frac12,$
then, up to terms of order $u^2$, valid error-shaping operators for the
forward realization using the explicitly-stored inverse are
\small 
\begin{equation}
\label{eq:preconditioned-forward-shapers}
{
\mtx\Delta_{\rm out}^{M^{\rm pre}}
=
\begin{bmatrix}
\mtx M^{\rm pre}
&
\mtx M^{\rm pre}
&
\mtx\Delta_{\rm out}^{M}
\end{bmatrix},
\qquad
u\mtx\Delta_{\rm in}^{M^{\rm pre}}
=
\begin{bmatrix}
2\gamma_n
\bigl\||\mtx P|\,|\mtx P^{-1}|\bigr\|_2\mtx I
\\[2mm]
2\gamma_n
\bigl\||\mtx P|\,|\mtx P^{-1}|\bigr\|_2\mtx I
\\[2mm]
u\mtx\Delta_{\rm in}^{M}
\mtx P^{-1}
\end{bmatrix}
+
\mathcal O(u^2).
}
\end{equation}
\normalsize 
The three blocks correspond, respectively, to forming the inverse
preconditioner, applying the explicitly-stored inverse, and applying
$\mtx M$.
\end{remark}

In both implementations, rounding errors associated with the inverse
preconditioner are propagated through the composite operator
$\mtx M^{\rm pre}=\mtx M\mtx P^{-1}$, leading to the same qualitative
stability behavior.

\subsection{Stability of Q-less Cholesky QR}
\label{subsec:main_thm}

\begin{assumption}[Roundoff regime]
\label{as:roundoff}
Throughout \cref{sec:theory}, we assume
\begin{equation}
\label{eq:standing-roundoff}
0\le \gamma_n\le 0.2\,\gamma_m\le 0.002 .
\end{equation}

We further assume that the Cholesky factorization applied to a matrix $\widehat{\mtx G}$ first symmetrizes it with no additional rounding error.
With a slight abuse of notation, we henceforth denote this symmetrized matrix again by
$\widehat{\mtx G}$. We assume that the Cholesky factorization of
$\widehat{\mtx G}$ succeeds and satisfies
\begin{equation}
\label{eq:comp-chol-error}
(\widehat{\mtx R}^{\rm chol})^{\trans}
\widehat{\mtx R}^{\rm chol}
=
\widehat{\mtx G}+\mtx E^{\rm chol},
\qquad
|\mtx E^{\rm chol}|
\le
\gamma_{n+1}
|\widehat{\mtx R}^{\rm chol}|^{\trans}
|\widehat{\mtx R}^{\rm chol}|,
\end{equation}
whenever
\[
1.01n\,\cond(\widehat{\mtx G})
\le 1/\gamma_{n+1},
\]
which is a standard assumption; see, e.g., \cite{FukayaEtAl2020}.
\end{assumption}

The following theorem provides the basic stability result underlying
the analysis of all Q-less CholQR variants considered in this work.
The bound is expressed directly in terms of the input and output
error-shaping operators of $\mtx M$, thereby retaining the structure
of its particular implementation.  The complete result
including explicit higher-order terms in the operator-error contribution $u_M$, is given in~\cref{app:proofs_cholqr}.

\begin{theorem}[Stability of Q-less CholQR (first-order form)]
\label{thm:cholqr-stability_first_order}
Let $\mtx M\in\R^{m\times n}$ have full column rank and suppose that it
satisfies \cref{def:operator-model}. Let
\[
\mtx M=\mtx Q\mtx R
\]
denote its exact QR factorization. Define
\begin{equation}
\label{eq:cholqr-operator-error}
\begin{aligned}
u_M
:={}
u
\frac{\|\mtx Q^\trans\mtx\Delta_{\rm out}^{M}\|_2}
     {\|\mtx M\|_F}
\|\mtx\Delta_{\rm in}^{M}\|_F
+
u
\|\mtx R^{-\trans}\mtx\Delta_{\rm out}^{M^*}\|_2\|\mtx\Delta_{\rm in}^{M^*}\mtx Q\|_2
+ \mathcal{O}(u^2).
\end{aligned}
\end{equation}
Consider \cref{alg:cholqr} and set
\[
\widehat{\mtx Q}
:=
\mtx M\widehat{\mtx R}^{-1}.
\]
If, in addition to the roundoff regime described by
\cref{as:roundoff},
\begin{equation}
\label{eq:cholqr-smallness}
1.05\,u_M\cond_F(\mtx M)
+
1.5\,\gamma_{n+1}\cond_F(\mtx M)^2
\le0.2, \quad 
1.6\,n
\gamma_{n+1}\cond_F(\mtx M)^2
\le1,
\end{equation}
then the Cholesky factorization succeeds, $\widehat{\mtx R}$ is nonsingular and
\begin{equation}
\label{eq:cholqr-Q-bound}
\boxed{
\left\|
\widehat{\mtx Q}^{\trans}\widehat{\mtx Q}-\mtx I
\right\|_F
\le
1.25\,u_M\cond_F(\mtx M)
+
1.75\,\gamma_{n+1}\cond_F(\mtx M)^2 .
}
\end{equation}
\end{theorem}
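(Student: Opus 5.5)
The plan is to reduce everything to a perturbation of the Gram matrix in the orthonormal coordinates given by the exact $\mtx R$. Concretely, I write the computed Gram matrix as $\widehat{\mtx G}=\mtx M^\trans\mtx M+\mtx E^{G}$, where $\mtx E^{G}$ collects the forward and adjoint errors. By \cref{def:operator-model}, $\widehat{\mtx Y}:=\mtx M[\mtx I]=\mtx M+\mtx\Delta_{\rm out}^{M}\mtx E$ with $\|\mtx E\|_F\le u\|\mtx\Delta_{\rm in}^{M}\|_F$. Then
\[
\mtx M^\trans[\widehat{\mtx Y}]=\mtx M^\trans\widehat{\mtx Y}+\mtx\Delta_{\rm out}^{M^*}\mtx E^*,
\qquad
\|\mtx E^*\|_F\le u\|\mtx\Delta_{\rm in}^{M^*}\widehat{\mtx Y}\|_F .
\]
After symmetrization, the first-order part is
\[
\mtx E^{G}=\mtx M^\trans\mtx\Delta_{\rm out}^{M}\mtx E+\mtx\Delta_{\rm out}^{M^*}\mtx E^*+\mathcal O(u^2).
\]
Conjugating by $\mtx R^{-\trans}(\cdot)\mtx R^{-1}$ and using $\mtx M^\trans=\mtx R^\trans\mtx Q^\trans$ gives
\[
\mtx R^{-\trans}\mtx E^{G}\mtx R^{-1}
=\mtx Q^\trans\mtx\Delta_{\rm out}^{M}\mtx E\mtx R^{-1}
+\mtx R^{-\trans}\mtx\Delta_{\rm out}^{M^*}\mtx E^*\mtx R^{-1}.
\]
I bound the Frobenius norm of each term by $\|\mtx R^{-1}\|_2=1/\sigma_{\min}(\mtx M)$ times the factors appearing in $u_M$. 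For the first term, $\|\mtx E\|_F\le u\|\mtx\Delta_{\rm in}^M\|_F$. For the second, I write $\widehat{\mtx Y}\approx \mtx Q\mtx R$, so that $\|\mtx\Delta_{\rm in}^{M^*}\widehat{\mtx Y}\|_F\le\|\mtx\Delta_{\rm in}^{M^*}\mtx Q\|_2\|\mtx R\|_F$ and $\|\mtx R\|_F=\|\mtx M\|_F$. Multiplying through by the normalizing $\|\mtx M\|_F$ factors, this gives
\[
\|\mtx R^{-\trans}\mtx E^{G}\mtx R^{-1}\|_F\le u_M\cond_F(\mtx M),
\]
with the $\mathcal O(u^2)$ cross terms absorbed into $u_M$ and the constant $1.05$.

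Next I handle the Cholesky error. By \cref{as:roundoff}, $\widehat{\mtx R}^\trans\widehat{\mtx R}=\mtx M^\trans\mtx M+\mtx E^{G}+\mtx E^{\rm chol}$. I also need the hypothesis $1.01n\cond(\widehat{\mtx G})\le1/\gamma_{n+1}$ to hold. To get it, I use the first condition in \eqref{eq:cholqr-smallness} to show that $\|\mtx R^{-\trans}\mtx E^{G}\mtx R^{-1}\|_2\le 0.2$. Weyl's inequality then gives $\cond(\widehat{\mtx G})\le 1.5\cond(\mtx M)^2$, and the second condition in \eqref{eq:cholqr-smallness} finishes the check, since $\cond\le\cond_F$. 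Then
\[
|\mtx E^{\rm chol}|\le\gamma_{n+1}|\widehat{\mtx R}|^\trans|\widehat{\mtx R}|,
\]
and $\||\widehat{\mtx R}|\|_F=\|\widehat{\mtx R}\|_F$, with $\|\widehat{\mtx R}\|_F^2=\trace(\widehat{\mtx G}+\mtx E^{\rm chol})\le(1+\mathcal O(\cdot))\|\mtx M\|_F^2$. This yields
\[
\|\mtx R^{-\trans}\mtx E^{\rm chol}\mtx R^{-1}\|_F\le\gamma_{n+1}\|\widehat{\mtx R}\|_F^2/\sigma_{\min}(\mtx M)^2\lesssim 1.05\gamma_{n+1}\cond_F(\mtx M)^2 .
\]
The same argument shows that $\widehat{\mtx R}$ is nonsingular, because the perturbation $\mtx F$ defined below has norm below one.

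Finally, I set $\mtx F:=\mtx R^{-\trans}(\mtx E^{G}+\mtx E^{\rm chol})\mtx R^{-1}$, so that $\mtx R^{-\trans}\widehat{\mtx R}^\trans\widehat{\mtx R}\mtx R^{-1}=\mtx I+\mtx F$. Writing $\mtx Z:=\widehat{\mtx R}\mtx R^{-1}$, we have $\widehat{\mtx Q}=\mtx Q\mtx Z^{-1}$, hence
\[
\widehat{\mtx Q}^\trans\widehat{\mtx Q}=\mtx Z^{-\trans}\mtx Z^{-1}=(\mtx Z\mtx Z^\trans)^{-1}.
\]
Since $\mtx Z^\trans\mtx Z=\mtx I+\mtx F$ and $\mtx Z\mtx Z^\trans$ is similar to it, both have the same eigenvalues. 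It follows that
\[
\|\widehat{\mtx Q}^\trans\widehat{\mtx Q}-\mtx I\|_F\le\|\mtx F\|_F/(1-\|\mtx F\|_2)\le 1.25\|\mtx F\|_F,
\]
using $\|\mtx F\|_2\le0.2$. Combined with the previous bounds, this gives \eqref{eq:cholqr-Q-bound}.

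I expect the main obstacle to be the bookkeeping needed to control $\|\widehat{\mtx R}\|_F$ and the second-order terms so that they are absorbed into the stated constants. In particular, the dependence of $\mtx E^*$ on $\widehat{\mtx Y}$ rather than on $\mtx M$, and the circularity in which the Cholesky error depends on $\widehat{\mtx R}$ itself, must both be resolved, and I would attempt to close these using the smallness conditions \eqref{eq:cholqr-smallness}.
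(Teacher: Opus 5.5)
Your proposal is correct and follows essentially the paper's proof. The shared steps are: the normalized Gram bound $\|\mtx R^{-\trans}\mtx F\mtx R^{-1}\|_F\le u_M\cond_F(\mtx M)=:\rho$, where the $u^2$ term coming from $\widehat{\mtx Y}=\mtx M+\mtx\Delta_{\rm out}^M\mtx E$ is exactly the $\mathcal O(u^2)$ part of $u_M$; Cholesky success from $\cond_2(\widehat{\mtx G})\le\frac{1+\rho}{1-\rho}\cond_2(\mtx M)^2$; the trace rearrangement $\|\widehat{\mtx R}\|_F^2\le\trace(\widehat{\mtx G})/(1-\gamma_{n+1})$, which removes the circularity you flagged; and the final estimate $\|\mtx H\|_F/(1-\|\mtx H\|_2)$. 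Your eigenvalue argument via $\mtx Z\mtx Z^\trans$ is equivalent to the paper's identity $\widehat{\mtx Q}^\trans\widehat{\mtx Q}-\mtx I=-(\mtx R\widehat{\mtx R}^{-1})^\trans\mtx H(\mtx R\widehat{\mtx R}^{-1})$.

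The only slip is the constant on the Cholesky term. The hypotheses give only $\rho\le 4/21$ and $\gamma_{n+1}\le 2/15$, so $\frac{1+\rho}{1-\gamma_{n+1}}$ can reach about $1.37$, not $1.05$. The paper bounds it by $1.4$, which is why the hypothesis carries the coefficient $1.5$ and the conclusion $1.25\cdot 1.4=1.75$.
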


\begin{proof}
See \cref{app:proofs_cholqr}.
\end{proof}

The following corollary specializes \cref{thm:cholqr-stability_first_order} to
generic black-box implementations of $\mtx M$. 

\begin{corollary}[Stability of Q-less CholQR for black-box operators]
\label{cor:cholqr-blackbox}
Let $\mtx M\in\R^{m\times n}$ have full column rank and suppose that perturbations in 
its forward  and adjoint applications satisfy the black-box operator assumptions from~\cref{ex:blackbox-operator}.
Then the coefficient $u_M$ in \cref{thm:cholqr-stability_first_order,thm:cholqr-stability} satisfies
\[
u_M
\le
1.2\,C u
\left(
\sqrt{n}+\cond_F(\mtx M)
\right).
\]
Consequently, under the hypotheses of \cref{thm:cholqr-stability_first_order},
\[
\left\|
\widehat{\mtx Q}^{\trans}\widehat{\mtx Q}-\mtx I
\right\|_F
\le
1.5\,C u
\left(
\sqrt{n}+\cond_F(\mtx M)
\right)
\cond_F(\mtx M)
+
1.75\,\gamma_{n+1}\cond_F(\mtx M)^2 .
\]
\end{corollary}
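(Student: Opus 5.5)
The plan is to substitute the black-box error-shaping operators of \cref{ex:blackbox-operator} directly into the definition \cref{eq:cholqr-operator-error} of $u_M$ and bound each of its two terms separately. The second claim then follows by inserting the resulting bound into \cref{eq:cholqr-Q-bound}, using $1.25\cdot 1.2=1.5$. I will take $\mtx\Delta_{\rm out}^{M}=\mtx I_m$, $\mtx\Delta_{\rm in}^{M}=C\|\mtx M\|_F\mtx I_n$, $\mtx\Delta_{\rm out}^{M^*}=\mtx I_n$ and $\mtx\Delta_{\rm in}^{M^*}=C\|\mtx M\|_F\mtx I_m$, with $\mtx M=\mtx Q\mtx R$ the exact QR factorization.

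For the first (forward) term, I will use $\|\mtx Q^\trans\mtx I_m\|_2=\|\mtx Q\|_2=1$ and $\|C\|\mtx M\|_F\mtx I_n\|_F=C\sqrt n\,\|\mtx M\|_F$. This gives
\[
u\,\frac{\|\mtx Q^\trans\mtx\Delta_{\rm out}^{M}\|_2}{\|\mtx M\|_F}\,\|\mtx\Delta_{\rm in}^{M}\|_F = Cu\sqrt n .
\]
For the second (adjoint) term, I will use $\|\mtx R^{-\trans}\|_2=1/\sigma_{\min}(\mtx R)=1/\sigma_{\min}(\mtx M)$ and $\|C\|\mtx M\|_F\mtx Q\|_2=C\|\mtx M\|_F$. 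The product is then
\[
Cu\,\frac{\|\mtx M\|_F}{\sigma_{\min}(\mtx M)}=Cu\,\cond_F(\mtx M).
\]
Summing, the first-order part of $u_M$ equals $Cu(\sqrt n+\cond_F(\mtx M))$ exactly.

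The only real obstacle is the $\mathcal O(u^2)$ remainder in \cref{eq:cholqr-operator-error}, which must be absorbed into the slack factor $1.2$. Here I would appeal to the complete version of the bound in \cref{app:proofs_cholqr} (\cref{thm:cholqr-stability}), where the higher-order terms are explicit. The smallness hypotheses \cref{eq:cholqr-smallness}, together with \cref{as:roundoff}, bound quantities like $u_M\cond_F(\mtx M)$ and $\gamma_{n+1}\cond_F(\mtx M)^2$ by small constants. The higher-order terms are products of these quantities with the first-order part, so they contribute at most a $20\%$ relative inflation. If the appendix expresses the higher-order factor as $(1-\theta)^{-1}$ with $\theta\le 0.2$-type bounds, I get $1/(1-\theta)\le 1.25$. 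That is slightly too weak, so I would check that the relevant $\theta$ is in fact below about $1/6$ under \cref{eq:standing-roundoff}. This is the step I expect to need care with the constants.

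Finally, substituting $u_M\le 1.2Cu(\sqrt n+\cond_F(\mtx M))$ into \cref{eq:cholqr-Q-bound} yields the stated orthogonality bound.
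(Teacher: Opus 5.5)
Your overall route matches the paper's: substitute the black-box shapers of \cref{ex:blackbox-operator} into $u_M$, then insert the result into the orthogonality bound using $1.25\cdot1.2=1.5$. Your first-order computation $Cu(\sqrt n+\cond_F(\mtx M))$ is correct. However, you leave unresolved the step you yourself identify as the only real obstacle, and the mechanism you guess for it is not what happens.

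The remainder is not a factor of the form $(1-\theta)^{-1}$. It is the explicit last term of $u_M$ in \cref{eq:cholqr-operator-error-full}. The same substitution you already performed evaluates it as
\[
u^2\,\frac{1}{\sigma_{\min}(\mtx M)}\cdot C\|\mtx M\|_F\cdot C\sqrt n\,\|\mtx M\|_F\cdot\frac{1}{\|\mtx M\|_F}
=C^2u^2\sqrt n\,\cond_F(\mtx M),
\]
so that exactly
\[
u_M=Cu\sqrt n+(1+Cu\sqrt n)\,Cu\,\cond_F(\mtx M).
\]
The missing ingredient is then the single scalar bound $Cu\sqrt n\le0.2$. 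It follows from the first inequality in \cref{eq:cholqr-smallness-full}. All terms of $u_M$ are nonnegative, so $u_M\ge Cu\sqrt n$; together with $\cond_F(\mtx M)\ge\sqrt n\ge1$ this gives
\[
Cu\sqrt n\le u_M\cond_F(\mtx M)\le 0.2/1.05<0.2.
\]
Hence $u_M\le Cu\sqrt n+1.2\,Cu\,\cond_F(\mtx M)\le1.2\,Cu(\sqrt n+\cond_F(\mtx M))$. The $20\%$ inflation affects only the adjoint term, and no sharper threshold such as $\theta<1/6$ is needed.

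As written, your proposal replaces this computation with a conditional (``if the appendix expresses\ldots, I would check\ldots''), so the constant $1.2$ is not actually established. Note also that, as you partly anticipated, the coefficient bound relies on the smallness hypothesis and not on the black-box model alone. You should state it as holding under the hypotheses of the theorem.
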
 

\begin{proof}
Substituting bounds~\cref{eq:blackbox-operator} into
\cref{eq:cholqr-operator-error-full} in \cref{app:proofs_cholqr} gives
$u_M=Cu(\sqrt n+\cond_F(\mtx M))
+C^2u^2\sqrt n\,\cond_F(\mtx M)$.
Under \cref{eq:cholqr-smallness-full}, $Cu\sqrt n\le0.2$, so the
claimed coefficient bound follows, and \cref{thm:cholqr-stability}
yields the orthogonality estimate.
\end{proof}

\Cref{thm:cholqr-stability_first_order,cor:cholqr-blackbox} provide a loss scaling $\|\widehat{\mtx Q}^{\trans}\widehat{\mtx Q}-\mtx I\|_F
=
\mathcal O\!\left(Cu\,\cond(\mtx M)^2\right).$
The quadratic dependence on $\cond_F(\mtx M)$ represents a fundamental
stability limitation of CholQR. 

We note that for structured operator implementations the dependence of the loss on
$C$ can be substantially milder than~\Cref{cor:cholqr-blackbox} would suggest. The error-shaping model captures this
through the structured quantities appearing in $u_M$. In particular,~\Cref{cor:cholqr-blackbox} uses the conservative unstructured bound for the dominant~term
\[
u\|\mtx R^{-\trans}\mtx\Delta_{\rm out}^{M^*}\|_2
\|\mtx\Delta_{\rm in}^{M^*}\mtx Q\|_2 \leq u\|\mtx R^{-\trans}\|_2
\|\mtx\Delta_{\rm out}^{M^*}\|_2
\|\mtx\Delta_{\rm in}^{M^*}\|_2
\leq
Cu\,\cond_F(\mtx M). 
\]
For favorable operator structures, this term can remain 
$\mathcal O(Cu)$ and provide the overall~bound 
\[
\|\widehat{\mtx Q}^{\trans}\widehat{\mtx Q}-\mtx I\|_F = \mathcal O\!\left(Cu\,\cond_F(\mtx M)\right)
+
\mathcal O\!\left(u\,\cond_F(\mtx M)^2\right).
\]
However,  even in this favorable regime  the fundamental $u^{-1/2}$ conditioning barrier of an
unpreconditioned CholQR pass is still retained.

\subsection{Stability of preconditioned Q-less Cholesky QR}
\label{subsec:pcholqr-stability}

The stability of preconditioned Q-less CholQR follows directly from
\cref{thm:cholqr-stability_first_order,thm:cholqr-stability} by viewing \cref{alg:pcholqr} as ordinary
CholQR applied to the composite operator
$\mtx M^{\rm pre}=\mtx M\mtx P^{-1}$.
Under \cref{as:inverse-preconditioner}, the rounding errors
introduced by forming and applying the inverse preconditioner are
captured by the error-shaping operators derived in
\cref{subsec:preconditioned-operator}. The complete result
including the higher-order terms, is given in~\cref{thm:pcholqr-stability}.

\begin{theorem}[Stability of preconditioned Q-less CholQR (first-order form)]
\label{thm:pcholqr-stability_first_order}
Let $\mtx M\in\R^{m\times n}$ have full column rank and suppose that it
satisfies \cref{def:operator-model}. Let $\mtx M=\mtx Q\mtx R$
be the exact QR factorization of $\mtx M$. Let
$\mtx P\in\R^{n\times n}$ be a nonsingular upper-triangular
preconditioner whose inverse actions satisfy
\cref{as:inverse-preconditioner}, and suppose that $\gamma_n\cond_F(\mtx P)\le\frac12.$
Define
\[
\mtx M^{\rm pre}:=\mtx M\mtx P^{-1}.
\]
Consider \cref{alg:pcholqr} and let
\[
\widehat{\mtx Q}
:=
\mtx M\widehat{\mtx R}^{-1},
\qquad
\widehat{\mtx R}
=
\widehat{\mtx R}^{\rm chol}\mtx P,
\]
where the matrix products are interpreted in exact arithmetic. Define
\small 
\begin{equation}
\label{eq:pcholqr-upre}
\begin{aligned}
u_{\rm pre}
&:={}
2.5\,\gamma_n\cond_F(\mtx P)
\left(
\sqrt n+\cond_2(\mtx M^{\rm pre})
\right)
+
1.25\,u
\|\mtx Q^\trans\mtx\Delta_{\rm out}^{M}\|_2
\|\mtx\Delta_{\rm in}^{M}\mtx R^{-1}\|_2\\
&+
1.25\,u
\|\mtx R^{-\trans}\mtx\Delta_{\rm out}^{M^*}\|_2
\|\mtx\Delta_{\rm in}^{M^*}\mtx Q\|_2
+
\mathcal O(u^2).
\end{aligned}
\end{equation}
\normalsize 
If, in addition to the roundoff regime of 
\small \cref{as:roundoff},
\begin{equation}
\label{eq:pcholqr-smallness}
1.05\,u_{\rm pre}\cond_F(\mtx M^{\rm pre})
+
1.5\,\gamma_{n+1}\cond_F(\mtx M^{\rm pre})^2
\le 0.2,
~~
1.6\,n\gamma_{n+1}
\cond_F(\mtx M^{\rm pre})^2
\le 1,
\end{equation}
\normalsize 
then the Cholesky factorization succeeds,
$\widehat{\mtx R}^{\rm chol}$ is nonsingular, and
\begin{equation}
\label{eq:pcholqr-Q-bound}
\boxed{
\left\|
\widehat{\mtx Q}^{\trans}\widehat{\mtx Q}-\mtx I
\right\|_F
\le
1.25\,u_{\rm pre}\cond_F(\mtx M^{\rm pre})
+
1.75\,\gamma_{n+1}
\cond_F(\mtx M^{\rm pre})^2 .
}
\end{equation}
\end{theorem}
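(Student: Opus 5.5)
The proof reduces to \cref{thm:cholqr-stability_first_order} applied to the composite operator $\mtx M^{\rm pre}=\mtx M\mtx P^{-1}$. \Cref{alg:pcholqr} is exactly \cref{alg:cholqr} run on $\mtx M^{\rm pre}[\cdot]=\mtx M[\mtx P^{\rm inv}[\cdot]]$. By \cref{prop:preconditioned-forward-shapers} and its adjoint analogue, $\mtx M^{\rm pre}$ satisfies \cref{def:operator-model} with the stated shapers, using $\gamma_n\cond_F(\mtx P)\le\frac12$.

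Let $\mtx M^{\rm pre}=\mtx Q\mtx R^{\rm pre}$ be the exact QR factorization of the preconditioned operator. It shares the factor $\mtx Q$ with $\mtx M$, and $\mtx R^{\rm pre}=\mtx R\mtx P^{-1}$. The computed $\widehat{\mtx Q}=\mtx M\widehat{\mtx R}^{-1}=\mtx M\mtx P^{-1}(\widehat{\mtx R}^{\rm chol})^{-1}$ coincides exactly with the quantity bounded by \cref{thm:cholqr-stability_first_order} for $\mtx M^{\rm pre}$. Consequently, the conclusion \cref{eq:pcholqr-Q-bound} and the hypotheses \cref{eq:pcholqr-smallness} follow verbatim once we show $u_{M^{\rm pre}}\le u_{\rm pre}$. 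The bound is monotone in $u_M$, so an upper bound suffices.

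\textbf{Step 1 (forward term).} With $\mtx\Delta_{\rm out}^{M^{\rm pre}}=[\mtx M^{\rm pre}\ \ \mtx\Delta_{\rm out}^{M}]$, we have
\[
\|\mtx Q^\trans\mtx\Delta_{\rm out}^{M^{\rm pre}}\|_2\le\|\mtx R^{\rm pre}\|_2+\|\mtx Q^\trans\mtx\Delta_{\rm out}^{M}\|_2.
\]
The input shaper is a stacked block, so its Frobenius norm is bounded by the sum of the block norms. The first block contributes $2\gamma_n\cond_F(\mtx P)\sqrt n$, and the second contributes $u\|\mtx\Delta_{\rm in}^M\mtx P^{-1}\|_F$. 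The free scalings of \cref{lem:composition} can be used to decouple the cross products. Choosing $\alpha$ to balance each (output, input) pair lets the product of sums be replaced by a sum of products, via $\min_\alpha(a\alpha+b/\alpha)$-type arguments or by treating each error component $\mtx\Delta_{\rm out}\mtx E$ separately.

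After dividing by $\|\mtx M^{\rm pre}\|_F\ge\|\mtx R^{\rm pre}\|_2$, the triangular-solve block gives $\lesssim 2\gamma_n\cond_F(\mtx P)\sqrt n$. For the operator block, write $\mtx\Delta_{\rm in}^M\mtx P^{-1}=\mtx\Delta_{\rm in}^M\mtx R^{-1}\mtx R^{\rm pre}$. This gives
\[
\|\mtx\Delta_{\rm in}^M\mtx P^{-1}\|_F\le\|\mtx\Delta_{\rm in}^M\mtx R^{-1}\|_2\,\|\mtx M^{\rm pre}\|_F,
\]
which cancels the normalization.

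\textbf{Step 2 (adjoint term).} The adjoint shapers are $\mtx\Delta_{\rm out}^{(M^{\rm pre})^*}=[\mtx P^{-\trans}\mtx\Delta_{\rm out}^{M^*}\ \ (\mtx M^{\rm pre})^\trans]$, with input blocks $\mtx\Delta_{\rm in}^{M^*}$ and $2\gamma_n\cond_F(\mtx P)\mtx I$ up to $\mathcal O(u)$ corrections. For the operator block, use $(\mtx R^{\rm pre})^{-\trans}\mtx P^{-\trans}=\mtx R^{-\trans}$, which yields exactly $u\|\mtx R^{-\trans}\mtx\Delta_{\rm out}^{M^*}\|_2\|\mtx\Delta_{\rm in}^{M^*}\mtx Q\|_2$. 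For the triangular-solve block, $(\mtx R^{\rm pre})^{-\trans}(\mtx M^{\rm pre})^\trans=\mtx Q^\trans$ has norm $1$. Combined with the $\mtx Q$-factor from the input side, this contributes $2\gamma_n\cond_F(\mtx P)$. Bounding $\|\mtx Q^\trans\cdots\|$ more crudely by $\cond_2(\mtx M^{\rm pre})$ produces the $\cond_2(\mtx M^{\rm pre})$ term in $u_{\rm pre}$.

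Summing the two steps, the constants $2\to2.5$ and $1\to1.25$ absorb the $(1+\mathcal O(u))$ slack from \cref{prop:preconditioned-forward-shapers} and the balancing of the $\alpha$'s. The result is $u_{M^{\rm pre}}\le u_{\rm pre}$.

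\textbf{Main obstacle.} The delicate point is the combination of stacked shaper blocks. $u_M$ is defined as a product of $\|\mtx Q^\trans\mtx\Delta_{\rm out}\|_2$ and $\|\mtx\Delta_{\rm in}\|_F$, so naive stacking creates cross terms. An example is $\|\mtx R^{\rm pre}\|_2\|\mtx\Delta_{\rm in}^M\mtx P^{-1}\|_F$, which could be large. The first approach I would try is to exploit the scalings in \cref{lem:composition}, sending $\alpha$ to extremes so that each block pairs only with its own partner. If that fails, I would instead reprove the relevant step of \cref{app:proofs_cholqr} with a sum of independent error terms $\sum_i\mtx\Delta_{{\rm out},i}\mtx E_i$, for which $u_M$ is naturally additive.
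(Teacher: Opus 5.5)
Your reduction to the CholQR theorem and your Step~1 are sound. With the balanced scalings $\alpha_i^2=y_i/x_i$, the product of stacked norms collapses to a sum (Cauchy--Schwarz, attained with equality). This gives $2\gamma_n\sqrt n\cond_F(\mtx P)$ plus $u\|\mtx Q^\trans\mtx\Delta_{\rm out}^M\|_2\|\mtx\Delta_{\rm in}^M\mtx R^{-1}\|_2$. Sending $\alpha$ to extremes, by contrast, would not work.

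The gap is Step~2, the adjoint triangular solve, which is where the theorem's content lies. The adjoint realization is $\mtx P^{\rm invT}[\mtx Z]=\mtx P^{-\trans}(\mtx Z+\mtx E_P^*)$ with $\mtx Z=\mtx M^\trans[\widetilde{\mtx A}]$. So the solve block has output shaper $\mtx P^{-\trans}$, not $(\mtx M^{\rm pre})^\trans$. Rewriting $\mtx P^{-\trans}\mtx E_P^*$ as $(\mtx M^{\rm pre})^\trans\mtx E$ forces $\mtx E=\mtx Q\mtx R^{-\trans}\mtx E_P^*$, which costs a factor $\|\mtx R^{-1}\|_2$.

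Now take the literal adjoint analogue of \cref{prop:preconditioned-forward-shapers}. That is the input-relative bound $\|\mtx E_P^*\|_F\le2\gamma_n\cond_F(\mtx P)\|\mtx Z\|_F$, i.e.\ composite input block $2\gamma_n\cond_F(\mtx P)\mtx M^\trans$. The normalized contribution is then
\[ \|(\mtx R^{\rm pre})^{-\trans}\mtx P^{-\trans}\|_2\, 2\gamma_n\cond_F(\mtx P)\,\|\mtx M^\trans\mtx Q\|_2 = 2\gamma_n\cond_F(\mtx P)\cond_2(\mtx M). \]
For a good preconditioner this is of order $\gamma_n\cond(\mtx M)^2$, which wrecks the result. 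This is not just a loose estimate. For $\mtx P=\mtx R$ one has $\mtx Z\approx\mtx R^\trans$, and a residual of size $\gamma_n\cond(\mtx R)\|\mtx Z\|$ amplified by $\mtx P^{-\trans}$ really is $\gamma_n\cond(\mtx M)^2$. So your value $2\gamma_n\cond_F(\mtx P)$ comes from an invalid shaper pair. ``Bounding more crudely by $\cond_2(\mtx M^{\rm pre})$'' does not derive the $\cond_2(\mtx M^{\rm pre})$ term: that factor is essential, not slack.

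The missing idea, which is the key step in the paper's proof, is to measure the solve residual relative to the computed solution. Columnwise, $\mtx E_P^*=-\delta\mtx P^\trans\mtx P^{\rm invT}[\mtx Z]$. Backward stability and $\gamma_n\cond_F(\mtx P)\le\frac12$ then give $\|\mtx E_P^*\|_F\le2\gamma_n\|\mtx P\|_F\|\mtx P^{-\trans}\mtx Z\|_F$, and $\mtx P^{-\trans}\mtx Z\approx(\mtx M^{\rm pre})^\trans\mtx M^{\rm pre}$ is small. Write $\mtx P^{-\trans}\mtx Z=(\mtx R^{\rm pre})^\trans(\mtx Q^\trans\widetilde{\mtx A}+\mtx R^{-\trans}\mtx\Delta_{\rm out}^{M^*}\mtx E_{M^*})$ and use $\|\mtx R^{-1}\|_2\le\|\mtx P^{-1}\|_2\|(\mtx R^{\rm pre})^{-1}\|_2$. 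This yields $\|\mtx R^{-\trans}\|_2\|\mtx E_P^*\|_F\le2\gamma_n\cond_F(\mtx P)\cond_2(\mtx M^{\rm pre})(\|\mtx A\|_F+\cdots)$, which is exactly where $\cond_2(\mtx M^{\rm pre})$ enters.

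In shaper language, the adjoint solve needs an input shaper proportional to $\|\mtx P\|_F\mtx P^{-\trans}$ (composite block $2\gamma_n\|\mtx P\|_F(\mtx M^{\rm pre})^\trans$), not a multiple of $\mtx I$. The distinction matters only in the adjoint pass, because there the solve acts last on the large matrix $\mtx Z$. In the forward pass it acts on $\mtx I$, where input- and output-relative bounds agree up to $\sqrt n$. With this shaper your reduction goes through to first order. The paper instead follows your fallback: it propagates $\mtx E_P$, $\mtx E_M$, $\mtx E_{M^*}$, $\mtx E_P^*$ directly into the normalized Gram error and reuses only the Cholesky part of the CholQR proof.
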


\begin{proof}
See \cref{app:proofs_pcholqr}.
\end{proof}

The following corollary specializes
\cref{thm:pcholqr-stability_first_order} to generic black-box
implementations of $\mtx M$.

\begin{corollary}[Stability of preconditioned Q-less CholQR for
black-box operators]
\label{cor:pcholqr-blackbox}
Let $\mtx M\in\R^{m\times n}$ have full column rank and suppose that
the perturbations in its forward and adjoint applications satisfy black-box operator assumptions from~\cref{ex:blackbox-operator}.
Under the assumptions and conditions of
\cref{thm:pcholqr-stability}, the coefficient $u_{\rm pre}$ satisfies
\[
u_{\rm pre}
\le
2.5\,\gamma_n\cond_F(\mtx P)
\left(
\sqrt n+\cond_2(\mtx M^{\rm pre})
\right)
+
2.81\,Cu\,\cond_F(\mtx M).
\]
Consequently,
\small 
\[
\boxed{
\begin{aligned}
\left\|
\widehat{\mtx Q}^{\trans}\widehat{\mtx Q}-\mtx I
\right\|_F
\le{}&
3.125\,\gamma_n\cond_F(\mtx P)
\left(
\sqrt n+\cond_2(\mtx M^{\rm pre})
\right)
\cond_F(\mtx M^{\rm pre})
\\
&+
3.513\,Cu\,
\cond_F(\mtx M)
\cond_F(\mtx M^{\rm pre})
\\
&+
1.75\,\gamma_{n+1}
\cond_F(\mtx M^{\rm pre})^2 .
\end{aligned}
}
\]
\normalsize 
\end{corollary}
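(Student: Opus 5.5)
The corollary follows by substituting the black-box error-shaping operators of \cref{ex:blackbox-operator} into the definition \cref{eq:pcholqr-upre} of $u_{\rm pre}$. The first term of $u_{\rm pre}$, which accounts for the inverse preconditioner, does not involve $\mtx M$'s shapers and is copied verbatim. It then suffices to bound the two operator terms by a multiple of $Cu\,\cond_F(\mtx M)$, insert the resulting bound on $u_{\rm pre}$ into \cref{eq:pcholqr-Q-bound}, and multiply by $1.25$. This gives $3.125 = 1.25\cdot 2.5$ for the first term and $1.25\cdot 2.81\approx 3.513$ for the second.

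For the forward term we take $\mtx\Delta_{\rm out}^{M}=\mtx I_m$ and $\mtx\Delta_{\rm in}^{M}=C\|\mtx M\|_F\mtx I_n$. Then
\[
\|\mtx Q^\trans\mtx\Delta_{\rm out}^{M}\|_2=\|\mtx Q\|_2=1,\qquad
\|\mtx\Delta_{\rm in}^{M}\mtx R^{-1}\|_2=C\|\mtx M\|_F\|\mtx R^{-1}\|_2=C\,\cond_F(\mtx M),
\]
using $\sigma_{\min}(\mtx R)=\sigma_{\min}(\mtx M)$. Hence this term contributes $1.25\,Cu\,\cond_F(\mtx M)$.

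For the adjoint term we take $\mtx\Delta_{\rm out}^{M^*}=\mtx I_n$ and $\mtx\Delta_{\rm in}^{M^*}=C\|\mtx M\|_F\mtx I_m$. Then
\[
\|\mtx R^{-\trans}\|_2\,C\|\mtx M\|_F\|\mtx Q\|_2=C\,\cond_F(\mtx M),
\]
so this term contributes another $1.25\,Cu\,\cond_F(\mtx M)$. Together the two operator terms give $2.5\,Cu\,\cond_F(\mtx M)$ at first order.

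The main obstacle is the gap between $2.5$ and $2.81$, which must absorb the $\mathcal O(u^2)$ remainder in \cref{eq:pcholqr-upre}. I would work from the full version, \cref{thm:pcholqr-stability} in \cref{app:proofs_pcholqr}, where the higher-order terms are explicit. They are expected to have the form of products such as $Cu\cdot\gamma_n\cond_F(\mtx P)$ or $(Cu)^2\sqrt n\,\cond_F(\mtx M)$, the latter arising as in the proof of \cref{cor:cholqr-blackbox}. The smallness hypotheses then bound these factors: $\gamma_n\cond_F(\mtx P)\le\frac12$ together with \cref{eq:pcholqr-smallness}, which in the full version implies a bound like $Cu\sqrt n\le 0.2$ as in the CholQR black-box proof. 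With these, each second-order contribution becomes a fixed fraction of $Cu\,\cond_F(\mtx M)$. I would choose the split so that the accumulated factor, for example $2.5\cdot 1.125$ coming from a relative correction of at most $1/8$, does not exceed $2.81$.

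If the appendix tracks the composite-operator scaling parameters $\alpha_i$ from \cref{lem:composition}, I would first fix them, presumably to $1$, before performing this absorption. The orthogonality estimate is then immediate from \cref{thm:pcholqr-stability}.
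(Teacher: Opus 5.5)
Your first-order computation is correct and matches the paper. With the black-box shapers, the two operator terms of \cref{eq:pcholqr-upre-full} each equal $1.25\,Cu\cond_F(\mtx M)$. The gap is in the absorption step: the higher-order terms do not have the forms you anticipate, and the bounds you name cannot control the actual ones. Substituting the shapers into the last two lines of \cref{eq:pcholqr-upre-full} gives
\[
Cu\cond_F(\mtx M)\bigl[2.5\,\gamma_n\sqrt n\,\cond_F(\mtx P)+1.5\,Cu\cond_F(\mtx M)\bigr].
\]
The inverse-solve condition $\gamma_n\cond_F(\mtx P)\le\frac12$ only yields $2.5\,\gamma_n\sqrt n\,\cond_F(\mtx P)\le1.25\sqrt n$. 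That already exceeds the available slack $2.81-2.5=0.31$ for every $n$. Likewise, a bound $Cu\sqrt n\le0.2$ says nothing about $Cu\cond_F(\mtx M)$, since $\cond_F(\mtx M)$ may be far larger than $\sqrt n$. So the claim that ``each second-order contribution becomes a fixed fraction of $Cu\cond_F(\mtx M)$'' does not follow from the tools you propose.

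The missing idea is a self-bounding argument. Both summands in the bracket are dominated by summands of $u_{\rm pre}$ itself: the first-order part already contains $2.5\,\gamma_n\sqrt n\,\cond_F(\mtx P)+2.5\,Cu\cond_F(\mtx M)$. Hence the bracket is at most $u_{\rm pre}$. Since $\cond_F(\mtx M^{\rm pre})\ge\sqrt n\ge1$, the first inequality in \cref{eq:pcholqr-smallness-full} gives $u_{\rm pre}\le0.2/1.05=4/21<0.31$. The second-order contribution is therefore at most $0.31\,Cu\cond_F(\mtx M)$, which yields the constant $2.81$. Then $1.25\cdot2.81=3.5125<3.513$ gives the orthogonality bound.

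Two minor points. Your illustrative split $2.5\cdot1.125=2.8125$ would itself overshoot $2.81$. The remark about fixing the scaling parameters $\alpha_i$ is unnecessary, since the proof substitutes the shapers of \cref{ex:blackbox-operator} directly.
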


\begin{proof}
See \cref{app:proofs_pcholqr}.
\end{proof}

The significance of
\cref{cor:pcholqr-blackbox,thm:pcholqr-stability_first_order}
is that the conditioning entering the CholQR stability bound is that of
the preconditioned operator $\mtx M^{\rm pre}$ rather than that of
$\mtx M$. The dependence on the preconditioner is represented solely through $\cond(\mtx M^{\rm pre})$ and $\cond(\mtx P)$. In particular, if $\mtx P$ is such
that
\[
\cond(\mtx M^{\rm pre})=\mathcal O(1),
\qquad
\cond(\mtx P)=\mathcal O(\cond(\mtx M)),
\]
then the quadratic dependence on $\cond(\mtx M)$ of ordinary CholQR
is reduced to a linear dependence. The dependence on
$\cond(\mtx P)$ reflects rounding errors incurred in applying the
inverse preconditioner.

For structured operator implementations, the dependence on the
black-box error coefficient $C$ can be substantially milder than
suggested by \cref{cor:pcholqr-blackbox}. This behavior is captured by
the error-shaping quantities retained in
\cref{thm:pcholqr-stability_first_order}.

\begin{remark}
\label{rem:accumulated-product}
\Cref{thm:pcholqr-stability} provides a bound for the exact product of triangular factors
$\mtx R = \widehat{\mtx R}^{\rm chol}\mtx P$. If the final factor is formed explicitly
in floating-point arithmetic, the rounding error of this additional
matrix product must be included separately. Its analysis is standard.
\end{remark}

\subsection{Stability of Q-less Cholesky QR2}
\label{subsec:stab_cholqr2}

We now specialize \cref{thm:pcholqr-stability} to Q-less CholQR2,
which performs two consecutive CholQR passes: the first produces the
triangular factor $\widehat{\mtx R}^{\rm chol}_1$, which is then used
as the preconditioner $\mtx P=\widehat{\mtx R}^{\rm chol}_1$ in the
second pass. For simplicity, we state the result for black-box operators.

\begin{corollary}[Stability of Q-less CholQR2 for black-box operators]
\label{cor:qless-cholqr2-blackbox}
Let $\mtx M\in\R^{m\times n}$ have full column rank and suppose that
its forward and adjoint applications satisfy the black-box operator
assumptions~\cref{eq:blackbox-operator}. Define
\[
u_{\rm qr2}
:=
2.77\,\gamma_n
\left(
\sqrt n+1.11
\right)
+
2.81\,Cu.
\]

Suppose that, in addition to the roundoff regime of
\cref{as:roundoff},
\begin{equation}
\label{eq:qr2-blackbox-first-smallness}
\left(
1.25\,u_{\rm qr2}
+
1.75\,\gamma_{n+1}
\right)
\cond_F(\mtx M)^2
\le0.1,
\qquad
1.6\,n\gamma_{n+1}\cond_F(\mtx M)^2
\le1.
\end{equation}
Suppose further that
\begin{equation}
\label{eq:qr2-blackbox-second-smallness}
1.17\,\sqrt n\,u_{\rm qr2}\cond_F(\mtx M)
+
1.84\,n\gamma_{n+1}
\le0.2,
\qquad
1.96\,n^2\gamma_{n+1}
\le1.
\end{equation}
Then both Cholesky factorizations succeed, the final triangular factor
$\widehat{\mtx R}$ is nonsingular, and
\begin{equation}
\label{eq:qr2-blackbox-final-bound}
\boxed{
\left\|
\widehat{\mtx Q}^{\trans}\widehat{\mtx Q}-\mtx I
\right\|_F
\le
1.39\,\sqrt n\,u_{\rm qr2}\cond_F(\mtx M)
+
2.14\,n\gamma_{n+1}.
}
\end{equation}
\end{corollary}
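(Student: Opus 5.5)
The argument has two passes. First apply \cref{cor:cholqr-blackbox} (equivalently \cref{thm:cholqr-stability_first_order} with the black-box shapers) to the first CholQR pass on $\mtx M$. Then apply \cref{cor:pcholqr-blackbox} (i.e.\ \cref{thm:pcholqr-stability_first_order}) to the second pass with $\mtx P=\widehat{\mtx R}^{\rm chol}_1$. The link between the two passes is that the first-pass orthogonality bound controls both $\cond(\mtx M^{\rm pre})$ and $\cond_F(\mtx P)$.

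\emph{Step 1 (first pass).} The first condition in \cref{eq:qr2-blackbox-first-smallness} should imply the hypothesis \cref{eq:cholqr-smallness}. This uses $u_M\le 1.2Cu(\sqrt n+\cond_F(\mtx M))$ and bounds $\sqrt n\le\cond_F(\mtx M)$, since $\|\mtx M\|_F\ge\sqrt n\,\sigma_{\min}(\mtx M)$. Consequently $u_M\le 2.4\,Cu\,\cond_F(\mtx M)$, which is absorbed by the $2.81\,Cu$ part of $u_{\rm qr2}$. Hence the first Cholesky succeeds, and with $\mtx Q_1:=\mtx M(\widehat{\mtx R}^{\rm chol}_1)^{-1}=\mtx M^{\rm pre}$ we get
\[
\eta:=\|\mtx Q_1^\trans\mtx Q_1-\mtx I\|_F\le(1.25u_{\rm qr2}+1.75\gamma_{n+1})\cond_F(\mtx M)^2\le 0.1 .
\]
From this, the singular values of $\mtx M^{\rm pre}$ lie in $[\sqrt{1-\eta},\sqrt{1+\eta}]$. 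Hence $\cond_2(\mtx M^{\rm pre})\le\sqrt{1.1/0.9}\le1.11$ and $\cond_F(\mtx M^{\rm pre})\le\sqrt{n(1+\eta)}/\sqrt{1-\eta}\le1.11\sqrt n$. Moreover, $\mtx P=(\mtx M^{\rm pre})^{+}\mtx M$ restricted to the range gives $\|\mtx P\|_F\le\|(\mtx M^{\rm pre})^+\|_2\|\mtx M\|_F$ and $\sigma_{\min}(\mtx P)\ge\sigma_{\min}(\mtx M)/\|\mtx M^{\rm pre}\|_2$. Therefore $\cond_F(\mtx P)\le\cond_2(\mtx M^{\rm pre})\cond_F(\mtx M)\le1.11\,\cond_F(\mtx M)$. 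The requirement $\gamma_n\cond_F(\mtx P)\le\tfrac12$ then follows from \cref{eq:qr2-blackbox-first-smallness} and the roundoff regime.

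\emph{Step 2 (second pass).} Substitute these estimates into \cref{cor:pcholqr-blackbox}:
\[
u_{\rm pre}\le 2.5\gamma_n(1.11\cond_F(\mtx M))(\sqrt n+1.11)+2.81Cu\cond_F(\mtx M)\le u_{\rm qr2}\cond_F(\mtx M),
\]
since $2.5\cdot1.11\le2.77$. The hypothesis \cref{eq:pcholqr-smallness} then becomes
\[
1.05\cdot1.11\sqrt n\,u_{\rm qr2}\cond_F(\mtx M)+1.5\cdot1.11^2 n\gamma_{n+1}\le0.2
\quad\text{and}\quad
1.6\cdot1.11^2n^2\gamma_{n+1}\le1 .
\]
Checking constants ($1.1655\le1.17$, $1.848\approx1.84$, $1.97\approx1.96$), these match \cref{eq:qr2-blackbox-second-smallness}, possibly after tightening $1.11$ slightly to $\sqrt{1.1/0.9}\approx1.1055$. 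The conclusion \cref{eq:pcholqr-Q-bound} then reads $1.25\cdot1.11\sqrt n\,u_{\rm qr2}\cond_F(\mtx M)+1.75\cdot1.11^2n\gamma_{n+1}$, which gives $1.39$ and $2.14$ after rounding (using $1.1055^2\cdot1.75\approx2.14$). The final $\widehat{\mtx R}=\widehat{\mtx R}^{\rm chol}_2\mtx P$ is nonsingular as a product of nonsingular factors.

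\emph{Main obstacle.} The delicate point is the bookkeeping: the preconditioner $\mtx P$ is the \emph{computed} first-pass factor. The first-pass guarantee must therefore be phrased for the exact operator $\mtx M(\widehat{\mtx R}^{\rm chol}_1)^{-1}$, which is exactly how \cref{thm:cholqr-stability_first_order} defines $\widehat{\mtx Q}$, so this is consistent. One must also make sure the black-box bound on the $\mtx M$ part of $u_{\rm pre}$ in \cref{cor:pcholqr-blackbox} is stated in terms of $\cond_F(\mtx M)$ and not $\cond_F(\mtx M^{\rm pre})$. I would double-check this interplay, together with the numeric constants obtained from $\eta\le0.1$, carefully.
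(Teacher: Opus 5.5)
Your proposal is correct and follows the paper's proof essentially step for step: the first pass via \cref{thm:cholqr-stability} gives $\|\mtx Q_1^\trans\mtx Q_1-\mtx I\|_F\le0.1$, hence $\cond_2(\mtx Q_1)\le\sqrt{11/9}$, $\cond_F(\mtx Q_1)\le\sqrt{11n/9}$ and $\cond_F(\mtx P)\le\sqrt{11/9}\,\cond_F(\mtx M)$, and the second pass is then handled by the black-box majorant of \cref{cor:pcholqr-blackbox}. As you suspected, you must use $\sqrt{11/9}\approx1.1055$ rather than $1.11$, since $2.5\cdot1.11$, $1.5\cdot1.11^2$, $1.6\cdot1.11^2$ and $1.75\cdot1.11^2$ narrowly exceed $2.77$, $1.84$, $1.96$ and $2.14$; one point should also be made explicit. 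The majorant in \cref{cor:pcholqr-blackbox}, and likewise $u_M\le1.2Cu(\sqrt n+\cond_F(\mtx M))$ in \cref{cor:cholqr-blackbox}, is derived under the very smallness hypotheses you are verifying, so to avoid circularity you need either the paper's remark that checking \cref{eq:pcholqr-smallness-full} with the majorant in place of $u_{\rm pre}$ suffices (the majorant also dominates the bracket collecting the higher-order terms), or a direct bound on that bracket from \cref{eq:qr2-blackbox-first-smallness}, which for the first pass amounts to $Cu\sqrt n<0.03$.
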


\begin{proof}
See \cref{app:proofs_cholqr2}.
\end{proof}

The second pass removes one power of the condition number from the loss
of orthogonality. In particular, for fixed dimensions and black-box
coefficient $C$, \cref{eq:qr2-blackbox-final-bound} yields the
leading-order behavior
$\|
\widehat{\mtx Q}^{\trans}\widehat{\mtx Q}-\mtx I
\|_F
=
\mathcal O\!\left(u\,\cond_F(\mtx M)\right).$
The admissible conditioning range, however, remains limited by the
first pass. Since $\widehat{\mtx R}^{\rm chol}_1$ is obtained by
ordinary CholQR, the initial Gram matrix must remain numerically
positive definite. Consequently,
\cref{eq:qr2-blackbox-first-smallness} retains the characteristic
conditioning restriction
\[
\cond_F(\mtx M)\lesssim u^{-1/2}.
\]

\subsection{Stability of Q-less shifted Cholesky QR3}
\label{subsec:stab_scholqr3}

To lift the conditioning restriction of CholQR2,
\cref{alg:scholqr3} replaces the first ordinary CholQR pass by a
shifted factorization
\begin{equation}
\label{eq:schol}
\widehat{\mtx G}+s\mtx I
=
\widehat{\mtx R}_1^\trans\widehat{\mtx R}_1,
\qquad
s=c\,u\,n\|\mtx M\|_F^2.
\end{equation}
A sufficiently large shift guarantees that the first Cholesky
factorization succeeds without requiring
$\cond(\mtx M)\lesssim u^{-1/2}$. The price is that the shifted factor
does not fully orthogonalize $\mtx M$, and the conditioning of the
intermediate operator
$\mtx M\widehat{\mtx R}_1^{-1}$ deteriorates proportionally to the
shift relative to $\sigma_{\min}(\mtx M)^2$.
The following lemma makes this precise for black-box operators.

\begin{lemma}[Conditioning after the shifted pass for black-box operators]
\label{lem:shifted-first-pass}
Let $\mtx M\in\R^{m\times n}$ have full column rank and suppose that
its forward and adjoint applications satisfy the black-box operator
assumptions~\cref{eq:blackbox-operator}. Consider
\cref{alg:scholqr3} with shift $s=c\,u\,n\|\mtx M\|_F^2,$ where $c$ is a user-defined constant. 
Assume, in addition to the roundoff regime described by
\cref{as:roundoff}, that
\begin{equation}
\label{eq:shifted-pass-smallness}
2.1\,Cu\sqrt n\le0.2,
\qquad
n\gamma_{n+1}\le\frac16,
\qquad
cun\ge
\max\left\{
4\left(2.1\,Cu\sqrt n+1.5\,\gamma_{n+1}\right),
\,2n\gamma_{n+1}
\right\}.
\end{equation}
Then the shifted Cholesky factorization succeeds and
$\widehat{\mtx R}_1$ is nonsingular. Moreover,
\begin{equation}
\label{eq:shift-R-bounds}
\sigma_{\min}(\widehat{\mtx R}_1)^2
\ge
0.55\,s,
\qquad
\|\widehat{\mtx R}_1\|_F^2
\le
\frac{
1.2+cun^2
}{
1-\gamma_{n+1}
}
\|\mtx M\|_F^2,
\end{equation}
and
\begin{equation}
\label{eq:shift-cond}
\sigma_{\max}(\mtx M\widehat{\mtx R}_1^{-1})
\le1,
\qquad
\sigma_{\min}(\mtx M\widehat{\mtx R}_1^{-1})^{-1}
\le
\sqrt{
1+1.45\,c\,u\,n\,\cond_F(\mtx M)^2
}.
\end{equation}
\end{lemma}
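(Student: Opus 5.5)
The plan is to treat the computed Gram matrix as an exact Gram matrix plus a small symmetric perturbation, show that the shift dominates this perturbation, and then read off conditioning from the Cholesky backward-error identity.

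\textbf{Step 1: Gram error.} Write $\widehat{\mtx G}=\mtx M^\trans\mtx M+\mtx E_G$, with $\mtx E_G$ taken symmetric after symmetrization. Under the black-box model \cref{eq:blackbox-operator}, the forward error in $\mtx M[\mtx I]$ is bounded in Frobenius norm by $Cu\|\mtx M\|_F\sqrt n$. Applying the adjoint columnwise to $\widehat{\mtx Y}=\mtx M[\mtx I]$ contributes a further error of at most $Cu\|\mtx M\|_F\|\widehat{\mtx Y}\|_F$. Using $2.1Cu\sqrt n\le 0.2$ to absorb second-order terms gives
\[
\|\mtx E_G\|_2\le\|\mtx E_G\|_F\le 2.1\,Cu\sqrt n\,\|\mtx M\|_F^2 .
\]

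\textbf{Step 2: success of Cholesky and the lower bound on $\sigma_{\min}(\widehat{\mtx R}_1)$.} Set $\widetilde{\mtx G}=\widehat{\mtx G}+s\mtx I$. Then $\lambda_{\min}(\widetilde{\mtx G})\ge s-\|\mtx E_G\|_2\ge 0.75s$, using $s\ge 4\cdot 2.1Cu\sqrt n\|\mtx M\|_F^2$. Also $\|\widetilde{\mtx G}\|_2\le(1.2+cun)\|\mtx M\|_F^2$. Hence $\cond(\widetilde{\mtx G})\lesssim (1.2+cun)/(0.75cun)$. The condition $cun\ge 2n\gamma_{n+1}$ together with $n\gamma_{n+1}\le 1/6$ should then yield $1.01n\cond(\widetilde{\mtx G})\le 1/\gamma_{n+1}$, so Cholesky succeeds by \cref{as:roundoff}. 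I expect this to be the fiddliest constant-chasing step and would verify it first.

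The identity $\widehat{\mtx R}_1^\trans\widehat{\mtx R}_1=\widetilde{\mtx G}+\mtx E^{\rm chol}$ holds with $\|\mtx E^{\rm chol}\|_2\le\gamma_{n+1}\||\widehat{\mtx R}_1|\|_F^2=\gamma_{n+1}\|\widehat{\mtx R}_1\|_F^2$. Taking traces gives $\|\widehat{\mtx R}_1\|_F^2(1-\gamma_{n+1})\le \trace(\widetilde{\mtx G})\le(1+2.1Cu\sqrt n)\|\mtx M\|_F^2+ns$. This yields the second bound in \cref{eq:shift-R-bounds}, since $ns=cun^2\|\mtx M\|_F^2$. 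Plugging it back gives $\|\mtx E^{\rm chol}\|_2\le 1.5\gamma_{n+1}\|\mtx M\|_F^2$ under the stated regime (using $\gamma_{n+1}cun^2$ small). Consequently $\sigma_{\min}(\widehat{\mtx R}_1)^2\ge s-(2.1Cu\sqrt n+1.5\gamma_{n+1})\|\mtx M\|_F^2\ge 0.75s$. This gives the claimed $0.55s$ with room to spare, so it also absorbs any looser constants.

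\textbf{Step 3: conditioning of $\mtx M\widehat{\mtx R}_1^{-1}$.} Write
\[
\widehat{\mtx R}_1^\trans\widehat{\mtx R}_1=\mtx M^\trans\mtx M+\mtx F,\qquad \mtx F=s\mtx I+\mtx E_G+\mtx E^{\rm chol}.
\]
The same hypothesis on $cun$ gives $\|\mtx E_G+\mtx E^{\rm chol}\|_2\le s/4$, so $0.75s\,\mtx I\preceq\mtx F\preceq1.25s\,\mtx I$. Since $\mtx F\succeq0$, we have $\mtx M^\trans\mtx M\preceq\widehat{\mtx R}_1^\trans\widehat{\mtx R}_1$, hence $\|\mtx M\widehat{\mtx R}_1^{-1}\|_2\le1$.

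For the lower bound, $\widehat{\mtx R}_1^\trans\widehat{\mtx R}_1\preceq\mtx M^\trans\mtx M+1.25s\mtx I\preceq(1+1.25s/\sigma_{\min}(\mtx M)^2)\mtx M^\trans\mtx M$. This gives $\sigma_{\min}(\mtx M\widehat{\mtx R}_1^{-1})^{-2}\le 1+1.25cun\cond_F(\mtx M)^2$, which is within the stated $1.45$ factor.
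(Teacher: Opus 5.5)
Your outline matches the paper's: the Gram perturbation is dominated by the shift, a trace bound controls $\|\widehat{\mtx R}_1\|_F^2$, and a Loewner sandwich finishes. However, the Cholesky-error step fails as written. You bound $\|\mtx E^{\rm chol}\|_2\le\gamma_{n+1}\|\widehat{\mtx R}_1\|_F^2\le1.5\,\gamma_{n+1}\|\mtx M\|_F^2$ by calling $\gamma_{n+1}cun^2$ small, but nothing in the hypotheses makes it so. Absorbing that term into $1.5\,\gamma_{n+1}\|\mtx M\|_F^2$ would need roughly $cun^2\le0.3$, whereas $c$ is only bounded from \emph{below}. Measured against the shift, the dropped term is $\frac{n\gamma_{n+1}}{1-\gamma_{n+1}}\,s$. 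For $n\gamma_{n+1}$ near its allowed maximum $1/6$ this is about $s/6$ for every admissible $c$, which is comparable to the $s/4$ you budget for everything else. So your claims $\|\mtx E_G+\mtx E^{\rm chol}\|_2\le s/4$ and $\sigma_{\min}(\widehat{\mtx R}_1)^2\ge0.75s$, and the factor $1.25$, are not established. The ``room to spare'' comes entirely from the omitted term. The fix is to keep that contribution proportional to $s$. Using your trace bound and $\gamma_{n+1}\le n\gamma_{n+1}\le1/6$,
\[
\gamma_{n+1}\|\widehat{\mtx R}_1\|_F^2
\le\frac{1.2\,\gamma_{n+1}}{1-\gamma_{n+1}}\|\mtx M\|_F^2+\frac{n\gamma_{n+1}}{1-\gamma_{n+1}}\,s
\le1.5\,\gamma_{n+1}\|\mtx M\|_F^2+\frac{s}{5}.
\]
Hence $\|\mtx E_G+\mtx E^{\rm chol}\|_2\le s/4+s/5=0.45s$, and $\mtx M^\trans\mtx M+0.55s\mtx I\preceq\widehat{\mtx R}_1^\trans\widehat{\mtx R}_1\preceq\mtx M^\trans\mtx M+1.45s\mtx I$. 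These are exactly the constants in the statement, and this is how the paper argues. Your Step 3 then goes through with $1.45$ in place of $1.25$.

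Two smaller repairs are also needed.

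\textbf{Cholesky-success check.} It does not close with $\lambda_{\max}(\widetilde{\mtx G})\le(1.2+cun)\|\mtx M\|_F^2$. With $cun\ge2n\gamma_{n+1}$ and $n\gamma_{n+1}\le1/6$, this only yields $1.01\,n\gamma_{n+1}\cond(\widetilde{\mtx G})\le1.01(0.8+2/9)\approx1.03$. Use $\|\mtx E_G\|_2\le s/4$ for the upper eigenvalue as well, giving $\lambda_{\max}(\widetilde{\mtx G})\le\|\mtx M\|_F^2+1.25s$. This yields $1.01(2/3+5/18)<1$.

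\textbf{Trace bound.} The estimate $\trace(\widetilde{\mtx G})\le1.2\|\mtx M\|_F^2+ns$ does not follow from your Frobenius bound on $\mtx E_G$. That bound only gives $|\trace(\mtx E_G)|\le\sqrt n\|\mtx E_G\|_F\le2.1\,Cun\|\mtx M\|_F^2$. Instead, write $\mtx M[\mtx I]=\mtx M+\mtx D$ and let $\mtx E$ be the adjoint error. Bound $|\trace(\mtx M^\trans\mtx D)|\le\|\mtx M\|_F\|\mtx D\|_F$ and $|\trace(\mtx E)|\le\sqrt n\|\mtx E\|_F$ separately. This gives $|\trace(\mtx E_G)|\le Cu\sqrt n(2+Cu\sqrt n)\|\mtx M\|_F^2\le0.2\|\mtx M\|_F^2$.
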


\begin{proof}
See \cref{app:proofs_cholqr3}.
\end{proof}

This yields the following stability result for Q-less sCholQR3.

\begin{corollary}[Stability of Q-less sCholQR3 for black-box operators]
\label{cor:scholqr3-stability}
Let $\mtx M\in\R^{m\times n}$ have full column rank, with $n\ge10$,
and suppose that its forward and adjoint applications satisfy the
black-box operator assumptions of \cref{ex:blackbox-operator}.
Consider \cref{alg:scholqr3} with shift $s=c\,u\,n\|\mtx M\|_F^2,$
and define coefficient 
\begin{equation}
\label{eq:scholqr3-beta}
\beta
:=
1+1.45\,c\,u\,n\,\cond_F(\mtx M)^2.
\end{equation}
Define further
\small 
\begin{equation}
\label{eq:scholqr3-phi}
u_{\rm qr3}
:=
\sqrt{
\frac{2.22}{c\,u}
+
1.85\,n^2
}
\left[
(0.51\sqrt n+2.8)\gamma_n
+
2.2(1+\gamma_n)Cu
\right].
\end{equation}
\normalsize 

Assume that the hypotheses of \cref{lem:shifted-first-pass} hold and,
in addition,
\begin{equation}
\label{eq:scholqr3-smallness}
n\beta
\left[
u_{\rm qr3}
+
1.5\,\gamma_{n+1}
\right]
\le0.2,
\qquad
1.44\,n
\left[
\sqrt{\beta}\,u_{\rm qr3}
+
1.5\,\gamma_{n+1}
\right]
\le0.2,
\end{equation}
and
\begin{equation}
\label{eq:scholqr3-chol-success}
1.9\,n^2\beta\,\gamma_{n+1}
\le1.
\end{equation}
Then all three Cholesky factorizations succeed, $\widehat{\mtx R}$ is nonsingular, and
\begin{equation}
\label{eq:scholqr3-final-bound}
\boxed{
\left\|
\widehat{\mtx Q}^{\trans}\widehat{\mtx Q}-\mtx I
\right\|_F
\le
1.5\,n\sqrt{\beta}\,u_{\rm qr3}
+
2.52\,n\,\gamma_{n+1},
}
\end{equation}
where $\widehat{\mtx Q}
:=
\mtx M\widehat{\mtx R}^{-1}$.
\end{corollary}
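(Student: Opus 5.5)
The proof treats sCholQR3 as the shifted first pass followed by two applications of \cref{thm:pcholqr-stability_first_order} (in its complete form, \cref{thm:pcholqr-stability}), each with the triangular factor from the previous pass as preconditioner. There are three stages.

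\textbf{Stage 1: the shifted pass.} First invoke \cref{lem:shifted-first-pass}. The shifted Cholesky factorization succeeds, and $\mtx P_2:=\widehat{\mtx R}_1$ satisfies $\sigma_{\max}(\mtx M\mtx P_2^{-1})\le1$ and $\sigma_{\min}(\mtx M\mtx P_2^{-1})^{-1}\le\sqrt\beta$. Hence
\[
\cond_2(\mtx M\mtx P_2^{-1})\le\sqrt\beta,\qquad \cond_F(\mtx M\mtx P_2^{-1})\le\sqrt{n\beta}.
\]
Next bound $\cond_F(\mtx P_2)$ using \cref{eq:shift-R-bounds}:
\[
\cond_F(\mtx P_2)^2\le \frac{(1.2+cun^2)\|\mtx M\|_F^2}{(1-\gamma_{n+1})\,0.55\,c\,u\,n\|\mtx M\|_F^2}\lesssim \frac{2.22}{cun}+1.85\,n .
\]
Multiplying by $n$ should reproduce the square-root prefactor $\sqrt{2.22/(cu)+1.85n^2}$ in $u_{\rm qr3}$, up to a factor $\sqrt n$ absorbed elsewhere.

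\textbf{Stage 2: the second pass.} Apply \cref{cor:pcholqr-blackbox} with $\mtx P=\mtx P_2$. Its first term, $2.5\gamma_n\cond_F(\mtx P_2)(\sqrt n+\sqrt\beta)$, is linear in $\cond_F(\mtx P_2)$. Its second term, $2.81Cu\cond_F(\mtx M)$, looks fatal because $\cond_F(\mtx M)$ can reach $u^{-1}$. This is the main obstacle.

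To get around it, I would not use the crude black-box bound. Instead I would go back to the structured quantities in \cref{eq:pcholqr-upre} and write $\mtx\Delta_{\rm in}^M\mtx R^{-1}=C\|\mtx M\|_F\mtx R^{-1}$. The idea is to exploit the identity $\mtx M^{\rm pre}=\mtx M\mtx P_2^{-1}$: reorganize the operator-error terms relative to the preconditioned operator, so that $\|\mtx M\|_F\|\mtx R^{-1}\|_2$ is replaced by $\|\mtx M\|_F\|\mtx P_2^{-1}\|_2\,\|\mtx M^{\rm pre}\|_2$. Since $\|\mtx M\|_F\|\mtx P_2^{-1}\|_2\le\cond_F(\mtx P_2)\cdot O(1)$, this turns the term into $O(Cu\,\cond_F(\mtx P_2))$, which matches the $2.2(1+\gamma_n)Cu$ inside the bracket of $u_{\rm qr3}$.

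Concretely, I would apply \cref{thm:pcholqr-stability} to the operator $\mtx M$ with its QR factorization replaced through the preconditioned operator. That is, I would bound $\|\mtx Q^\trans\mtx\Delta\|$ and $\|\mtx R^{-\trans}\mtx\Delta\|$ via $\mtx R^{-1}=\mtx P_2^{-1}(\mtx R^{\rm pre})^{-1}$ with $\|(\mtx R^{\rm pre})^{-1}\|_2\le\sqrt\beta$. This gives
\[
u_{\rm pre}\le\sqrt n\,u_{\rm qr3}\quad\text{(roughly)}.
\]
Condition \cref{eq:pcholqr-smallness} then follows from the first part of \cref{eq:scholqr3-smallness} together with \cref{eq:scholqr3-chol-success}. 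The resulting bound is
\[
\|\widehat{\mtx Q}_2^\trans\widehat{\mtx Q}_2-\mtx I\|_F\le 1.25\sqrt{n\beta}\,u_{\rm qr3}\sqrt n+1.75\gamma_{n+1}n\beta\le 0.25.
\]
In particular, $\cond_2(\mtx M\widehat{\mtx R}_2^{-1})\le1.2$ or so, and $\cond_F\le 1.2\sqrt n$.

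\textbf{Stage 3: the third pass.} Apply the same argument with $\mtx P_3=\widehat{\mtx R}_2$. Now $\cond_F(\mtx M^{\rm pre})\le1.2\sqrt n$, with $\cond_2(\mtx M^{\rm pre})=O(1)$. For $\cond_F(\mtx P_3)$, write $\mtx P_3=\widehat{\mtx R}^{\rm chol}_2\mtx P_2$; since $\widehat{\mtx R}^{\rm chol}_2$ is well conditioned, $\cond_F(\mtx P_3)$ is comparable to $\cond_F(\mtx P_2)$ up to factors like $\sqrt\beta$ and constants. The second part of \cref{eq:scholqr3-smallness} verifies the hypotheses, and the $1.44n$ factor comes from $(1.2\sqrt n)^2$. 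Substituting into \cref{eq:pcholqr-Q-bound} then gives
\[
1.25\cdot1.2\,n\sqrt\beta\,u_{\rm qr3}+1.75\cdot1.44\,n\gamma_{n+1}=1.5\,n\sqrt\beta\,u_{\rm qr3}+2.52\,n\gamma_{n+1},
\]
which is \cref{eq:scholqr3-final-bound}. The assumption $n\ge10$ is used to absorb the terms $\sqrt n+O(1)$ and $0.51\sqrt n+2.8$ into the stated constants.
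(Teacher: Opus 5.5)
\textbf{Stage~2: the $\sqrt\beta$ factor is dropped.} Your three-stage skeleton and your closing arithmetic ($1.25\cdot1.2=1.5$, $1.75\cdot1.44=2.52$) match the paper. Stage~2, however, rests on a misdiagnosis and an incorrect estimate. The term $2.81\,Cu\cond_F(\mtx M)$ is not an obstacle. The paper applies \cref{cor:pcholqr-blackbox} as is, together with $\cond_F(\mtx M)\le\sqrt\beta\,\cond_F(\widehat{\mtx R}_1)$, which follows from $\mtx M=\mtx Q_1\widehat{\mtx R}_1$, $\|\mtx Q_1\|_2\le1$ and $\sigma_{\min}(\mtx Q_1)\ge\beta^{-1/2}$. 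Your ``structured'' workaround is the same inequality in disguise, since $\|\mtx M\|_F\|\mtx R^{-1}\|_2=\cond_F(\mtx M)$.

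In carrying it out, though, you replaced $\|(\mtx R^{\rm pre})^{-1}\|_2=\sigma_{\min}(\mtx M^{\rm pre})^{-1}\le\sqrt\beta$ by $\|\mtx M^{\rm pre}\|_2\le1$, and so arrived at $u_{\rm pre}\le\sqrt n\,u_{\rm qr3}$. That bound is false when $\beta$ is large. The black-box $u_{\rm pre}$ contains $2.5\,Cu\cond_F(\mtx M)$ and $2.5\,\gamma_n\cond_F(\mtx P_2)\cond_2(\mtx M^{\rm pre})\ge2.5\,\gamma_n\cond_F(\mtx M)$, and in the ill-conditioned regime $\cond_F(\mtx M)$ is comparable to $\sqrt\beta\,\cond_F(\widehat{\mtx R}_1)$.

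The correct estimate is $u_{\rm pre}\le\frac14\sqrt{n\beta}\,u_{\rm qr3}$, using $\sqrt n+\sqrt\beta\le\sqrt\beta(\sqrt n+1)$ and $n\ge10$. The second-pass loss is then at most $\frac5{16}n\beta u_{\rm qr3}+1.75\,n\beta\gamma_{n+1}$. This is exactly why the first condition in \cref{eq:scholqr3-smallness} carries $\beta$ rather than $\sqrt\beta$, and it is the origin of the $u^{-3/4}$ barrier. Your version would make the second pass safe up to $\cond(\mtx M)\sim u^{-1}$, which should have been a warning sign. Also, by abandoning the black-box corollary you lose its sufficient test. That test is what certifies the full higher-order coefficient and the inverse-solve condition $\gamma_n\cond_F(\mtx P)\le\frac12$.

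\textbf{Stage~3: Cholesky success and constants.} The Cholesky-success hypothesis for the third pass, $1.6\,n\gamma_{n+1}\cond_F(\mtx Q_2)^2\le2.304\,n^2\gamma_{n+1}\le1$, is not verified. It does not follow from the second condition in \cref{eq:scholqr3-smallness}, which only gives $n\gamma_{n+1}\le0.093$. Nor does it follow from \cref{eq:scholqr3-chol-success} with merely $\beta\ge1$, which gives only $n^2\gamma_{n+1}\le1/1.9$. The paper closes it by observing that $\beta\ge1.45\,cun^2\ge2.9\,n^2\gamma_{n+1}$, using $\cond_F(\mtx M)^2\ge n$ and $cun\ge2n\gamma_{n+1}$ from \cref{eq:shifted-pass-smallness}. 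Then \cref{eq:scholqr3-chol-success} yields $(n^2\gamma_{n+1})^2\le1/(1.9\cdot2.9)$.

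Your constants also do not close. A second-pass loss of $0.25$ gives only $\cond_2(\mtx Q_2)\le\sqrt{5/3}\approx1.29$, not $1.2$, and hence not the factor $1.44\,n$. You need the loss below $\frac16$. This follows from $\frac5{16}n\beta u_{\rm qr3}\le\frac1{16}$ and $1.75\,n\beta\gamma_{n+1}\le1.75/(1.9n)$ with $n\ge10$.

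Finally, $\widehat{\mtx R}^{\rm chol}_2$ is not well conditioned: its condition number can be as large as roughly $\cond_2(\mtx Q_1)$, which is only bounded by $\sqrt\beta$. The clean estimate is $\cond_F(\widehat{\mtx R}_2)\le\cond_2(\mtx Q_2)\cond_F(\mtx M)\le1.2\sqrt\beta\,\cond_F(\widehat{\mtx R}_1)$. That gives $u_{\rm pre}\le\sqrt{n\beta}\,u_{\rm qr3}$ for the third pass, and hence the $\sqrt\beta$ in \cref{eq:scholqr3-final-bound}.
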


\begin{proof}
See \cref{app:proofs_cholqr3}.
\end{proof}

Once the shifted pass has succeeded, the two subsequent preconditioned
passes recover the stability of Q-less CholQR2, but over the larger
admissible range prescribed by \cref{eq:scholqr3-smallness}. In the
strongly ill-conditioned regime,
$\beta=\mathcal O(u\,\cond(\mtx M)^2)$ and
$u_{\rm qr3}=\mathcal O(u^{1/2})$.
Consequently, the dominant condition in \cref{eq:scholqr3-smallness,eq:scholqr3-final-bound} scale as
\[
\beta u_{\rm qr3}
=
\mathcal{O}\left(
u^{3/2}\cond(\mtx M)^2
\right),\quad 
\sqrt{\beta} u_{\rm qr3}
=
\mathcal{O}\left(
u\cond(\mtx M)
\right)
\]
and yield the admissible range
\begin{equation}
\label{eq:scholqr3-admissible-range}
\cond(\mtx M)
\lesssim
u^{-3/4}.
\end{equation}
Thus, Q-less sCholQR3 extends the
$u^{-1/2}$ admissible range of Q-less CholQR2 to
$u^{-3/4}$, but it still does not reach the full numerically meaningful regime $\cond(\mtx M)\lesssim u^{-1}$.

\begin{remark}[Why the shifted Q-less method stops at $u^{-3/4}$]
\label{rem:shift-vs-sketch}
The limitation comes from the conditioning of the operator produced by
the shifted pass. By \cref{eq:shift-cond},
\[
\sigma_{\min}(\mtx M\widehat{\mtx R}_1^{-1})
\ge
\beta^{-1/2},
\]
and, in the strongly ill-conditioned regime,
$\beta=\mathcal O(u \, \cond(\mtx M)^2)$. Thus, although the shift
guarantees the success of the first Cholesky factorization, it leaves
an intermediate operator whose conditioning can grow as
$\mathcal O(u^{1/2}\cond(\mtx M))$. 

In contrast, as will be shown below, an effective randomized preconditioner can make
$\mtx M\mtx P^{-1}$ well conditioned directly, without first passing
through such an ill-conditioned intermediate operator. This removes
the $\beta$ factor from the corresponding stability condition and extends the admissible range to
$\cond(\mtx M)\lesssim u^{-1}$.
\end{remark}

\subsection{Stability of randomized Q-less Cholesky QR}
\label{subsec:stab_CQRRTO}

CQRRTO uses the triangular factor
$\widehat{\mtx R}^{\rm sk}$ of a random sketch of $\mtx M$ as the
preconditioner. Its stability therefore relies on two properties of
the computed sketch: the exact sketch must satisfy the embedding property, and the errors incurred in
forming and factorizing the sketch must be sufficiently small. 

\begin{assumption}[Computed sketched preconditioner]
\label{as:sketch}
The computed sketch satisfies
\begin{equation}
\label{eq:sketch-formation-model}
\widehat{\mtx M}^{\rm sk}
=
\mtx S\mtx M+\mtx E^{\rm sk},
\qquad
\|\mtx E^{\rm sk}\|_2
\le
u_{\rm sk}\|\mtx M\|_2,
\end{equation}
and its computed triangular factor $\widehat{\mtx R}^{\rm sk}$
satisfies
\[
\mtx Q^{\rm sk}\widehat{\mtx R}^{\rm sk}
=
\widehat{\mtx M}^{\rm sk}+\mtx E^{\rm qr},
\]
for some $\mtx Q^{\rm sk}$ with orthonormal columns, where
\begin{equation}
\label{eq:computed-sketch-qr-backward}
\|\mtx E^{\rm qr}\|_2
\le
u_{\rm qr}\|\widehat{\mtx M}^{\rm sk}\|_2,
\qquad
u_{\rm qr}\le0.01.
\end{equation}
\end{assumption}

\begin{remark}[Verifying \cref{as:sketch}]
\label{rem:sketch-constants}
For a backward-stable QR factorization,
\cref{eq:computed-sketch-qr-backward} holds with the usual
backward-error constants. For example, Householder QR satisfies
\[
\|\mtx E^{\rm qr}\|_F
\le
\gamma_{6dn}\|\widehat{\mtx M}^{\rm sk}\|_F
\]
for $\widehat{\mtx M}^{\rm sk}\in\R^{d\times n}$
\cite[Ch.~19]{Higham2002}, and hence one may take
\[
u_{\rm qr}
=
\sqrt n\,\gamma_{6dn}.
\]
The sketch-formation coefficient $u_{\rm sk}$ can similarly be
bounded from the rounding-error model of the particular implementation
of $\mtx S\mtx M$. We retain $u_{\rm sk}$ explicitly, since this
bound depends on the sketching operator and on how the sketch is
formed.
\end{remark}

The following lemma shows that the embedding property is inherited by
the computed triangular factor.

\begin{lemma}[Conditioning of the computed sketched preconditioner]
\label{lem:computed-sketch-conditioning}
Let $\mtx M\in\R^{m\times n}$ have full column rank and suppose that
$\mtx S$ is a subspace embedding for $\range(\mtx M)$ satisfying
\begin{equation}
\label{eq:computed-sketch-embedding}
(1-\varepsilon_1)\|\mtx M\vct x\|_2^2
\le
\|\mtx S\mtx M\vct x\|_2^2
\le
(1+\varepsilon_2)\|\mtx M\vct x\|_2^2,
\qquad
\vct x\in\R^n,
\end{equation}
where $0\le\varepsilon_1<1$ and $\varepsilon_2\ge0$.
Suppose further that \cref{as:sketch} holds and assume that 
\begin{equation}
\label{eq:computed-sketch-rho-def}
\rho_{\rm sk}
:=
\left[
(1+u_{\rm qr})u_{\rm sk}
+
u_{\rm qr}\sqrt{1+\varepsilon_2}
\right]
\cond_2(\mtx M)<\sqrt{1-\varepsilon_1}.
\end{equation}
Then define
\small 
\begin{equation}
\label{eq:CQRRTO-tau}
\tau
:=
\frac{
\sqrt{1+\varepsilon_2}+\rho_{\rm sk}
}{
\sqrt{1-\varepsilon_1}-\rho_{\rm sk}
}.
\end{equation}
\normalsize 
Then, with $\mtx P:=\widehat{\mtx R}^{\rm sk}$,
\begin{equation}
\label{eq:CQRRTO-preconditioner-bounds}
\cond_2(\mtx M\mtx P^{-1})\le\tau,
\qquad
\cond_F(\mtx M\mtx P^{-1})\le\sqrt n\,\tau,
\qquad
\cond_F(\mtx P)\le\tau\cond_F(\mtx M).
\end{equation}
\end{lemma}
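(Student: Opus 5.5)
Our plan is a perturbation argument. We bound $\|\mtx P\vct x\|_2$ above and below in terms of $\|\mtx M\vct x\|_2$, and then translate these two-sided bounds into singular-value bounds for $\mtx M\mtx P^{-1}$ and for $\mtx P$.

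\emph{Two-sided bound on $\|\mtx P\vct x\|_2$.} Since $\mtx Q^{\rm sk}$ has orthonormal columns, \cref{as:sketch} gives
\[
\|\mtx P\vct x\|_2
=\|\mtx Q^{\rm sk}\mtx P\vct x\|_2
=\|\mtx S\mtx M\vct x+\mtx E^{\rm sk}\vct x+\mtx E^{\rm qr}\vct x\|_2 .
\]
By \cref{eq:computed-sketch-embedding}, $\|\mtx S\mtx M\vct x\|_2$ lies between $\sqrt{1-\varepsilon_1}\|\mtx M\vct x\|_2$ and $\sqrt{1+\varepsilon_2}\|\mtx M\vct x\|_2$. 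For the perturbations, we bound
\[
\|\mtx E^{\rm sk}\vct x\|_2\le u_{\rm sk}\|\mtx M\|_2\|\vct x\|_2 .
\]
We also use
\[
\|\mtx E^{\rm qr}\vct x\|_2\le u_{\rm qr}\|\widehat{\mtx M}^{\rm sk}\|_2\|\vct x\|_2
\quad\text{with}\quad
\|\widehat{\mtx M}^{\rm sk}\|_2\le(\sqrt{1+\varepsilon_2}+u_{\rm sk})\|\mtx M\|_2 ,
\]
where the bound on $\|\mtx S\mtx M\|_2$ comes from the upper embedding inequality. Together the perturbations are at most
\[
\bigl[(1+u_{\rm qr})u_{\rm sk}+u_{\rm qr}\sqrt{1+\varepsilon_2}\bigr]\|\mtx M\|_2\|\vct x\|_2 .
\]
The key observation is $\|\vct x\|_2\le\|\mtx M\vct x\|_2/\sigma_{\min}(\mtx M)$. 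Therefore $\|\mtx M\|_2\|\vct x\|_2\le\cond_2(\mtx M)\|\mtx M\vct x\|_2$, and the total perturbation is at most $\rho_{\rm sk}\|\mtx M\vct x\|_2$. The triangle inequality then yields
\[
(\sqrt{1-\varepsilon_1}-\rho_{\rm sk})\|\mtx M\vct x\|_2\le\|\mtx P\vct x\|_2\le(\sqrt{1+\varepsilon_2}+\rho_{\rm sk})\|\mtx M\vct x\|_2 .
\]
The lower constant is positive by hypothesis \cref{eq:computed-sketch-rho-def}, so $\mtx P$ is nonsingular.

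\emph{Conditioning of $\mtx M\mtx P^{-1}$.} Substitute $\vct y=\mtx P\vct x$. The singular values of $\mtx M\mtx P^{-1}$ then lie in the interval $[(\sqrt{1+\varepsilon_2}+\rho_{\rm sk})^{-1},(\sqrt{1-\varepsilon_1}-\rho_{\rm sk})^{-1}]$, which gives $\cond_2\le\tau$. The Frobenius version follows from $\|\mtx X\|_F\le\sqrt n\|\mtx X\|_2$ for $\mtx X$ with $n$ columns.

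\emph{Bound on $\cond_F(\mtx P)$.} Write $\mtx P=(\mtx M\mtx P^{-1})^{+}\mtx M$. This holds because $\mtx M\mtx P^{-1}$ has full column rank, so its pseudoinverse is a left inverse. Since $\|\mtx A\mtx B\|_F\le\|\mtx A\|_2\|\mtx B\|_F$, we get
\[
\|\mtx P\|_F\le\sigma_{\min}(\mtx M\mtx P^{-1})^{-1}\|\mtx M\|_F .
\]
From $\mtx M=(\mtx M\mtx P^{-1})\mtx P$ we get
\[
\sigma_{\min}(\mtx M)\le\|\mtx M\mtx P^{-1}\|_2\,\sigma_{\min}(\mtx P).
\]
Combining the two, $\cond_F(\mtx P)\le\cond_2(\mtx M\mtx P^{-1})\cond_F(\mtx M)\le\tau\cond_F(\mtx M)$.

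The main obstacle is the first step. We must control $\mtx E^{\rm qr}$ and $\mtx E^{\rm sk}$, which are normwise in $\|\mtx M\|_2$, relative to $\|\mtx M\vct x\|_2$, since $\|\mtx M\vct x\|_2$ may be much smaller than $\|\mtx M\|_2\|\vct x\|_2$. The factor $\cond_2(\mtx M)$ absorbs this discrepancy. We must also bound $\|\widehat{\mtx M}^{\rm sk}\|_2$ carefully so that the constant matches the form of $\rho_{\rm sk}$ exactly.
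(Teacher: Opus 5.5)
Your proposal is correct and follows essentially the same route as the paper. You bound the combined perturbation by $\rho_{\rm sk}\sigma_{\min}(\mtx M)\|\vct x\|_2\le\rho_{\rm sk}\|\mtx M\vct x\|_2$, derive the two-sided bound on $\|\mtx P\vct x\|_2$, and read off the singular values of $\mtx M\mtx P^{-1}$, exactly as the paper does. The only difference is in the last step: the paper reads off $\|\mtx P\|_F$ and $\sigma_{\min}(\mtx P)$ directly from the two-sided inequality, whereas you go through the general bound $\cond_F(\mtx P)\le\cond_2(\mtx M\mtx P^{-1})\cond_F(\mtx M)$, which yields identical constants.
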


\begin{proof}
See \cref{app:proofs_CQRRTO}.
\end{proof}

We can now obtain the stability of CQRRTO directly from
\cref{cor:pcholqr-blackbox}. For clarity, we state the stability result only for operators satisfying the black-box model~\cref{eq:blackbox-operator}. The extension to the general error-shaping model of \cref{def:operator-model} follows directly from \cref{thm:pcholqr-stability_first_order}.

\begin{corollary}[Stability of CQRRTO for black-box operators]
\label{cor:computed-sketched-preconditioner-stability}
Let $\mtx M\in\R^{m\times n}$ have full column rank and suppose that
its forward and adjoint applications satisfy the black-box rounding-error
bounds~\cref{eq:blackbox-operator}. Consider \cref{alg:cqrrt} under
the sketch-formation and QR backward-error model of \cref{as:sketch},
and suppose that the sketch preserves norms on $\range(\mtx M)$
as specified in \cref{eq:computed-sketch-embedding}.
Let $\rho_{\rm sk}$ and $\tau$ be defined by
\cref{eq:computed-sketch-rho-def,eq:CQRRTO-tau}, with
$\rho_{\rm sk}<\sqrt{1-\varepsilon_1}$ so that the computed sketch
retains full column rank.
Suppose that the inverse actions of
$\mtx P=\widehat{\mtx R}^{\rm sk}$ satisfy the triangular-solve
backward-error bounds of \cref{as:inverse-preconditioner}, and that the product used to
accumulate the triangular factors is interpreted in exact arithmetic.
Define
\begin{equation}
\label{eq:CQRRTO-u}
u_{\rm CQRRTO}
:=
2.5\,\gamma_n\tau
\left(
\sqrt n+\tau
\right)
+
2.81\,Cu.
\end{equation}
Suppose that, in addition to the roundoff regime of
\cref{as:roundoff},
\begin{equation}
\label{eq:CQRRTO-smallness}
1.05\,\sqrt n\,\tau\,
u_{\rm CQRRTO}\cond_F(\mtx M)
+
1.5\,n\tau^2\gamma_{n+1}
\le0.2, \quad 
1.6\,n^2\tau^2\gamma_{n+1}
\le1.
\end{equation}
Then the Cholesky factorization succeeds, the returned
factor $\widehat{\mtx R}$ is nonsingular, and
\begin{equation}
\label{eq:computed-sketch-Q-bound}
\boxed{
\left\|
\widehat{\mtx Q}^{\trans}\widehat{\mtx Q}-\mtx I
\right\|_F
\le
1.25\,\sqrt n\,\tau\,
u_{\rm CQRRTO}\cond_F(\mtx M)
+
1.75\,n\tau^2\gamma_{n+1},
}
\end{equation}
where $\widehat{\mtx Q}
:=
\mtx M\widehat{\mtx R}^{-1}$.
\end{corollary}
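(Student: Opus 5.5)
The plan is to obtain \cref{cor:computed-sketched-preconditioner-stability} by direct substitution into \cref{cor:pcholqr-blackbox} (equivalently \cref{thm:pcholqr-stability_first_order}), with $\mtx P:=\widehat{\mtx R}^{\rm sk}$. The conditioning inputs come from \cref{lem:computed-sketch-conditioning}.

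First, we verify the hypotheses of that lemma. The condition $\rho_{\rm sk}<\sqrt{1-\varepsilon_1}$ is assumed, together with the embedding property \cref{eq:computed-sketch-embedding} and \cref{as:sketch}. The lemma then gives three bounds:
\[
\cond_2(\mtx M^{\rm pre})\le\tau,\qquad \cond_F(\mtx M^{\rm pre})\le\sqrt n\,\tau,\qquad \cond_F(\mtx P)\le\tau\cond_F(\mtx M).
\]
In particular $\mtx P$ is nonsingular and upper triangular. Its inverse actions satisfy \cref{as:inverse-preconditioner} by hypothesis. The side condition $\gamma_n\cond_F(\mtx P)\le\frac12$ needed by \cref{thm:pcholqr-stability_first_order} follows from $\gamma_n\tau\cond_F(\mtx M)\le\frac12$. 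That inequality in turn follows from the first inequality of \cref{eq:CQRRTO-smallness}: since $\tau\ge1$, we have $u_{\rm CQRRTO}\ge 2.5\gamma_n\tau\sqrt n$, so $1.05\sqrt n\,\tau\,u_{\rm CQRRTO}\cond_F(\mtx M)\le 0.2$ forces $\gamma_n\tau\cond_F(\mtx M)$ to be far below $\frac12$.

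Second, we bound $u_{\rm pre}$ using \cref{cor:pcholqr-blackbox}:
\[
u_{\rm pre}\le 2.5\gamma_n\cond_F(\mtx P)\bigl(\sqrt n+\cond_2(\mtx M^{\rm pre})\bigr)+2.81\,Cu\cond_F(\mtx M).
\]
Inserting $\cond_F(\mtx P)\le\tau\cond_F(\mtx M)$ and $\cond_2(\mtx M^{\rm pre})\le\tau$ gives $u_{\rm pre}\le u_{\rm CQRRTO}\cond_F(\mtx M)$, with $u_{\rm CQRRTO}$ exactly as defined in \cref{eq:CQRRTO-u}. The hypotheses \cref{eq:pcholqr-smallness} are then implied by \cref{eq:CQRRTO-smallness}: replace $u_{\rm pre}$ by $u_{\rm CQRRTO}\cond_F(\mtx M)$ and $\cond_F(\mtx M^{\rm pre})$ by $\sqrt n\,\tau$. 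Both left-hand sides of \cref{eq:pcholqr-smallness} are monotone in these quantities, so the upper bounds suffice. \Cref{thm:pcholqr-stability_first_order} therefore yields success of the Cholesky factorization and nonsingularity of $\widehat{\mtx R}^{\rm chol}$, and hence of $\widehat{\mtx R}=\widehat{\mtx R}^{\rm chol}\mtx P$, since the product is taken in exact arithmetic. The boxed bound \cref{eq:pcholqr-Q-bound} with the same substitutions gives \cref{eq:computed-sketch-Q-bound}.

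The main obstacle is bookkeeping rather than ideas. We must ensure that the bound in \cref{cor:pcholqr-blackbox} is applied where $u_{\rm pre}$ multiplies the upper bound $\sqrt n\tau$ rather than the true $\cond_F(\mtx M^{\rm pre})$. This is legitimate because every term is nonnegative and increasing. We must also track the $\mathcal O(u^2)$ remainders hidden in $u_{\rm pre}$. If the full version \cref{thm:pcholqr-stability} carries explicit higher-order terms, the first step would be to check that they are absorbed by the slack in the constants, for example $2.81$ versus $1.25\cdot 2.25$, under \cref{as:roundoff}. If that fails, we would simply state the result to first order, consistent with the first-order form used throughout this section.
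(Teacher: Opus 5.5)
Your route is the paper's: \cref{lem:computed-sketch-conditioning} supplies the three conditioning bounds, the black-box majorant is bounded by $u_{\rm CQRRTO}\cond_F(\mtx M)$, and the preconditioned CholQR theorem is applied. Your derivation of $\gamma_n\cond_F(\mtx P)\le\frac12$ from \cref{eq:CQRRTO-smallness} is also correct.

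The step that fails as written is the order of the argument in your second paragraph. The estimate $u_{\rm pre}\le 2.5\gamma_n\cond_F(\mtx P)(\sqrt n+\cond_2(\mtx M^{\rm pre}))+2.81\,Cu\cond_F(\mtx M)$ in \cref{cor:pcholqr-blackbox} is proved \emph{under} the smallness conditions \cref{eq:pcholqr-smallness-full} for the full coefficient. That is where $u_{\rm pre}\le 4/21<0.31$ comes from. You then use the estimate to verify those very conditions, which is circular. You also cannot fall back on \cref{thm:pcholqr-stability_first_order}. With an unspecified $\mathcal O(u^2)$ in $u_{\rm pre}$, it cannot yield the non-asymptotic inequality \cref{eq:computed-sketch-Q-bound}, so retreating to a first-order statement would prove less than the corollary claims.

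The repair is not absorption ``under \cref{as:roundoff}'', since that assumption controls neither $Cu\cond_F(\mtx M)$ nor $\gamma_n\sqrt n\cond_F(\mtx P)$. Instead, apply the smallness hypothesis to the majorant itself. With the black-box shapers, \cref{eq:pcholqr-upre-full} reads
\[
u_{\rm pre}=2.5\gamma_n\cond_F(\mtx P)\bigl(\sqrt n+\cond_2(\mtx M^{\rm pre})\bigr)+2.5\,Cu\cond_F(\mtx M)+Cu\cond_F(\mtx M)\,B,
\qquad
B:=2.5\gamma_n\sqrt n\cond_F(\mtx P)+1.5\,Cu\cond_F(\mtx M).
\]
From $\cond_F(\mtx P)\le\tau\cond_F(\mtx M)$ you get $B\le u_{\rm CQRRTO}\cond_F(\mtx M)$. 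The first inequality of \cref{eq:CQRRTO-smallness} then gives $u_{\rm CQRRTO}\cond_F(\mtx M)\le 0.2/(1.05\sqrt n\,\tau)\le 4/21<0.31$. Hence $u_{\rm pre}\le u_{\rm CQRRTO}\cond_F(\mtx M)$ holds outright; the relevant slack is $2.81-2.5=0.31$, not a comparison with $1.25\cdot 2.25$. After this, \cref{eq:CQRRTO-smallness} implies \cref{eq:pcholqr-smallness-full}, and the full \cref{thm:pcholqr-stability} gives the result. This is exactly the ``sufficient test'' the paper isolates in the proof of \cref{cor:pcholqr-blackbox} and then invokes for CQRRTO.
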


\begin{proof}
See \cref{app:proofs_CQRRTO}.
\end{proof}

For typical sketching matrices, the embedding distortions
$\varepsilon_1$ and $\varepsilon_2$ are bounded by constants with high
probability. Provided that the errors in forming and factorizing the
sketch are sufficiently small so that $\rho_{\rm sk}$ remains bounded away
from $\sqrt{1-\varepsilon_1}$, this gives $
\tau=\mathcal O(1)$
independently of $\cond(\mtx M)$. For fixed dimensions and black-box
coefficient $C$, \cref{eq:CQRRTO-u} therefore gives $u_{\rm CQRRTO}=\mathcal O(u),$
and the dominant stability condition
\cref{eq:CQRRTO-smallness} scales as
$$u\,\cond_F(\mtx M)\lesssim1.$$
Thus, CQRRTO extends the $u^{-1/2}$ admissible range of Q-less CholQR
and CholQR2, and the $u^{-3/4}$ range of Q-less sCholQR3, to the full
numerically meaningful regime $\cond_F(\mtx M)\lesssim u^{-1}$.

Similar stability guarantees apply to CQRRTO2
(\cref{alg:cqrrt2}).
Furthermore, since the second sketch is formed from an already preconditioned
operator, it is expected to yield a smaller effective $\tau$,
improving the constants and extending the admissible conditioning
range within the same $\mathcal O(u^{-1})$ regime.
\section{Experiments}
\label{sec:experiments}

This section has two parts. \Cref{subsec:synthetic} characterizes the stability
and downstream accuracy of Q-less QR methods on small synthetic problems
in MATLAB.
\Cref{subsec:ls_benchmarks} evaluates triangular factors computed by
the RandLAPACK implementations from \cref{sec:impl} as right preconditioners
for iterative least-squares solvers on matrix-free finite-element
and Toeplitz benchmarks. 

\subsection{Synthetic numerical benchmarks}
\label{subsec:synthetic}

\subsubsection{Orthogonalization stability}
The analysis in \cref{sec:theory} predicts how the loss of
orthogonality depends on the conditioning of $\mtx M$ and the errors
in its application. We test these predictions using two families of
synthetic operators. 

\paragraph{Black-box operator.}
We form and store
\[
\mtx M = \mtx U\mtx D\mtx V^\trans,
\]
where $\mtx U\in\R^{m\times n}$ and $\mtx V\in\R^{n\times n}$
have Haar-distributed orthonormal columns, and $\mtx D$ is diagonal
with entries logarithmically spaced between $1$ and $1/\kappa$,
so that $\cond(\mtx M)=\kappa$. The algorithms access $\mtx M$
through the routines
\[
\mtx M[\mtx X]
=
\mtx M\mtx X
+
Cu\|\mtx M\|_F\,\mtx E_{\rm f}\mtx X,
\qquad
\mtx M^\trans[\mtx Y]
=
\mtx M^\trans\mtx Y
+
Cu\|\mtx M\|_F\,\mtx E_{\rm a}\mtx Y,
\]
where $\mtx E_{\rm f}$ and $\mtx E_{\rm a}$ are obtained from
independent standard Gaussian matrices by normalizing each to unit
spectral norm. They are drawn once and held fixed. These perturbations
follow the black-box model of \cref{ex:blackbox-operator}. We vary
$C$ over six orders of magnitude.

\paragraph{Shaped operator.}
We also consider the matrix-free composition
\[
\mtx M
=
\mtx T_1^{-1}\mtx A\mtx T_3^{-1}
\mtx W\mtx D\mtx T_2^{-1},
\]
where $\mtx T_1\in\R^{m\times m}$ and
$\mtx T_2\in\R^{n\times n}$ are sparse, well-conditioned unit
triangular matrices, $\mtx A\in\R^{m\times n}$ is sparse with
$\cond(\mtx A)\approx\kappa_A$, and $\mtx T_3$ is the triangular
factor in a thin QR factorization of $\mtx A$. The matrix $\mtx W$
is orthogonal, and $\mtx D$ is diagonal with all entries equal to $1$ except five interior entries equal to $1/\kappa$, so that $\cond(\mtx D)=\kappa$. $\mtx M$ is never formed explicitly and is applied
through matrix products and triangular solves.

Since $\mtx A\mtx T_3^{-1}$ has orthonormal columns, the conditioning
of $\mtx M$ is determined by $\mtx D$, up to the effect of the
well-conditioned factors $\mtx T_1$ and $\mtx T_2$. Thus,
$\cond(\mtx M)\approx\kappa$ even as $\kappa_A$ increases.
However, $\mtx A$ and $\mtx T_3^{-1}$ each have condition number
approximately $\kappa_A$, so applying them separately can amplify
rounding errors. Bounds based only on the overall error norm can
therefore become increasingly pessimistic as $\kappa_A$ grows.
The error-shaping bounds retain information about how these errors
propagate through the composition and can give sharper estimates.

\paragraph{Setup.}
All experiments use double precision with $m=500$ and $n=40$.
For each target value of $\kappa$, we generate $10$ independent
operators and report the maximum of each error measure over the draws.
We compare Q-less CholQR (\cref{alg:cholqr}), Q-less CholQR2
(\cref{alg:pcholqr}, using the triangular factor from the first
CholQR pass as the preconditioner), Q-less sCholQR3
(\cref{alg:scholqr3}), CQRRTO (\cref{alg:cqrrt}) and  CQRRTO2 (\cref{alg:cqrrt2}).
For sCholQR3, we use the shift constant $c=11$. Tests with $c=100$
and $c=1000$ gave comparable or worse accuracy. CQRRTO uses a sparse
sign embedding with sketch dimension $d=2n$ and $\zeta=8$ nonzeros
per column. If a Cholesky factorization fails, we return the identity
matrix as the triangular factor.

We measure the loss of orthogonality
\[
\|\mtx Q^\trans\mtx Q-\mtx I\|_2,
\qquad
\mtx Q=\mtx M\widehat{\mtx R}^{-1},
\]
where $\mtx Q$ is computed using triangular solves.

\paragraph{Loss of orthogonality: black-box operator.}
\Cref{fig:synth-blackbox-orth} shows the loss of orthogonality as
$\cond(\mtx M)$ increases, for perturbation levels
$C=0$, $10^2$, $10^4$, and $10^6$.

\begin{figure}[htb]
    \centering
    \includegraphics[width=\textwidth]{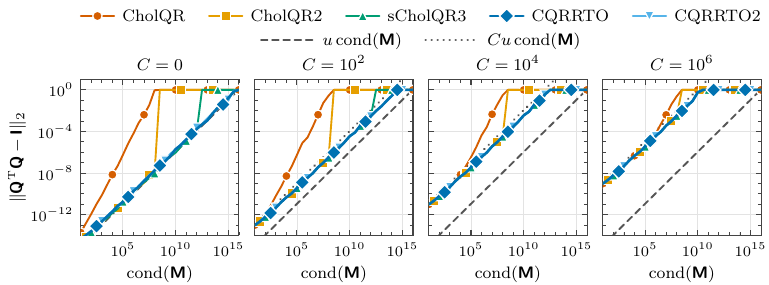}
    \caption{Loss of orthogonality $\|\mtx Q^\trans\mtx Q-\mtx I\|_2$
    against $\cond(\mtx M)$ for the black-box operator, with
    $C=0$, $10^2$, $10^4$, and $10^6$ from left to right.
    For $C>0$, the dotted
    line shows the reference scaling $Cu\,\cond(\mtx M)$.
    }
    \label{fig:synth-blackbox-orth}
\end{figure}

For $C=0$, the results are consistent with
\cref{tab:stability-summary}. CholQR exhibits quadratic error growth
and reaches order-one loss of orthogonality near
$\cond(\mtx M)\approx3\times10^8$, consistent with the
$u^{-1/2}$ stability scale. In contrast, CholQR2, sCholQR3, and
CQRRTO show approximately linear growth, following
$u\,\cond(\mtx M)$ over their respective stability ranges.
The main difference is the conditioning range they can handle:
CholQR2 is limited to about $10^9$ by its unpreconditioned first
pass, sCholQR3 extends the range to about $10^{13}$, and CQRRTO
reaches about $3\times10^{15}$, approaching the $u^{-1}$ scale.

For $C>0$, the CQRRTO error follows
$Cu\,\cond(\mtx M)$ to within a small factor. Thus, increasing the operator error
raises the observed error level proportionally, consistent with
\cref{cor:computed-sketched-preconditioner-stability}.

\paragraph{Loss of orthogonality: shaped operator.}
\Cref{fig:synth-shaped-orth} repeats the experiment for the matrix-free
family with $\kappa_A=1,10^2,10^4,10^6$. 
\begin{figure}[htb]
    \centering
    \includegraphics[width=\textwidth]{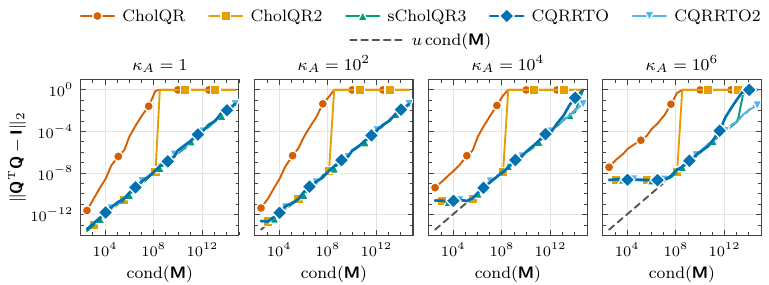}
    \caption{Loss of orthogonality $\|\mtx Q^\trans\mtx Q-\mtx I\|_2$
    against $\cond(\mtx M)$ for the shaped operator
    $\mtx T_1^{-1}\mtx A\mtx T_3^{-1}\mtx W\mtx D\mtx T_2^{-1}$,
    with $\kappa_A=1$, $10^2$, $10^4$, and $10^6$ from left to right.}
    \label{fig:synth-shaped-orth}
\end{figure}

The stabilized methods again show approximately linear error growth
over their respective stability ranges. CholQR2 remains limited by
its first pass, while sCholQR3 and CQRRTO remain accurate over a much
wider conditioning range. Increasing $\kappa_A$ raises the error
plateau at small $\cond(\mtx M)$, reflecting the larger errors in
the operator applications. Above this plateau, the curves remain
close over much of the conditioning range, although additional
deterioration appears at the largest condition numbers, particularly
for $\kappa_A=10^6$.

Unlike the black-box perturbations, these application errors inherit
structure from the factors defining $\mtx M$. The error-shaping bounds account
for this interaction, explaining why larger application errors need
not lead to a proportional increase in the loss of orthogonality.
We compare the two types of bounds next.

\paragraph{Validation of the bounds.}
\Cref{fig:synth-bounds} compares the measured loss of orthogonality
for CholQR and CQRRTO with the first-order estimates from
\cref{thm:cholqr-stability_first_order,thm:pcholqr-stability_first_order}.
We evaluate each estimate in two forms: a shaped version, which
retains the error-shaping operators, and a black-box version, which
replaces them by scalar normwise bounds.

We retain the numerical constants from \cref{sec:theory}, except
that we replace $\gamma_k$ by $\sqrt{k}u/(1-\sqrt{k}u)$, following the well-known square-root rule of thumb for rounding-error accumulation~\cite{Higham2002}.
We choose the free scaling parameters in the error-shaping model
to minimize each estimate.

\begin{figure}[htb]
    \centering
    \includegraphics[width=\textwidth]{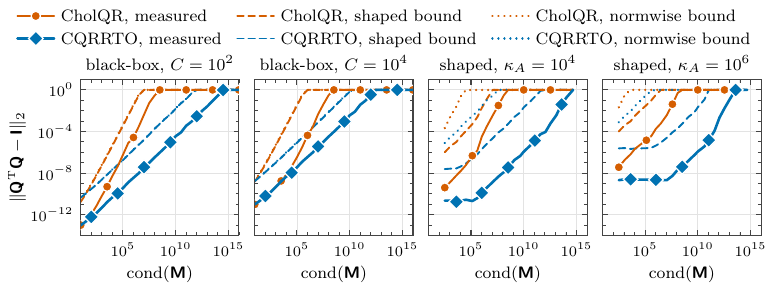}
    \caption{Measured loss of orthogonality for CholQR and CQRRTO
    compared with the first-order estimates from \cref{sec:theory}.
    From left to right: the black-box operator with $C=10^2$ and
    $C=10^4$, and the shaped operator with $\kappa_A=10^4$ and
    $\kappa_A=10^6$. Solid curves show measured errors, dashed curves
    show bounds with error-shaping model, and dotted curves show bounds with
    normwise model.}
    \label{fig:synth-bounds}
\end{figure}

For the black-box operator, the two versions nearly coincide at
both perturbation levels. The error model contains no additional
directional information, so retaining the error-shaping operators
offers little improvement over
\cref{cor:cholqr-blackbox,cor:pcholqr-blackbox}. 

For the shaped operator, the error-shaping estimates are smaller
than the black-box estimates by several orders of magnitude over
much of the conditioning range. They remain conservative but
follow the measured behavior closely.

\paragraph{Realization of the inverse preconditioner.}
In \cref{alg:cqrrt}, we apply the inverse of the triangular
preconditioner $\mtx P$ by back substitution, without forming
$\mtx P^{-1}$. We compare this implementation with three ways
of forming and storing an approximate inverse $\mtx P^{\rm inv}$:
columnwise back substitution in
$\mtx P\mtx P^{\rm inv}=\mtx I$, rowwise substitution in
$\mtx P^{\rm inv}\mtx P=\mtx I$, and blockwise inversion with
LAPACK's \code{TRTRI}. These constructions are equivalent in exact
arithmetic, but their finite-precision behavior differs substantially.

\begin{figure}[htb]
    \centering
    \includegraphics[width=0.6\textwidth]{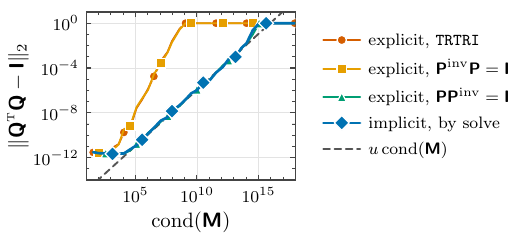}
    \caption{Loss of orthogonality for CQRRTO on the shaped operator
    with different realizations of the inverse preconditioner.}
    \label{fig:synth-precond-inverse}
\end{figure}

In \cref{fig:synth-precond-inverse}, implicit application and
columnwise inverse construction give nearly identical results,
with approximately linear error growth above the initial plateau.
The other two constructions lose accuracy much faster, reaching
order-one loss of orthogonality near $\cond(\mtx M)\approx10^9$,
compared with $10^{15}$--$10^{16}$ for the first two methods.

The distinction is which inverse residual is controlled.
Columnwise back substitution controls
$\mtx P\mtx P^{\rm inv}-\mtx I$, as described in
\cref{rem:explicit-vs-trsm}. This residual determines the
perturbation introduced by forming the inverse:
$\mtx M\mtx P^{\rm inv}
-
\mtx M\mtx P^{-1}
=
\mtx M\mtx P^{-1}
\bigl(\mtx P\mtx P^{\rm inv}-\mtx I\bigr).$
The rowwise and blockwise constructions instead control the
oppositely ordered residual
$\mtx P^{\rm inv}\mtx P-\mtx I$, which need not provide
comparable control when $\mtx P$ is ill-conditioned.
Thus, this demonstration confirms that the stability of CQRRTO depends not only on the choice of
preconditioner, but also on how its inverse action is realized.

\subsubsection{Downstream accuracy and robustness}
We next examine whether improved orthogonalization
stability translates into more accurate and robust downstream
computations from~\cref{subsec:apps}.
The first three tests use the black-box operator
with $C=10^2$. The fourth uses a separate operator construction.

\emph{Least squares.}
We use $\widehat{\mtx R}$ as a right preconditioner for
\cref{eq:lsq} and perform two Richardson refinement steps on the
preconditioned normal equations. We report forward error
$\frac{\|\vct x-\vct x_\star\|_2}
{\|\vct x_\star\|_2+\|\vct x\|_2},$
where $\vct x_\star$ is a solution computed by
Householder QR.

\emph{Orthogonal projection.}
We apply the projector onto $\range(\mtx M)$ using the Q-less
representation in \cref{subsec:apps} and report
$\max_{\vct v}
\frac{\|\mtx\Pi[\vct v]-\mtx\Pi_\star\vct v\|_2}
{\|\vct v\|_2},$
where the maximum is taken over random test vectors and
$\mtx\Pi_\star$ is formed from an explicit orthonormal basis.

\emph{Implicit normal update.}
We test the Woodbury-based computation in \cref{subsec:apps}
with $\mtx V$ the black-box synthetic operator and
$\mtx B =s^2\mtx I$, where $s=\sigma_{\min}(\mtx V)$.
Each Q-less QR method is applied to the augmented operator
$\mtx M=[\mtx V^\trans/s\,\mtx I]^\trans$.
Here, ordinary Q-less CholQR corresponds to the standard evaluation of the Woodbury formula: it evaluates
the capacitance matrix
$\mtx I+\mtx V^\trans\mtx V/s^2$ and computes the Cholesky factorization to applying its inverse.
We report
$\max_{\vct v}
\frac{\|\mtx\Sigma[\vct v]-\mtx\Sigma_\star\vct v\|_2}
{\|\vct v\|_2/s^2+\|\mtx\Sigma[\vct v]\|_2},$
where the maximum is taken over five independent standard
Gaussian test vectors per trial and ten trials, and
$\mtx\Sigma_\star$ is a reference for
$(s^2\mtx I+\mtx V\mtx V^\trans)^{-1}$
computed from the SVD of $\mtx V$.

\emph{Reduced spectral approximation.}
To isolate the effect of basis conditioning on Rayleigh--Ritz
extraction, we construct $\mtx M=\mtx U\mtx D\mtx V^\trans$,
where the columns of $\mtx U$ are the eigenvectors associated
with the $n$ smallest eigenvalues of the one-dimensional
Dirichlet Laplacian $\mtx C$. The remaining factors are chosen
as before, so that $\cond(\mtx M)=\kappa$.
Thus, $\range(\mtx M)$ is an invariant subspace of $\mtx C$,
and the exact Ritz values are $\lambda_1,\ldots,\lambda_n$.
We assemble the reduced matrix
$\mtx H
=
\widehat{\mtx R}^{-\trans}
\mtx M^\trans [\mtx C\,\mtx M[\widehat{\mtx R}^{-1}]]$
using operator applications and triangular solves, without forming
$\mtx Q$. We report the normalized eigenvalue error
$\frac{\max_i|\theta_i-\lambda_i|}
{\max_i|\lambda_i|+\max_i|\theta_i|},$
where $\theta_i$ are the eigenvalues of the computed reduced matrix,
with both $\theta_i$ and $\lambda_i$ ordered increasingly.

\begin{figure}[htb]
    \centering
    \includegraphics[width=\textwidth]{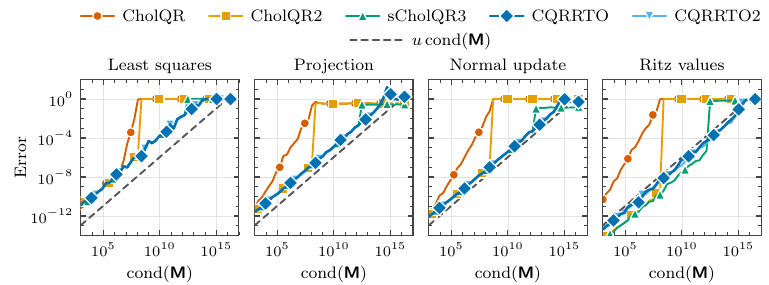}
    \caption{Downstream accuracy against $\cond(\mtx M)$.
    From left to right: least-squares error, orthogonal-projection
    error, implicit normal-update error, and Ritz-value error.
    The first three tests use the black-box operator with $C=10^2$.
    The fourth uses an invariant subspace of the Dirichlet
    Laplacian, with its corresponding eigenvalues as the
    reference Ritz values.}
    \label{fig:synth-downstream}
\end{figure}

The downstream errors follow the main trends observed in the
orthogonality experiments. CholQR loses accuracy first, while
the stabilized methods generally exhibit approximately linear
error growth over their respective stability ranges. CQRRTO
retains useful accuracy over the widest conditioning range. 
These results
show that the improved stability of the orthogonalization
translates into more accurate/robust downstream computations.

\subsection{Least-squares benchmarks}
\label{subsec:ls_benchmarks}

We next compare the Q-less methods with Blendenpik~\cite{Blendenpik2010}
and an unpreconditioned least-squares solver, on realistic operators. CQRRTO2 is not included
because CQRRTO already produces an almost optimal preconditioner. All
sketching-based methods use the same embedding distribution and sketch
dimensions. The configuration of every method is given in
\cref{subsec:compared_algs}.

The benchmarks solve $\min_{\vct x}\|\mtx M\vct x-\vct b\|_2$,
with $\vct b=\mtx M\vct x_{\rm true}+\vct e$.
The entries of $\vct x_{\rm true}$ are independent and uniform
in $[-1,1]$, and $\vct e$ is Gaussian noise scaled to satisfy
$\|\vct e\|_2=10^{-11}\|\mtx M\vct x_{\rm true}\|_2$.

\paragraph{Shift retries.}
In our RandLAPACK implementation, we enable a shift-and-retry
strategy similar to \cite[Algorithm~2]{BindhakPellingSaak2026}:
if a Cholesky factorization fails, a diagonal shift is added
or increased, and the factorization attempt is repeated until success (see \cref{subsec:blocked_gram}).
This improves the robustness of CholQR and CholQR2 on ill-conditioned
problems. We leave its theoretical analysis to future work.

\paragraph{Downstream solver.}
Both benchmarks use conjugate gradients on the preconditioned normal
equations within an iterative-refinement loop, following the approach
of \cite{EpperlyMeierNakatsukasa2024}. Each outer cycle recomputes
$\vct r=\vct b-\mtx M\vct x$ in working precision and restarts
conjugate gradients to compute a correction.

\paragraph{Stopping criteria.}
In both benchmarks, each outer cycle solves the preconditioned normal
equations for the correction, with right-hand side
$\mtx R^{-\trans}\mtx M^\trans\vct r$, by CG started from zero.
This inner solve stops when its CG residual falls to $10^{-4}$ of its
starting value, after $20$ consecutive iterations that reduce it by less
than $0.1\%$ (the best iterate is then kept), or after $500$ iterations.
For the FEM benchmark, the inner solve also stops once its CG residual
falls below $10^{-16}\|\mtx R^{-\trans}\mtx M^\trans\vct b\|_2$, so that cycles
at the roundoff level are not iterated. The outer loop stops
when the relative residual error $\|\vct b - \mtx M\vct x\|_2/\|\vct b\|_2$ reaches $10^{-12}$, fails to improve
over two consecutive cycles, or after $50$ cycles.
For the Toeplitz benchmark, every five inner iterations and after each outer
cycle, we estimate the Karlson--Wald\'en least-squares backward error using
\cite[eq.~(4.2)]{EpperlyMeierNakatsukasa2024} with an independent
sparse embedding of $2n$ rows and $8$ nonzeros per column.
The solve stops as soon as this estimate, normalized by
$\|\mtx M\|_F$, falls below $\sqrt n\,u$; the cost of the checks is included in the reported times.
As safeguards, we also apply the FEM outer stopping criteria.

All benchmark runs use the same
64-core Intel Xeon node; the hardware, the timing protocol and the configuration
of every method are given in \cref{sec:ls_set,subsec:compared_algs}.

\subsubsection{FEM least-squares benchmark}
\label{subsec:results_fem2}
We consider discretized Petrov--Galerkin operators
$\mtx M=\mtx L^{-1}\mtx K\mtx U$ 
for a 2D advection-diffusion-reaction PDE with highly
heterogeneous coefficients. Here, $\mtx K$ is the stiffness matrix,
$\mtx L$ is the Cholesky factor of the corresponding mass matrix,
and $\mtx U$ is a sparse coarse-grid basis. Forward and adjoint
applications are matrix-free, using sparse products and triangular
solves.  

\Cref{fig:benchmarks} (left half) and \cref{tab:fem-observations}
report results for a $75824\times8304$ operator with
$\cond(\mtx M)\approx5\times10^{14}$.
\begin{figure}[htb]
    \centering
    \includegraphics[width=\textwidth]{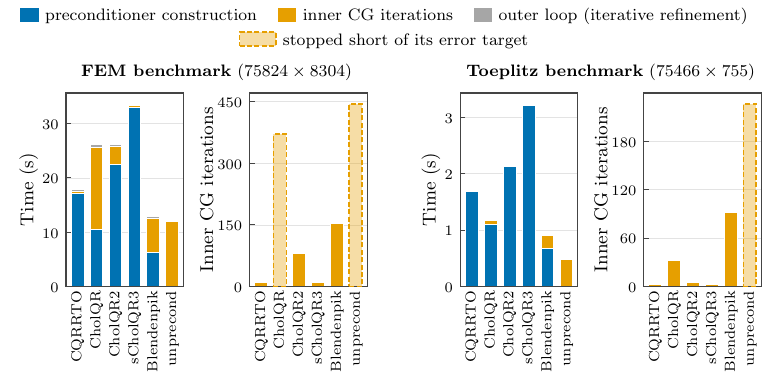}
    \caption{Wall-clock time and inner CG iterations for the FEM benchmark (left half), and for
    the Toeplitz benchmark (right half ). Iteration counts are totaled over all refinement cycles. A lighter
    bar marks a method that stopped without reaching the error target.}
    \label{fig:benchmarks}
\end{figure}
\begin{table}[htb]
\caption{FEM benchmark results. Factorization time is relative to
CQRRTO's. 
``Floor'' indicates a residual error within 5\% of the optimal value. Orthogonality loss is $\|{\widehat{\mtx Q}}^\trans\widehat{\mtx Q}-\mtx I\|_F/\sqrt n$,
where $\widehat{\mtx Q}=\mtx M[\mtx I]\mtx R^{-1}$ is formed only for this diagnostic.}
\label{tab:fem-observations}
\centering
\footnotesize
\setlength{\tabcolsep}{4pt}
\begin{adjustbox}{max width=\textwidth}
\begin{tabular}{@{}lrrrrr@{}}
\toprule
Method & \shortstack[r]{Shift\\retries} & \shortstack[r]{Orthogonality\\loss} & \shortstack[r]{Inner CG\\iterations} & \shortstack[r]{Factorization\\time} & \shortstack[r]{Residual \\error} \\
\midrule
CQRRTO           & $0$   & $2.6\times10^{-4}$ & $9$   & $1.00$ & floor \\
sCholQR3         & $0$   & $9.2\times10^{-5}$ & $9$   & $1.92$ & floor \\
CholQR2          & $2$   & $0.11$             & $82$  & $1.31$ & floor \\
CholQR & $1$   & $0.91$             & $371$ & $0.61$ & $1.3\times10^{-11}$ \\
Blendenpik       & n/a   & $0.71$             & $153$ & $0.36$ & floor \\
unpreconditioned & n/a & n/a           & $445$ & n/a    & $3.1\times10^{-3}$ \\
\bottomrule
\end{tabular}
\end{adjustbox}
\end{table}
CQRRTO requires no shift and attains an orthogonality loss consistent
with the $u\cond(\mtx M)$ scaling in \cref{tab:stability-summary}.
It reaches the error floor in $9$ inner CG iterations.
sCholQR3 reaches the same accuracy in the same number of iterations
but takes $1.92$ times as long to construct the preconditioner.
CholQR2 requires two shift retries and $82$ iterations.
CholQR has a cheaper factorization but requires $371$ iterations
and stagnates above the residual error floor, making its total time
more than $40\%$ higher than CQRRTO's.
Blendenpik also reaches the floor, but requires $153$ iterations. The results suggest that CQRRTO offers a total runtime advantage
over Blendenpik when the preconditioner is reused for as few as
two or three right-hand sides.
The unpreconditioned solver stagnates at a relative residual error
of $3.1\times10^{-3}$.

Tests on two additional operators with different sizes and condition
numbers are reported in \cref{subsec:extra_cases} and show similar trends.

\subsubsection{Prolate Toeplitz least-squares benchmark}
\label{subsec:results_toeplitz}
We consider the Toeplitz least-squares problem 
with the regularized operator $\mtx M=[\mtx T^\trans \;\lambda\mtx I]^\trans$,
where $\mtx T$ is a prolate Toeplitz operator with bandwidth
parameter $\omega=0.14$ and $\lambda=10^{-10}$.
Forward and adjoint applications of $\mtx T$ are matrix-free,
using FFTs of a circulant embedding.

We consider right-hand
sides of the form $\vct b = [\vct t^\trans \; \vct 0]^\trans$. \Cref{fig:benchmarks} (right half) and
\cref{tab:toeplitz-observations} report results for a
$75466\times755$ Toeplitz operator.

\begin{table}[htb]
\caption{Toeplitz benchmark results.
Factorization time is relative to
CQRRTO's. 
Orthogonality loss is $\|{\widehat{\mtx Q}}^\trans \widehat{\mtx Q}-\mtx I\|_F/\sqrt n$,
where $\widehat{\mtx Q}=\mtx M[\mtx I]\mtx R^{-1}$ is formed only for this diagnostic.
Backward error is estimated from the sketch at termination and normalized
by $\|\mtx M\|_F$.
``Floor'' indicates an estimated backward error on the order of $10^{-15}$.}
\label{tab:toeplitz-observations}
\centering
\footnotesize
\setlength{\tabcolsep}{4pt}
\begin{adjustbox}{max width=\textwidth}
\begin{tabular}{@{}lrrrrr@{}}
\toprule
Method & \shortstack[r]{Shift\\retries} & \shortstack[r]{Orthogonality\\loss} & \shortstack[r]{Inner CG\\iterations} & \shortstack[r]{Factorization\\time} & \shortstack[r]{Backward\\error} \\
\midrule
CQRRTO           & $0$ & $2.6\times10^{-6}$ & $3$   & $1.00$ & floor \\
sCholQR3         & $0$ & $1.4\times10^{-6}$ & $3$   & $1.90$ & floor \\
CholQR2          & $1$ & $9.8\times10^{-5}$ & $5$   & $1.26$ & floor \\
CholQR           & $1$ & $0.84$             & $32$  & $0.65$ & floor \\
Blendenpik       & n/a & $0.72$             & $92$  & $0.40$ & floor \\
unpreconditioned & n/a & n/a                & $226$ & n/a    & $1.2\times10^{-12}$ \\
\bottomrule
\end{tabular}
\end{adjustbox}
\end{table}

All preconditioned methods reach the backward-error target, differing mainly in cost. Only CholQR
and CholQR2 require shift retries. Factorization dominates
the total time of the Q-less methods. CQRRTO's factorization
takes $0.53$ times as long as sCholQR3's and $0.79$ times as
long as CholQR2's.
CholQR and Blendenpik have cheaper factorizations but require
$32$ and $92$ iterations,
respectively. The unpreconditioned solver has the shortest
total time but stagnates before reaching the target.
Tests on three additional operator sizes in \cref{subsec:extra_cases}
show similar trends.

\section{Concluding remarks}\label{sec:conclusion}
We developed several RandNLA-based stabilized Q-less QR methods for tall matrix-free
operators. Our main algorithm, CQRRTO, uses a randomized sketch
to construct a triangular preconditioner for a single CholQR pass.
It achieves
$\mathcal O(u\,\cond(\mtx M))$ loss of orthogonality up to the
$\cond(\mtx M)\lesssim u^{-1}$ scale, and it exhibits greater numerical robustness than Q-less CholQR2 and sCholQR3, while using fewer operator
applications. We provide
a general, rigorous numerical analysis that accounts for
the structure of errors in forward and adjoint operator
applications. 
Synthetic experiments and least-squares benchmarks support
the analysis and demonstrate the benefits for downstream
computations. More broadly, our results advance the understanding
of numerical stability in matrix-free linear algebra and highlight
randomized sketching as a tool for numerical stabilization.

\section{Acknowledgments}
We acknowledge DARPA, NSF, the DOE Competitive Portfolios grant, and the DOE SciGPT grant for partial
support of this work.

The RandLAPACK experiments used the ISAAC Next Generation cluster at the University of Tennessee, Knoxville~\cite{ISAACNG}.

\paragraph{Declaration on the use of artificial intelligence.}
OpenAI's Codex assisted developing and reviewing theoretical results, and with designing the synthetic experiments.
Anthropic's Claude models assisted with planning, implementing, and testing
the RandLAPACK benchmarks.
The authors reviewed all AI-assisted content and tested the code.

\bibliographystyle{abbrv}
\bibliography{references/references}

\appendix
\section{Implementation details}
\label{sec:impl}

The Q-less CholQR algorithms developed in this work are implemented in
RandLAPACK, with sketching provided by RandBLAS~\cite{RandBLAS}.
This section describes the implementation choices specific to the proposed
methods, including the blocked evaluation of Gram matrices and the
construction and application of randomized preconditioners.

\subsection{Software}

RandLAPACK and RandBLAS are open-source C++ libraries for high-performance randomized numerical linear algebra, originally developed as part of the BALLISTIC project~\cite{BALLISTIC}.
RandLAPACK provides matrix factorizations and solvers, while RandBLAS provides functionality for sketching operator generation and application.
Matrix-free computation in RandLAPACK is supported through its \textit{linear operator} interface: an operator is any C++ type that implements the forward and adjoint applications of \cref{eq:linop-call}, and every Q-less QR method of \cref{subsec:ls_benchmarks} runs unchanged on any such type.
Both libraries dispatch dense kernels through BLAS\texttt{++} and LAPACK\texttt{++}~\cite{gates2022portable}, which serve as portability layers between vendor-optimized BLAS and LAPACK (such as Intel MKL~\cite{IntelMKL}, AMD AOCL~\cite{AMDAOCL}, OpenBLAS~\cite{OpenBLAS}, etc.) and C\texttt{++}.
RandBLAS uses counter-based random-number generators sourced from Random123~\cite{SalmonMoraesDrorShaw_Random123}.
The Q-less QR implementations and the benchmark drivers used below are distributed with RandLAPACK.

\subsection{Implementation basics}
\label{subsec:impl_basics}

A linear operator in RandLAPACK represents a matrix
$\mtx M\in\R^{m\times n}$ through its action on other matrices, without
requiring $\mtx M$ to be formed or stored explicitly, through
\code{GEMM}-style operations of the form
\begin{equation}
\label{eq:linop-call}
\mtx C
\leftarrow
\alpha\,\mtx M [\mtx B]
+
\beta\,\mtx C.
\end{equation}

The implementations rely primarily on Level-3 BLAS and LAPACK
kernels~\cite{BlackfordEtAl2002_BLAS,AndersonEtAl1999_LAPACK}.
The kernels relevant to this work are \code{GEMM} and \code{SYRK} for
dense matrix products and Gram-matrix computations, \code{TRSM} for
applications of triangular inverse preconditioners, \code{GEQRF} for
the QR factorization of randomized sketches, and \code{POTRF} for
Cholesky factorization. Sparse sketching operations are provided by
RandBLAS. In this work, we omit the precision prefix from BLAS
and LAPACK routine names; for example, \code{TRSM} denotes
\code{DTRSM} in double precision.

\subsection{Implementation of Q-less Cholesky QR}
\label{subsec:blocked_gram}

The linear-operator abstraction allows the Q-less QR algorithms to
operate directly on matrix-free representations of $\mtx M$.
The central computational primitive in these algorithms is the Gram
matrix
\begin{equation}
\label{eq:gram}
\mtx G
=
\mtx M^\trans[\mtx M[\mtx I]]
\in\R^{n\times n}.
\end{equation}
For an explicitly stored matrix $\mtx M$, the product
$\mtx M^\trans\mtx M$ can be computed directly using the BLAS-3
routine \code{SYRK}. For a linear operator, we instead use the
available realizations of $\mtx M[\cdot]$ and
$\mtx M^\trans[\cdot]$.

To avoid storing the full intermediate
$\mtx M[\mtx I]\in\R^{m\times n}$, we compute $\mtx G$ in column
blocks. For a block of indices $\vct j$ of size at most $b$, we compute
\[
\mtx W=\mtx M[\mtx I(:,\vct j)],
\qquad
\mtx G(:,\vct j)=\mtx M^\trans[\mtx W],
\]
and repeat over all columns of $\mtx G$. This limits the peak memory
overhead to $O(mb)$. In the least-squares benchmarks of
\cref{subsec:ls_benchmarks} we use $b=256$.

In \code{PCholQR} we define the inverse-preconditioner operator
$\mtx P^{\rm inv}[\cdot]$, $\mtx P^{\rm inv\trans}[\cdot]$ through the solution of linear systems with $\mtx P$ and $\mtx P^\trans$
with \code{TRSM}.

\paragraph{Shift option.}
In the RandLAPACK benchmarks of \cref{subsec:ls_benchmarks},
we enable shift-and-retry for CholQR, CholQR2, CQRRTO and the second
and third passes of sCholQR3. If \code{POTRF} fails, we retry
Cholesky on
\[
\mtx G+s_k\mtx I,
\]
where $s_1= u \mathrm{trace}(\mtx G)$ and $s_k=10s_{k-1}$ for $k\ge2$, until the
factorization succeeds.
CQRRTO required no retries in any experiment reported here.

\subsection{CQRRTO implementation}
\label{subsec:cqrrt-implementation}

 We next describe the implementation of the two phases of \cref{alg:cqrrt}.

We use a SASO $\mtx S\in\R^{d\times m}$ with $\zeta$ nonzeros per column, as
described in \cref{sec:precond_via_sk}. In our
experiments, $d=2n$ already provides an effective preconditioner, with
$\zeta=4$ in the least-squares benchmarks of \cref{subsec:ls_benchmarks} and
$\zeta=8$ in the synthetic study of \cref{subsec:synthetic}. The sketch is assembled column-blockwise
\[
\mtx W = \mtx M[\mtx I(:,\vct j)], \quad  \mtx M^{\mathrm{sk}}(:,\vct j)
=
\mtx S\mtx W,
\]
where $ \vct j$ is a block of indices of length at most $b$. The products $\mtx S\mtx W$ are computed with a RandBLAS~\cite{RandBLAS} sparse sketching kernel and in total require
$\zeta m n$ flops and
$\mathcal O(m(b+\zeta)+dn)$ peak storage. The QR factorization 
of the resulting $d\times n$ sketch for computing the
preconditioner $\mtx P=\mtx R^{\mathrm{sk}}$ is realized with \code{GEQRF} and costs $\mathcal O(dn^2)$ flops.

Then CQRRTO applies Q-less CholQR to the preconditioned operator $\mtx M^{\mathrm{pre}}[\cdot]
=
\mtx M[\mtx P^{\rm inv}[\cdot]],$ with $\mtx P=\mtx R^{\mathrm{sk}}$.
Its Gram matrix is computed using the implementation from~\cref{subsec:blocked_gram}, requiring
$2nT_{\mathrm{mv}}+\mathcal O(n^3)$ flops and
$\mathcal O(mb+n^2)$ peak storage. The $\mathcal O(n^3)$ term accounts
for applications of the inverse preconditioner.

The Gram matrix is factorized by the Cholesky routine \code{POTRF}
at a cost of $n^3/3$ flops. The analysis in \cref{sec:theory}
assumes that $\mtx G$ is explicitly symmetrized beforehand.
In the RandLAPACK runs of \cref{subsec:ls_benchmarks},
\code{POTRF} reads one triangle of the computed $\mtx G$
and implicitly treats the other as its transpose.
Finally, the triangular factors are accumulated as
$\mtx R=\mtx R^{\mathrm{chol}}\mtx P$ using \code{TRMM}.

\subsection{List of methods}
\label{subsec:compared_algs}
This subsection lists the algorithms used by the least-squares benchmarks of
\cref{subsec:ls_benchmarks}. 
Each Q-less QR method returns only its triangular factor
$\mtx R$, which the downstream solver uses as a right preconditioner. Every
method that uses column blocking uses $b=256$, and every sketching-based method
uses a sparse embedding with $d=2n$ rows and $\zeta=4$ nonzeros per column.

\begin{itemize}
\item \textbf{CQRRTO} (\cref{alg:cqrrt}; implementation in
\cref{subsec:cqrrt-implementation}; analysis in \cref{subsec:stab_CQRRTO}).
One preconditioned CholQR pass with $\mtx P=\mtx R^{\mathrm{sk}}$ from the
sketch, the inverse preconditioner applied through triangular solves as in
\cref{alg:pcholqr}. No shift is used.
\item \textbf{CholQR} (\cref{alg:cholqr}; analysis in
\cref{subsec:main_thm}). A single unshifted pass.
\item \textbf{CholQR2} (\cref{alg:pcholqr}; analysis in
\cref{subsec:stab_cholqr2}). Two unshifted passes, the first-pass factor
preconditioning the second.
\item \textbf{sCholQR3} (\cref{alg:scholqr3}; analysis in
\cref{subsec:stab_scholqr3}). A first pass shifted by
$s = 11\,u\,n\,\|\mtx M\|_{\mathrm F}^2$, followed by two unshifted
preconditioned passes.
\item \textbf{Blendenpik}~\cite{Blendenpik2010}. A sketch-and-precondition
least-squares solver, included as an independent baseline; it is not a Q-less QR
method. It uses the same sparse embedding as CQRRTO and takes the Householder QR
factor of the sketch as the preconditioner.  
\item \textbf{unpreconditioned}. The iterative solver applied to the raw
operator, as a reference point without factorization. Its iterations omit the
two triangular solves with $\mtx R$ and are therefore cheaper than those of the
preconditioned methods.
\end{itemize}

\section{Benchmark configuration and additional results}\label{sec:ls_set}

\subsection{Hardware and software setup}\label{subsec:exp_set}
\paragraph{Platform.}
All RandLAPACK benchmark runs were executed on the ISAAC Next Generation (ISAAC-NG) cluster~\cite{ISAACNG} of the University of Tennessee, Knoxville, on a dual-socket Intel Xeon Gold 6430 (Sapphire Rapids) node from its large-memory partition; \cref{table:cpu_config} lists the configuration.
Every run reported here was executed on the same physical node of that partition, allocated all $64$ of its cores with hyper-threading disabled, so wall-clock times are comparable across problems and methods.
The software stack is the one described in \cref{sec:impl}; \cref{table:cpu_config} records the versions.

\paragraph{Threading and timing protocol.}
Each run uses 64 OpenMP threads, one per physical core, set through \code{OMP\_NUM\_THREADS} and inherited by MKL, and is launched with \code{numactl --interleave=all} so that memory is spread evenly over the two sockets.
The triangular solves of the inner iteration (BLAS-2) and the FFTs are capped at 16 threads, and the triangular solves at 8 threads when $n \le 4000$; these caps were the fastest settings measured on this hardware.
A benchmark run starts with an untimed warm-up at the problem dimensions (a Q-less factorization, a few iterations of the solver, and a sketch with its QR factorization), which initializes the thread pools and touches all large allocations.
Each method is then executed three times in a row.
The reported run of a method is the one with the smallest total wall-clock time (factorization plus solve), and every plotted quantity of that method, including accuracy metrics and iteration counts, comes from that single run.
The sketching-based methods draw a fresh sketch in each run; all other methods are deterministic and reproduce their accuracy metrics and iteration counts bit for bit across the three runs.
Each run records the host name, CPU model, thread count, environment settings, and RandLAPACK commit alongside its results.

\begin{table}[H]
\small\def\arraystretch{1.3}
\caption[Hardware and software configuration of the benchmark node.]{\small Hardware and software configuration of the benchmark node (ISAAC-NG, University of Tennessee, Knoxville).}
\label{table:cpu_config}
\centering
\begin{adjustbox}{max width=\textwidth}
\begin{tabular}{|cc||c|}
\hline
\multicolumn{2}{|c||}{\textbf{}} & \textbf{Intel Xeon Gold 6430 (2$\times$, Sapphire Rapids)} \\ \hline \hline
\multicolumn{2}{|c||}{\textbf{Cores per socket}} & 32 (hyper-threading disabled) \\ \hline
\multicolumn{2}{|c||}{\textbf{Threads used in experiments}} & 64 (one per core, all cores of the node) \\ \hline
\multicolumn{1}{|c|}{\multirow{2}{*}{\textbf{Clock speed}}} & \textbf{Base} & 2.10 GHz \\ \cline{2-3}
\multicolumn{1}{|c|}{} & \textbf{Max turbo} & 3.40 GHz \\ \hline
\multicolumn{1}{|c|}{\multirow{3}{*}{\textbf{Cache sizes}}} & \textbf{L1 (per core)} & 80 KB (32 KB instruction + 48 KB data) \\ \cline{2-3}
\multicolumn{1}{|c|}{} & \textbf{L2 (per core)} & 2 MB \\ \cline{2-3}
\multicolumn{1}{|c|}{} & \textbf{L3 (per socket, shared)} & 60 MB \\ \hline
\multicolumn{2}{|c||}{\textbf{RAM}} & 2016 GB \\ \hline
\multicolumn{2}{|c||}{\textbf{FP64 peak performance}} & 2.15 TFLOP/s per socket (theoretical, base clock) \\ \hline
\multicolumn{2}{|c||}{\textbf{BLAS, LAPACK, FFT}} & Intel oneMKL 2025.3 (ILP64, GNU threading layer) \\ \hline
\multicolumn{2}{|c||}{\textbf{Compiler}} & GCC 13.4.0, \code{-O3} (CMake \code{Release}) \\ \hline
\multicolumn{2}{|c||}{\textbf{OpenMP runtime}} & GNU libgomp (GCC 13.4.0) \\ \hline
\multicolumn{2}{|c||}{\textbf{CMake}} & 3.31.8 \\ \hline
\multicolumn{2}{|c||}{\textbf{RandLAPACK}} & commit \code{ebe383f}, including the benchmark drivers \\ \hline
\multicolumn{2}{|c||}{\textbf{OS}} & Red Hat Enterprise Linux 8.10, kernel 4.18.0-553 \\ \hline
\end{tabular}
\end{adjustbox}
\end{table}

\subsection{Additional benchmark cases}\label{subsec:extra_cases}

\Cref{subsec:results_fem2,subsec:results_toeplitz} report one representative
problem per benchmark. The remaining cases from the same campaign are collected
here. Each figure shows, from left to right, wall-clock time stacked as in
\cref{fig:benchmarks}, inner CG iterations totaled over all refinement cycles,
orthogonality loss, and the error the stopping rule targets (data error for FEM,
backward error for Toeplitz, its target $\sqrt n\,u$ dashed), all defined as in
\cref{tab:fem-observations,tab:toeplitz-observations}. A lighter bar with a dashed outline marks a method
that stopped without reaching the error target. The unpreconditioned solver forms no
preconditioner, so it has no orthogonality-loss bar. In every case, CholQR and CholQR2 needed one
Cholesky shift retry each, sCholQR3 needed one only on the $301864\times33024$ FEM operator, and CQRRTO needed none.

\paragraph{FEM benchmark.}
\Cref{fig:fem2_small,fig:fem2_large} report results for the additional advection–diffusion–reaction FEM operators. CholQR and the unpreconditioned solver stop above the data-error floor.

\begin{figure}[H]
    \centering
    \includegraphics[width=\textwidth]{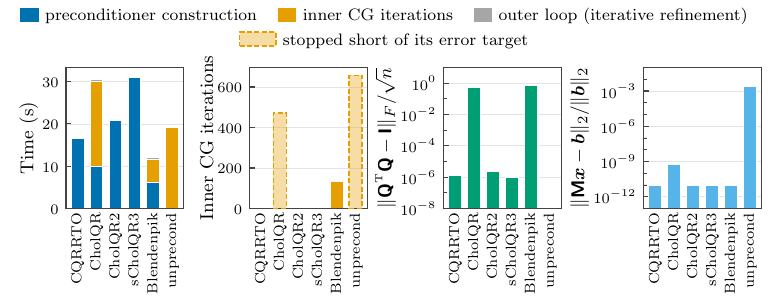}
    \caption{FEM benchmark for a $75466\times8256$ operator with
    $\cond(\mtx M)\approx5\times10^{11}$. }
    \label{fig:fem2_small}
\end{figure}

\begin{figure}[H]
    \centering
    \includegraphics[width=\textwidth]{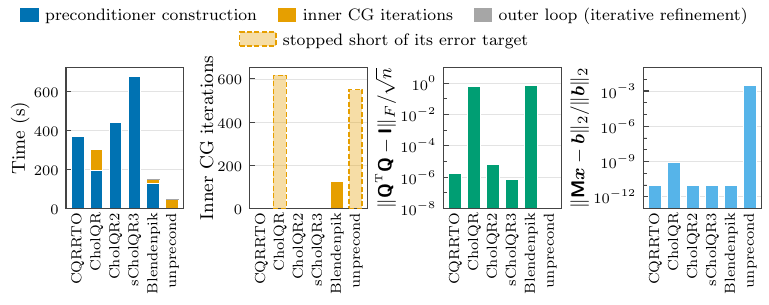}
    \caption{FEM benchmark for a $301864\times33024$ operator with
     $\cond(\mtx M)\approx5\times10^{11}$.}
    \label{fig:fem2_large}
\end{figure}

\paragraph{Toeplitz benchmark.}
\Cref{fig:toeplitz_16000,fig:toeplitz_8256,fig:toeplitz_301864} cover experiments for the additional Toeplitz operators of different sizes. Only the unpreconditioned solver stops short of the backward-error target.

\begin{figure}[H]
    \centering
    \includegraphics[width=\textwidth]{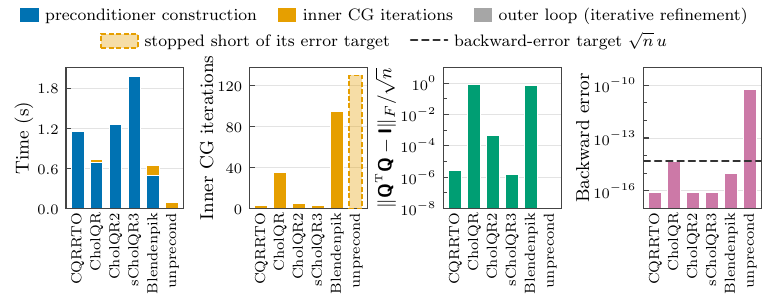}
    \caption{Toeplitz benchmark for a $16000\times2000$ operator. }
    \label{fig:toeplitz_16000}
\end{figure}

\begin{figure}[H]
    \centering
    \includegraphics[width=\textwidth]{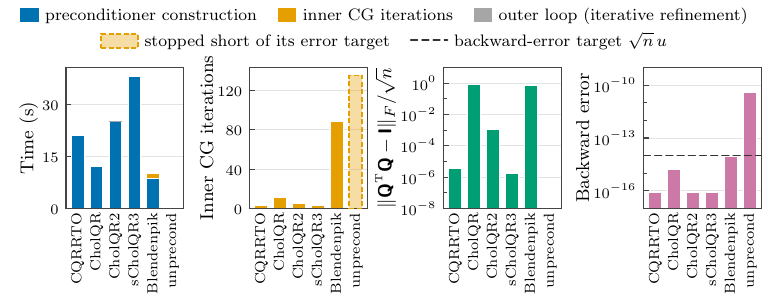}
    \caption{Toeplitz benchmark for a $75466\times8256$ operator. }
    \label{fig:toeplitz_8256}
\end{figure}

\begin{figure}[H]
    \centering
    \includegraphics[width=\textwidth]{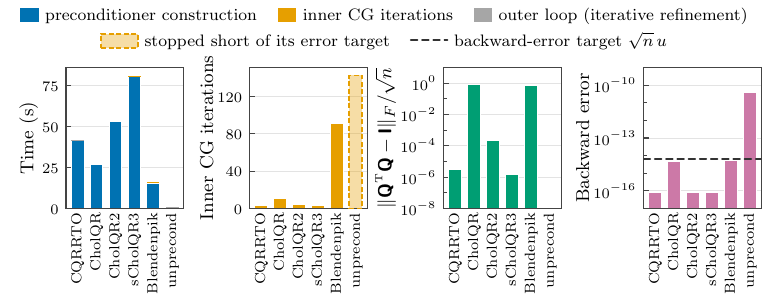}
    \caption{Toeplitz benchmark for a $301864\times3019$ operator.}
    \label{fig:toeplitz_301864}
\end{figure}

\section{Extended Results and Proofs}\label{app:proofs}

This appendix collects the full theorem statements and proofs of the results in \cref{sec:theory}.
Throughout the preconditioned and multipass proofs, inverse actions
satisfy Assumption~\ref{as:inverse-preconditioner}, and products used to
accumulate triangular factors are interpreted in exact arithmetic,
as in \cref{thm:pcholqr-stability,rem:accumulated-product}.
The shifted-pass analysis likewise treats $\widehat{\mtx G}+s\mtx I$
as the exact matrix supplied to Cholesky.

\subsection{Forward error of preconditioned operator}
\label{app:preconditioned-forward-proof}

\begin{proof}[Proof of Proposition~\ref{prop:preconditioned-forward-shapers}]
Write
$\mtx P^{\rm inv}[\mtx X]
=\mtx P^{-1}(\mtx X+\mtx E_P)$.
Applying \cref{as:inverse-preconditioner} columnwise and using
$\gamma_n\cond_F(\mtx P)\le 1/2$ gives
\begin{align*}
\|\mtx E_P\|_F
&\le
\frac{\gamma_n}{1-\gamma_n\cond_F(\mtx P)}
\max_{\|\vct x\|_2=1}
\bigl\|\,|\mtx P|\,|\mtx P^{-1}\vct x|\,\bigr\|_2
\|\mtx X\|_F
\\
&\le
2\gamma_n
\min\left\{
\cond_F(\mtx P),
\bigl\|\,|\mtx P|\,|\mtx P^{-1}|\,\bigr\|_2
\right\}
\|\mtx X\|_F.
\end{align*}
By \cref{eq:comp-matvec-M},
\begin{align*}
\mtx M^{\rm pre}[\mtx X]-\mtx M^{\rm pre}\mtx X
&=
\mtx M^{\rm pre}\mtx E_P
+\mtx\Delta_{\rm out}^{M}\mtx E_M,
\\
\|\mtx E_M\|_F
&\le
u\|\mtx\Delta_{\rm in}^{M}
\mtx P^{-1}(\mtx X+\mtx E_P)\|_F
\\
&\le
u\|\mtx\Delta_{\rm in}^{M}\mtx P^{-1}\mtx X\|_F
+\mathcal O(u^2)\|\mtx X\|_F,
\end{align*}
so stacking the two error bounds gives the stated shapers.
\end{proof}
\subsection{Stability of Q-less CholQR}\label{app:proofs_cholqr}

\begin{theorem}[Stability of Q-less CholQR]
\label{thm:cholqr-stability}
Let $\mtx M\in\R^{m\times n}$ have full column rank and suppose that it
satisfies Definition~\ref{def:operator-model}. Let
\[
\mtx M=\mtx Q\mtx R
\]
denote its exact QR factorization, with $\mtx R$ having positive
diagonal. Define
\begin{equation}
\label{eq:cholqr-operator-error-full}
\begin{aligned}
u_M
&:={}
u
\frac{\|\mtx Q^\trans\mtx\Delta_{\rm out}^{M}\|_2}
     {\|\mtx M\|_F}
\|\mtx\Delta_{\rm in}^{M}\|_F
+
u
\|\mtx R^{-\trans}\mtx\Delta_{\rm out}^{M^*}\|_2\|\mtx\Delta_{\rm in}^{M^*}\mtx Q\|_2\\
&+ u^2
\|\mtx R^{-\trans}\mtx\Delta_{\rm out}^{M^*}\|_2
\|\mtx\Delta_{\rm in}^{M^*}\|_2
\|\mtx\Delta_{\rm in}^{M}\|_F
\frac{
\|\mtx\Delta_{\rm out}^{M}\|_2
}{
\|\mtx M\|_F
},
\end{aligned}
\end{equation}
Consider \cref{alg:cholqr} and set
\[
\widehat{\mtx Q}
:=
\mtx M\widehat{\mtx R}^{-1}.
\]
If, in addition to the roundoff regime described by
Assumption~\ref{as:roundoff},
\begin{equation}
\label{eq:cholqr-smallness-full}
1.05\,u_M\cond_F(\mtx M)
+
1.5\,\gamma_{n+1}\cond_F(\mtx M)^2
\le0.2, \quad 
1.6\,n
\gamma_{n+1}\cond_F(\mtx M)^2
\le1,
\end{equation}
then the Cholesky factorization succeeds, $\widehat{\mtx R}$ is nonsingular and
\begin{equation}
\label{eq:cholqr-Q-bound-full}
\boxed{
\left\|
\widehat{\mtx Q}^{\trans}\widehat{\mtx Q}-\mtx I
\right\|_F
\le
1.25\,u_M\cond_F(\mtx M)
+
1.75\,\gamma_{n+1}\cond_F(\mtx M)^2 .
}
\end{equation}
\end{theorem}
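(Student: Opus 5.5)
The plan is to write the computed Gram matrix as $\widehat{\mtx G}=\mtx M^\trans\mtx M+\mtx E_G$, pass it through Cholesky via \cref{eq:comp-chol-error}, and transfer the resulting perturbation to $\widehat{\mtx Q}=\mtx M\widehat{\mtx R}^{-1}$ by a congruence with $\mtx R^{-1}$.

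\textbf{Step 1: the Gram error.} Set $\widehat{\mtx W}=\mtx M[\mtx I]=\mtx M+\mtx\Delta_{\rm out}^{M}\mtx E$ with $\|\mtx E\|_F\le u\|\mtx\Delta_{\rm in}^{M}\|_F$. Then
\[
\widehat{\mtx G}=\mtx M^\trans[\widehat{\mtx W}]=\mtx M^\trans\mtx M+\mtx M^\trans\mtx\Delta_{\rm out}^{M}\mtx E+\mtx\Delta_{\rm out}^{M^*}\mtx E^*,
\qquad
\|\mtx E^*\|_F\le u\|\mtx\Delta_{\rm in}^{M^*}\widehat{\mtx W}\|_F .
\]
Symmetrization can only decrease the Frobenius norm of the error after congruence, so it suffices to bound the unsymmetrized error. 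The relevant measure is $\mtx F:=\mtx R^{-\trans}\mtx E_G\mtx R^{-1}$.

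\textbf{Step 2: bound $\|\mtx F\|_F$ term by term.} For the first term, $\mtx R^{-\trans}\mtx M^\trans=\mtx Q^\trans$, so
\[
\|\mtx Q^\trans\mtx\Delta_{\rm out}^{M}\mtx E\mtx R^{-1}\|_F\le \|\mtx Q^\trans\mtx\Delta_{\rm out}^{M}\|_2\,u\|\mtx\Delta_{\rm in}^{M}\|_F\,\|\mtx R^{-1}\|_2 .
\]
Since $\|\mtx R^{-1}\|_2=\cond_F(\mtx M)/\|\mtx M\|_F$, this equals $(\text{first term of }u_M)\cdot\cond_F(\mtx M)$. For the second term,
\[
\|\mtx R^{-\trans}\mtx\Delta_{\rm out}^{M^*}\mtx E^*\mtx R^{-1}\|_F\le\|\mtx R^{-\trans}\mtx\Delta_{\rm out}^{M^*}\|_2\,\|\mtx E^*\mtx R^{-1}\|_F .
\]
Because the adjoint model is columnwise, I can apply it to $\widehat{\mtx W}\mtx R^{-1}=\mtx Q+\mtx\Delta_{\rm out}^{M}\mtx E\mtx R^{-1}$. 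Equivalently, I reinterpret the error as acting on the input block $\widehat{\mtx W}$ and use linearity of the bound under right multiplication. This is the delicate point: the model only bounds $\|\mtx E^*\|_F$, not $\|\mtx E^*\mtx R^{-1}\|_F$. I would handle it by assuming the routine acts columnwise, so that the model holds for any input $\mtx X$. Then $\mtx E^*\mtx R^{-1}$ can be identified with the error for input $\widehat{\mtx W}\mtx R^{-1}$, up to the linearity convention implicit in \cref{def:operator-model}. This gives
\[
u\|\mtx\Delta_{\rm in}^{M^*}\mtx Q\|_2\sqrt{n}\ \text{(or a Frobenius analogue)}+u^2\|\mtx\Delta_{\rm in}^{M^*}\|_2\|\mtx\Delta_{\rm out}^{M}\|_2\|\mtx\Delta_{\rm in}^{M}\|_F\|\mtx R^{-1}\|_2,
\]
which reproduces the second and third terms of $u_M$, times $\cond_F(\mtx M)$ after using $\|\mathbf{Q}\|_F$-type factors. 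I expect this bookkeeping, in particular getting exactly the norms appearing in \cref{eq:cholqr-operator-error-full}, to be the main obstacle. I would try to absorb the $\sqrt n$ into $\cond_F$ versus $\cond_2$ slack. The outcome is $\|\mtx F\|_F\le u_M\cond_F(\mtx M)$.

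\textbf{Step 3: Cholesky success and the final bound.} From the first condition of \cref{eq:cholqr-smallness-full}, $\|\mtx F\|_2\le 0.2$. Hence $\widehat{\mtx G}$ is positive definite with $\cond(\widehat{\mtx G})\le 1.5\cond(\mtx M)^2$, and the second condition gives $1.01n\cond(\widehat{\mtx G})\le1/\gamma_{n+1}$, so Cholesky succeeds by \cref{as:roundoff}. For the Cholesky error,
\[
\|\,|\widehat{\mtx R}^{\rm chol}|^\trans|\widehat{\mtx R}^{\rm chol}|\,\|_F\le\|\widehat{\mtx R}^{\rm chol}\|_F^2\approx\|\mtx M\|_F^2(1+O(0.2)),
\]
so $\|\mtx R^{-\trans}\mtx E^{\rm chol}\mtx R^{-1}\|_F\le 1.2\gamma_{n+1}\cond_F(\mtx M)^2$. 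Now
\[
\widehat{\mtx R}^\trans\widehat{\mtx R}=\mtx R^\trans(\mtx I+\mtx F+\mtx F^{\rm c})\mtx R ,
\]
where $\mtx F^{\rm c}:=\mtx R^{-\trans}\mtx E^{\rm chol}\mtx R^{-1}$. With $\mtx K=\widehat{\mtx R}\mtx R^{-1}$ this reads $\mtx K^\trans\mtx K=\mtx I+\mtx H$, and
\[
\widehat{\mtx Q}^\trans\widehat{\mtx Q}-\mtx I=\mtx K^{-\trans}\mtx K^{-1}-\mtx I=-(\mtx I+\mtx H)^{-1}\mtx H .
\]
Therefore the loss is at most $\|\mtx H\|_F/(1-\|\mtx H\|_2)$, and $\|\mtx H\|_2\le0.2$ gives the factor $1.25$. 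The $\gamma$ constant $1.75\approx1.25\cdot1.4$ comes from a careful bound on $\|\widehat{\mtx R}^{\rm chol}\|_F^2/\|\mtx M\|_F^2$. Nonsingularity of $\widehat{\mtx R}$ follows from $\|\mtx H\|_2<1$.
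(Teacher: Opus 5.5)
Your overall structure matches the paper's proof: the Gram error, congruence with $\mtx R^{-1}$, positive definiteness of $\widehat{\mtx G}$, the Cholesky backward error, and the final $\|\mtx H\|_F/(1-\|\mtx H\|_2)$ estimate. However, Step 2 has a genuine gap, at exactly the point you flagged.

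Definition~\ref{def:operator-model} constrains only the error $\mtx E^*$ of the computation actually performed, namely $\mtx M^\trans[\widehat{\mtx W}]$. Floating-point routines are not linear. Hence $\mtx E^*\mtx R^{-1}$ is not the error of applying the routine to $\widehat{\mtx W}\mtx R^{-1}$. Assuming columnwise evaluation does not help either, because right multiplication by $\mtx R^{-1}$ mixes columns.

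The intermediate bound you aim for is in fact false. The model admits a rank-one $\mtx E^*$ of maximal admissible norm whose rows are parallel to the dominant left singular vector of $\mtx R^{-1}$. For such an error, $\|\mtx E^*\mtx R^{-1}\|_F=\|\mtx E^*\|_F\|\mtx R^{-1}\|_2$. With black-box shapers and an exact forward pass, this equals $uC\|\mtx M\|_F\cond_F(\mtx M)$, whereas your estimate would be of order $uC\|\mtx M\|_F\sqrt n$. So the factor $\cond_F(\mtx M)$ multiplying the second term of $u_M$ is not slack into which a $\sqrt n$ can be absorbed. This term genuinely produces it: it is the dominant $Cu\,\cond_F(\mtx M)$ contribution in Corollary~\ref{cor:cholqr-blackbox}.

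The repair is simpler than what you attempt, and it is the paper's argument: leave $\mtx R^{-1}$ outside. Use
$\|\mtx R^{-\trans}\mtx\Delta_{\rm out}^{M^*}\mtx E^*\mtx R^{-1}\|_F\le\|\mtx R^{-\trans}\mtx\Delta_{\rm out}^{M^*}\|_2\|\mtx E^*\|_F\|\mtx R^{-1}\|_2$.
Then bound $\|\mtx E^*\|_F$ through the true input $\widehat{\mtx W}=\mtx Q\mtx R+\mtx\Delta_{\rm out}^{M}\mtx E$:
\[
\|\mtx E^*\|_F\le u\|\mtx\Delta_{\rm in}^{M^*}\mtx Q\|_2\|\mtx R\|_F+u^2\|\mtx\Delta_{\rm in}^{M^*}\|_2\|\mtx\Delta_{\rm out}^{M}\|_2\|\mtx\Delta_{\rm in}^{M}\|_F .
\]
Since $\|\mtx R\|_F=\|\mtx M\|_F$ and $\|\mtx M\|_F\|\mtx R^{-1}\|_2=\cond_F(\mtx M)$, this gives exactly the second and third terms of $u_M$ times $\cond_F(\mtx M)$. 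Together with your (correct) first-term bound, the normalized Gram error is at most $u_M\cond_F(\mtx M)$. No assumption on how the routine is organized is needed.

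There are also two smaller issues in Step 3.

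First, the identity $\mtx K^{-\trans}\mtx K^{-1}-\mtx I=-(\mtx I+\mtx H)^{-1}\mtx H$ is false unless $\mtx K$ is normal, because $(\mtx I+\mtx H)^{-1}=(\mtx K^\trans\mtx K)^{-1}=\mtx K^{-1}\mtx K^{-\trans}$. The norm bound still holds, since the two matrices are orthogonally similar. More directly, you can use $\widehat{\mtx Q}^\trans\widehat{\mtx Q}-\mtx I=-\mtx K^{-\trans}\mtx H\mtx K^{-1}$ together with $\|\mtx K^{-1}\|_2^2\le(1-\|\mtx H\|_2)^{-1}$.

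Second, your estimate $\|\widehat{\mtx R}\|_F^2\approx\|\mtx M\|_F^2(1+O(0.2))$ needs justification, because $\widehat{\mtx R}$ appears on both sides of the Cholesky backward-error relation. Take traces in that relation to get $\|\widehat{\mtx R}\|_F^2\le\trace(\widehat{\mtx G})/(1-\gamma_{n+1})$, and combine it with $\trace(\widehat{\mtx G})\le(1+u_M\cond_F(\mtx M))\|\mtx M\|_F^2$.
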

\begin{proof}[Proof of \cref{thm:cholqr-stability}]
Write $\operatorname{sym}(\mtx B)=(\mtx B+\mtx B^\trans)/2$.
By Definition~\ref{def:operator-model}, the two operator applications
satisfy
\[
\widetilde{\mtx M}=\mtx M+\mtx\Delta_{\rm out}^{M}\mtx E,
\qquad
\mtx M^\trans[\widetilde{\mtx M}]
=\mtx M^\trans\widetilde{\mtx M}
 +\mtx\Delta_{\rm out}^{M^*}\mtx E^*,
\]
where
\begin{align*}
\|\mtx E\|_F
&\le u\|\mtx\Delta_{\rm in}^{M}\|_F,
\\
\|\mtx E^*\|_F
&\le u\|\mtx\Delta_{\rm in}^{M^*}\mtx Q\|_2\|\mtx M\|_F
 +u^2\|\mtx\Delta_{\rm in}^{M^*}\|_2
       \|\mtx\Delta_{\rm out}^{M}\|_2
       \|\mtx\Delta_{\rm in}^{M}\|_F.
\end{align*}
Thus the symmetrized computed Gram matrix is
\[
\widehat{\mtx G}=\mtx M^\trans\mtx M+\mtx F,
\qquad
\mtx F=\operatorname{sym}\!\left(
 \mtx M^\trans\mtx\Delta_{\rm out}^{M}\mtx E
 +\mtx\Delta_{\rm out}^{M^*}\mtx E^*\right).
\]
Using $\mtx R^{-\trans}\operatorname{sym}(\mtx B)\mtx R^{-1}
=\operatorname{sym}(\mtx R^{-\trans}\mtx B\mtx R^{-1})$,
the fact that symmetrization does not increase the Frobenius norm,
and $\mtx R^{-\trans}\mtx M^\trans=\mtx Q^\trans$, we obtain
\begin{equation}
\label{eq:cholqr-normalized-Gram}
\begin{aligned}
\|\mtx R^{-\trans}\mtx F\mtx R^{-1}\|_F
&\le \|\mtx R^{-1}\|_2
 \left(
 \|\mtx Q^\trans\mtx\Delta_{\rm out}^{M}\|_2\|\mtx E\|_F
 +\|\mtx R^{-\trans}\mtx\Delta_{\rm out}^{M^*}\|_2
   \|\mtx E^*\|_F\right)
\\
&\le u_M\cond_F(\mtx M)=:\rho.
\end{aligned}
\end{equation}

The first inequality in \cref{eq:cholqr-smallness-full} gives
$\rho\le4/21$. Hence $\widehat{\mtx G}$ is positive definite and
\[
\cond_2(\widehat{\mtx G})
\le\frac{1+\rho}{1-\rho}\cond_2(\mtx M)^2
\le\frac{25}{17}\cond_F(\mtx M)^2.
\]
Together with the second inequality in
\cref{eq:cholqr-smallness-full}, this implies
\[
1.01\,n\gamma_{n+1}\cond_2(\widehat{\mtx G})
\le1.01\frac{25}{17}\,n\gamma_{n+1}\cond_F(\mtx M)^2<1.
\]
Assumption~\ref{as:roundoff} therefore guarantees success and the
backward-error relation
\begin{equation}
\label{eq:cholqr-backward}
\widehat{\mtx R}^\trans\widehat{\mtx R}
=\widehat{\mtx G}+\mtx E^{\rm chol},
\qquad
|\mtx E^{\rm chol}|
\le\gamma_{n+1}|\widehat{\mtx R}|^\trans|\widehat{\mtx R}|.
\end{equation}
Taking traces and using
$|\trace(\mtx E^{\rm chol})|
 \le\gamma_{n+1}\|\widehat{\mtx R}\|_F^2$ yields
\begin{equation}
\label{eq:cholqr-Rhat-bound}
\|\widehat{\mtx R}\|_F^2
\le\frac{\trace(\widehat{\mtx G})}{1-\gamma_{n+1}}
\le\frac{1+\rho}{1-\gamma_{n+1}}\|\mtx M\|_F^2.
\end{equation}
Consequently,
\begin{equation}
\label{eq:cholqr-normalized-chol}
\|\mtx R^{-\trans}\mtx E^{\rm chol}\mtx R^{-1}\|_F
\le\frac{1+\rho}{1-\gamma_{n+1}}
       \gamma_{n+1}\cond_F(\mtx M)^2
\le1.4\,\gamma_{n+1}\cond_F(\mtx M)^2,
\end{equation}
where the last bound follows from $\rho\le4/21$ and
$\gamma_{n+1}\le2/15$, both consequences of
\cref{eq:cholqr-smallness-full}.

Set
\[
\mtx H=\mtx R^{-\trans}(\mtx F+\mtx E^{\rm chol})\mtx R^{-1}.
\]
By \cref{eq:cholqr-normalized-Gram,eq:cholqr-normalized-chol},
\[
\|\mtx H\|_F
\le\rho+1.4\,\gamma_{n+1}\cond_F(\mtx M)^2\le0.2,
\qquad
\widehat{\mtx R}^\trans\widehat{\mtx R}
=\mtx R^\trans(\mtx I+\mtx H)\mtx R.
\]
In particular, $\widehat{\mtx R}$ is nonsingular and
$\|\mtx R\widehat{\mtx R}^{-1}\|_2^2
 \le(1-\|\mtx H\|_2)^{-1}$.
The identity
\[
\widehat{\mtx Q}^\trans\widehat{\mtx Q}-\mtx I
=-(\mtx R\widehat{\mtx R}^{-1})^\trans
       \mtx H(\mtx R\widehat{\mtx R}^{-1})
\]
therefore gives
\[
\|\widehat{\mtx Q}^\trans\widehat{\mtx Q}-\mtx I\|_F
\le\frac{\|\mtx H\|_F}{1-\|\mtx H\|_2}
\le1.25\,u_M\cond_F(\mtx M)
  +1.75\,\gamma_{n+1}\cond_F(\mtx M)^2,
\]
as required.
\end{proof}

\subsection{Stability of preconditioned Q-less CholQR}\label{app:proofs_pcholqr}

\begin{theorem}[Stability of preconditioned Q-less CholQR]
\label{thm:pcholqr-stability}
Let $\mtx M\in\R^{m\times n}$ have full column rank and suppose that it
satisfies Definition~\ref{def:operator-model}. Let
$\mtx P\in\R^{n\times n}$ be a nonsingular upper-triangular
preconditioner satisfying Assumption~\ref{as:inverse-preconditioner}, and suppose that
\[
\gamma_n\cond_F(\mtx P)\le\frac12.
\]
Consider \cref{alg:pcholqr} and define
\[
\mtx M^{\rm pre}:=\mtx M\mtx P^{-1},
\qquad
\widehat{\mtx Q}
:=
\mtx M\widehat{\mtx R}^{-1},
\qquad
\widehat{\mtx R}
=
\widehat{\mtx R}^{\rm chol}\mtx P,
\]
where the matrix products are interpreted in exact arithmetic.
Let
\[
\mtx M=\mtx Q\mtx R
\]
be the exact QR factorization of $\mtx M$.

Define
\begin{equation}
\label{eq:pcholqr-upre-full}
\begin{aligned}
u_{\rm pre}
:={}&
2.5\,\gamma_n\cond_F(\mtx P)
\left(
\sqrt n+\cond_2(\mtx M^{\rm pre})
\right)
\\
&+
1.25\,u
\|\mtx Q^\trans\mtx\Delta_{\rm out}^{M}\|_2
\|\mtx\Delta_{\rm in}^{M}\mtx R^{-1}\|_2
\\
&+
1.25\,u
\|\mtx R^{-\trans}\mtx\Delta_{\rm out}^{M^*}\|_2
\|\mtx\Delta_{\rm in}^{M^*}\mtx Q\|_2
\\
&+
2.5\,u\gamma_n\sqrt n\,\cond_F(\mtx P)
\|\mtx R^{-\trans}\mtx\Delta_{\rm out}^{M^*}\|_2
\|\mtx\Delta_{\rm in}^{M^*}\|_2
\\
&+
1.5\,u^2
\|\mtx R^{-\trans}\mtx\Delta_{\rm out}^{M^*}\|_2
\|\mtx\Delta_{\rm in}^{M^*}\|_2
\|\mtx\Delta_{\rm in}^{M}\mtx R^{-1}\|_2
\|\mtx\Delta_{\rm out}^{M}\|_2.
\end{aligned}
\end{equation}

If, in addition to the roundoff regime of Assumption~\ref{as:roundoff},
\begin{equation}
\label{eq:pcholqr-smallness-full}
1.05\,u_{\rm pre}
\cond_F(\mtx M^{\rm pre})
+
1.5\,\gamma_{n+1}
\cond_F(\mtx M^{\rm pre})^2
\le 0.2,
\qquad
1.6\,n\gamma_{n+1}
\cond_F(\mtx M^{\rm pre})^2
\le 1,
\end{equation}
then the Cholesky factorization succeeds,
$\widehat{\mtx R}^{\rm chol}$ is nonsingular, and
\begin{equation}
\label{eq:pcholqr-Q-bound-full}
\boxed{
\left\|
\widehat{\mtx Q}^{\trans}\widehat{\mtx Q}-\mtx I
\right\|_F
\le
1.25\,u_{\rm pre}
\cond_F(\mtx M^{\rm pre})
+
1.75\,\gamma_{n+1}
\cond_F(\mtx M^{\rm pre})^2 .
}
\end{equation}
\end{theorem}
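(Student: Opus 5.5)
The plan is to reduce everything to \cref{thm:cholqr-stability} applied to the composite operator $\mtx M^{\rm pre}=\mtx M\mtx P^{-1}$, realized as $\mtx M[\mtx P^{\rm inv}[\cdot]]$ and $\mtx P^{\rm invT}[\mtx M^\trans[\cdot]]$. Since \cref{alg:pcholqr} is literally \cref{alg:cholqr} on this operator, and $\widehat{\mtx R}=\widehat{\mtx R}^{\rm chol}\mtx P$ is formed exactly, we have $\widehat{\mtx Q}=\mtx M\mtx P^{-1}(\widehat{\mtx R}^{\rm chol})^{-1}$. This is exactly the $\widehat{\mtx Q}$ of \cref{thm:cholqr-stability} for $\mtx M^{\rm pre}$. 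The exact QR factorization of $\mtx M^{\rm pre}$ is $\mtx Q(\mtx R\mtx P^{-1})$, so its orthogonal factor is $\mtx Q$ and its triangular factor is $\mtx R^{\rm pre}:=\mtx R\mtx P^{-1}$, up to the diagonal sign normalization, which affects no norm. Consequently, \cref{eq:pcholqr-smallness-full} and \cref{eq:pcholqr-Q-bound-full} are the hypotheses and conclusion of \cref{thm:cholqr-stability} with $\cond_F(\mtx M)$ replaced by $\cond_F(\mtx M^{\rm pre})$. It then suffices to show that the quantity $u_{M^{\rm pre}}$ of \cref{eq:cholqr-operator-error-full} satisfies $u_{M^{\rm pre}}\le u_{\rm pre}$.

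First, take the forward shapers from \cref{prop:preconditioned-forward-shapers}:
\[
\mtx\Delta_{\rm out}=[\mtx M^{\rm pre}\;\;\mtx\Delta_{\rm out}^M],\qquad
u\mtx\Delta_{\rm in}=[2\gamma_n\cond_F(\mtx P)\mtx I;\;u\mtx\Delta_{\rm in}^M\mtx P^{-1}].
\]
To get a first-order-free statement, I would redo the proposition's argument while keeping the $\mathcal O(u^2)$ pieces. These are absorbed by the factor $2$ from $(1-\gamma_n\cond_F(\mtx P))^{-1}\le2$ and by the slack in the constants $1.25$ and $2.5$. In the same way, I would derive adjoint shapers:
\[
\mtx\Delta_{\rm out}^{*}=[\mtx P^{-\trans}\;\;\mtx P^{-\trans}\mtx\Delta_{\rm out}^{M^*}],\qquad
u\mtx\Delta_{\rm in}^*=[2\gamma_n\cond_F(\mtx P)(\mtx M^{\rm pre})^\trans;\;u\mtx\Delta_{\rm in}^{M^*}],
\]
where the first block encodes the backward error of the triangular solve applied to $\mtx M^\trans[\vct x]$. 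I would then choose the free scalings from \cref{lem:composition} to balance the blocks, taking $\alpha$ of order one.

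Next, I would evaluate the two main terms of $u_{M^{\rm pre}}$ using $\mtx R^{\rm pre}=\mtx R\mtx P^{-1}$. For the forward term, the identities $\|\mtx Q^\trans\mtx M^{\rm pre}\|_2=\|\mtx M^{\rm pre}\|_2$ and $\|\mtx I\|_F=\sqrt n$ give a contribution of order $\gamma_n\cond_F(\mtx P)\sqrt n$. The $\mtx M$-block gives $u\|\mtx Q^\trans\mtx\Delta_{\rm out}^M\|_2\|\mtx\Delta_{\rm in}^M\mtx R^{-1}\mtx R^{\rm pre}\|_F/\|\mtx M^{\rm pre}\|_F$. Here I would use
\[
\|\mtx X\mtx R^{\rm pre}\|_F\le\|\mtx X\|_2\|\mtx R^{\rm pre}\|_F=\|\mtx X\|_2\|\mtx M^{\rm pre}\|_F,
\]
which produces the second line of \cref{eq:pcholqr-upre-full}. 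For the adjoint term, the key cancellation is
\[
(\mtx R^{\rm pre})^{-\trans}\mtx P^{-\trans}=\mtx R^{-\trans}.
\]
The inverse-preconditioner block then gives $\|\mtx R^{-\trans}\|_2\cdot2\gamma_n\cond_F(\mtx P)\|(\mtx M^{\rm pre})^\trans\mtx Q\|_2$. Balancing this against the forward block, or equivalently using $\|\mtx R^{-1}\|_2\|\mtx M^{\rm pre}\|_2$ relative to the normalization, should yield the $\cond_2(\mtx M^{\rm pre})$ term. The $\mtx M^\trans$ block gives exactly the third line. The cross terms involving $\mtx E_P$ inside $\mtx M^\trans[\cdot]$ and the $u^2$ product term produce the last two lines.

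The main obstacle is bookkeeping. One must make sure that the normalization by $\|\mtx M^{\rm pre}\|_F$ in the first term of $u_M$, together with the multiplication by $\cond_F(\mtx M^{\rm pre})$, indeed produces $\cond_2(\mtx M^{\rm pre})$ and $\sqrt n$ rather than $\cond_F$. One must also check that the second-order terms, namely the $\mathcal O(u^2)$ of the composition rule and the $u^2$ term of \cref{eq:cholqr-operator-error-full}, fit into the stated constants. I would handle this by writing the adjoint action exactly as
\[
(\mtx P+\delta\mtx P)^{-\trans}\bigl(\mtx M^\trans\widetilde{\mtx M}+\mtx\Delta_{\rm out}^{M^*}\mtx E^*\bigr),
\]
expanding the terms directly rather than through the composition lemma, and bounding each piece with $\gamma_n\cond_F(\mtx P)\le\tfrac12$.
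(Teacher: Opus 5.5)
Your overall strategy and your ingredients ($\mtx R^{\rm pre}=\mtx R\mtx P^{-1}$, $(\mtx R^{\rm pre})^{-\trans}\mtx P^{-\trans}=\mtx R^{-\trans}$, $\|\mtx X\mtx R^{\rm pre}\|_F\le\|\mtx X\|_2\|\mtx M^{\rm pre}\|_F$) are the right ones. The gap is the reduction ``it suffices to show $u_{M^{\rm pre}}\le u_{\rm pre}$'', which cannot be carried out. The coefficient $u_M$ in \eqref{eq:cholqr-operator-error-full} is a worst-case expression. Its $u^2$ term pairs $\|\mtx\Delta_{\rm in}^{M^*}\|_2$ with the \emph{unprojected} $\|\mtx\Delta_{\rm out}^{M}\|_2$, because \cref{thm:cholqr-stability} charges the adjoint input shaper for the full forward error. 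For the composite operator, the adjoint shaper contains the triangular-solve block: output $\mtx P^{-\trans}$, input of norm $2\gamma_n\|\mtx P\|_F\|\mtx M^{\rm pre}\|_2/u$. The forward shaper contains the block $(\mtx\Delta_{\rm out}^M,\mtx\Delta_{\rm in}^M\mtx P^{-1})$. Since $\|[\alpha_1\mtx X_1\;\alpha_2\mtx X_2]\|_2\,\|[\alpha_1^{-1}\mtx Y_1;\alpha_2^{-1}\mtx Y_2]\|_2\ge\|\mtx X_i\|_2\|\mtx Y_i\|_2$ for each $i$, no choice of the free scalings removes the resulting contribution
\[
2\gamma_n\|\mtx P\|_F\|\mtx M^{\rm pre}\|_2\|\mtx R^{-1}\|_2\;u\,\frac{\|\mtx\Delta_{\rm in}^{M}\mtx P^{-1}\|_F\,\|\mtx\Delta_{\rm out}^{M}\|_2}{\|\mtx M^{\rm pre}\|_F}
\]
to $u_{M^{\rm pre}}$. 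Optimal scalings (not ones of order one) do collapse the first-order cross terms, but not this one. Nothing in $u_{\rm pre}$ dominates it. Take an exact adjoint ($\mtx\Delta_{\rm in}^{M^*}=\mtx 0$) and forward errors in $\range(\mtx M)^\perp$ ($\mtx Q^\trans\mtx\Delta_{\rm out}^M=\mtx 0$), and let $\|\mtx\Delta_{\rm out}^M\|_2$ grow. Then $u_{\rm pre}=2.5\gamma_n\cond_F(\mtx P)(\sqrt n+\cond_2(\mtx M^{\rm pre}))$ is unchanged, while $u_{M^{\rm pre}}$ grows without bound. The hypotheses of \cref{thm:cholqr-stability} therefore fail, although the statement you are proving applies and is true here: $\mtx M^\trans\mtx\Delta_{\rm out}^M=\mtx 0$, so the Gram matrix never sees this error. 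Routed through $u_{M^{\rm pre}}$, your argument can only prove a weaker theorem with an extra term in $u_{\rm pre}$.

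The missing observation is this. Write $\widetilde{\mtx A}=\mtx M^{\rm pre}+\mtx F_{\rm f}$ with $\mtx F_{\rm f}=\mtx M^{\rm pre}\mtx E_P+\mtx\Delta_{\rm out}^M\mtx E_M$. The adjoint solve acts on $\mtx P^{-\trans}\mtx M^\trans\widetilde{\mtx A}=(\mtx R^{\rm pre})^\trans\mtx Q^\trans\widetilde{\mtx A}$, so the forward error enters the solve error only through $\mtx Q^\trans\mtx F_{\rm f}$. Only the adjoint application of $\mtx M$ sees $\mtx F_{\rm f}$ unprojected, and that is exactly what the last two lines of $u_{\rm pre}$ account for. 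To exploit this, you must bypass the formula for $u_M$, which is what the paper does:
\begin{itemize}
\item write $\widehat{\mtx G}=(\mtx M^{\rm pre})^\trans\mtx M^{\rm pre}+\mtx F$ with the four error sources $\mtx E_P,\mtx E_M,\mtx E_{M^*},\mtx E_P^*$ expanded explicitly;
\item prove $\|(\mtx R^{\rm pre})^{-\trans}\mtx F(\mtx R^{\rm pre})^{-1}\|_F\le u_{\rm pre}\cond_F(\mtx M^{\rm pre})$ directly;
\item reuse only the Cholesky half of the proof of \cref{thm:cholqr-stability}, which from \eqref{eq:cholqr-normalized-Gram} onward depends on nothing but this normalized bound.
\end{itemize}
Your last paragraph gestures at direct expansion, but as long as its output is fed into $u_{M^{\rm pre}}$, the projection is lost.

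Separately, your adjoint-solve input shaper is mis-scaled. The solve error on $\vct z$ is bounded by $2\gamma_n\|\mtx P\|_F\|\mtx P^{-\trans}\vct z\|_2$, so the block should be $2\gamma_n\|\mtx P\|_F(\mtx M^{\rm pre})^\trans$, not $2\gamma_n\cond_F(\mtx P)(\mtx M^{\rm pre})^\trans$. With the correct scale, the $\cond_2(\mtx M^{\rm pre})$ term follows directly from $\|\mtx R^{-1}\|_2\le\|\mtx P^{-1}\|_2\|(\mtx R^{\rm pre})^{-1}\|_2$. This turns $2\gamma_n\|\mtx P\|_F\|\mtx R^{-1}\|_2\|\mtx M^{\rm pre}\|_2$ into $2\gamma_n\cond_F(\mtx P)\cond_2(\mtx M^{\rm pre})$, and no balancing against the forward block is involved.
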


\begin{proof}[Proof of \cref{thm:pcholqr-stability}]
Set $\mtx A=\mtx M^{\rm pre}$ and
$\mtx R^{\rm pre}=\mtx R\mtx P^{-1}$, so that
$\mtx A=\mtx Q\mtx R^{\rm pre}$. Write
\[
a=2\gamma_n\sqrt n\,\cond_F(\mtx P),
\qquad
b=2\gamma_n\cond_F(\mtx P)\cond_2(\mtx A).
\]
Since $u_{\rm pre}\ge1.25(a+b)$, the first condition in
\cref{eq:pcholqr-smallness-full} implies
\begin{equation}
\label{eq:pcholqr-solve-smallness}
a+b\le\frac{16}{105},
\qquad
a+b+ab\le1.25(a+b),
\qquad
(1+a)(1+b)<1.25.
\end{equation}

Assumption~\ref{as:inverse-preconditioner} and
$\gamma_n\cond_F(\mtx P)\le1/2$ give the residual representation
\begin{equation}
\label{eq:pcholqr-forward-solve-error}
\mtx P^{\rm inv}[\mtx I]=\mtx P^{-1}(\mtx I+\mtx E_P),
\qquad
\|\mtx E_P\|_F
\le2\gamma_n\|\mtx P\|_F\|\mtx P^{-1}\|_F\le a.
\end{equation}
The subsequent application of $\mtx M$ satisfies
\[
\widetilde{\mtx A}
=\mtx M[\mtx P^{\rm inv}[\mtx I]]
=\mtx A+\mtx F_{\rm f},
\qquad
\mtx F_{\rm f}=\mtx A\mtx E_P+\mtx\Delta_{\rm out}^{M}\mtx E_M,
\]
where, using $\mtx P^{-1}=\mtx R^{-1}\mtx R^{\rm pre}$,
\[
\|\mtx E_M\|_F
\le u(1+a)\|\mtx\Delta_{\rm in}^{M}\mtx R^{-1}\|_2
             \|\mtx A\|_F.
\]
For the adjoint application, write
\[
\mtx Z=\mtx M^\trans[\widetilde{\mtx A}]
=\mtx M^\trans\widetilde{\mtx A}
 +\mtx\Delta_{\rm out}^{M^*}\mtx E_{M^*}.
\]
Definition~\ref{def:operator-model} gives
\[
\|\mtx E_{M^*}\|_F
\le u\|\mtx\Delta_{\rm in}^{M^*}\mtx Q\|_2\|\mtx A\|_F
 +u\|\mtx\Delta_{\rm in}^{M^*}\|_2\|\mtx F_{\rm f}\|_F.
\]
These bounds and the expression for $\mtx F_{\rm f}$ imply
\begin{align*}
&\frac{
 \|\mtx Q^\trans\mtx F_{\rm f}\|_F
 +\|\mtx R^{-\trans}\mtx\Delta_{\rm out}^{M^*}\|_2
  \|\mtx E_{M^*}\|_F}{\|\mtx A\|_F}
\\
&\quad\le a+(1+a)\Bigl[
 u\|\mtx Q^\trans\mtx\Delta_{\rm out}^{M}\|_2
   \|\mtx\Delta_{\rm in}^{M}\mtx R^{-1}\|_2
\\
&\qquad\quad+
 u\|\mtx R^{-\trans}\mtx\Delta_{\rm out}^{M^*}\|_2
   \|\mtx\Delta_{\rm in}^{M^*}\mtx Q\|_2
 +ua\|\mtx R^{-\trans}\mtx\Delta_{\rm out}^{M^*}\|_2
     \|\mtx\Delta_{\rm in}^{M^*}\|_2
\\
&\qquad\quad+
 u^2\|\mtx R^{-\trans}\mtx\Delta_{\rm out}^{M^*}\|_2
     \|\mtx\Delta_{\rm in}^{M^*}\|_2
     \|\mtx\Delta_{\rm in}^{M}\mtx R^{-1}\|_2
     \|\mtx\Delta_{\rm out}^{M}\|_2\Bigr].
\end{align*}
The bracket is at most $\frac45u_{\rm pre}-a-b$ by
\cref{eq:pcholqr-upre-full}. In particular, including all mixed and
higher-order terms,
\begin{equation}
\label{eq:pcholqr-forward-adjoint-bound}
\frac{
 \|\mtx Q^\trans\mtx F_{\rm f}\|_F
 +\|\mtx R^{-\trans}\mtx\Delta_{\rm out}^{M^*}\|_2
  \|\mtx E_{M^*}\|_F}{\|\mtx A\|_F}
\le a+(1+a)\left(\frac45u_{\rm pre}-a-b\right).
\end{equation}

The final adjoint inverse application has the analogous residual
representation
\begin{equation}
\label{eq:pcholqr-adjoint-solve-error}
\mtx P^{\rm invT}[\mtx Z]=\mtx P^{-\trans}(\mtx Z+\mtx E_P^*),
\qquad
\|\mtx E_P^*\|_F
\le2\gamma_n\|\mtx P\|_F\|\mtx P^{-\trans}\mtx Z\|_F.
\end{equation}
Using $\mtx P^{-\trans}\mtx M^\trans=\mtx A^\trans$ and
$\mtx P^{-\trans}\mtx\Delta_{\rm out}^{M^*}
 =(\mtx R^{\rm pre})^\trans
   \mtx R^{-\trans}\mtx\Delta_{\rm out}^{M^*}$ gives
\begin{equation}
\label{eq:pcholqr-adjoint-solve-bound}
\begin{aligned}
\|\mtx R^{-\trans}\|_2\|\mtx E_P^*\|_F
\le b\Bigl(&\|\mtx A\|_F+\|\mtx Q^\trans\mtx F_{\rm f}\|_F
\\
&+\|\mtx R^{-\trans}\mtx\Delta_{\rm out}^{M^*}\|_2
   \|\mtx E_{M^*}\|_F\Bigr).
\end{aligned}
\end{equation}
Here we also used
$\|\mtx R^{-\trans}\|_2
 \le\|(\mtx R^{\rm pre})^{-1}\|_2\|\mtx P^{-1}\|_2$.

The symmetrized matrix supplied to Cholesky is
$\widehat{\mtx G}=\mtx A^\trans\mtx A+\mtx F$, where
\[
\mtx F=\operatorname{sym}\!\left(
\mtx A^\trans\mtx F_{\rm f}
+\mtx P^{-\trans}\mtx\Delta_{\rm out}^{M^*}\mtx E_{M^*}
+\mtx P^{-\trans}\mtx E_P^*\right).
\]
Since
$(\mtx R^{\rm pre})^{-\trans}\mtx A^\trans=\mtx Q^\trans$ and
$(\mtx R^{\rm pre})^{-\trans}\mtx P^{-\trans}=\mtx R^{-\trans}$,
\cref{eq:pcholqr-forward-adjoint-bound,eq:pcholqr-adjoint-solve-bound}
yield
\begin{align*}
\frac{\|(\mtx R^{\rm pre})^{-\trans}\mtx F
                (\mtx R^{\rm pre})^{-1}\|_F}{\cond_F(\mtx A)}
&\le b+(1+b)\left[a+(1+a)
                 \left(\frac45u_{\rm pre}-a-b\right)\right]
\\
&\le u_{\rm pre},
\end{align*}
where the last step uses \cref{eq:pcholqr-solve-smallness} and
$\frac45u_{\rm pre}-a-b\ge0$. Thus
\begin{equation}
\label{eq:pcholqr-normalized-Gram}
\|(\mtx R^{\rm pre})^{-\trans}\mtx F
             (\mtx R^{\rm pre})^{-1}\|_F
\le u_{\rm pre}\cond_F(\mtx M^{\rm pre}).
\end{equation}

Starting from \cref{eq:cholqr-normalized-Gram}, the Cholesky part of
the proof of \cref{thm:cholqr-stability} uses only this normalized
Gram bound. It therefore applies with $\mtx M$ replaced by
$\mtx M^{\rm pre}$ and $u_M$ by $u_{\rm pre}$.
Under \cref{eq:pcholqr-smallness-full}, the Cholesky factorization
succeeds, $\widehat{\mtx R}^{\rm chol}$ is nonsingular, and
\cref{eq:pcholqr-Q-bound-full} follows because exact accumulation gives
\[
\widehat{\mtx Q}
=\mtx M(\widehat{\mtx R}^{\rm chol}\mtx P)^{-1}
=\mtx M^{\rm pre}(\widehat{\mtx R}^{\rm chol})^{-1}.
\]
\end{proof}

\begin{proof}[Proof of Corollary~\ref{cor:pcholqr-blackbox}]
Substituting the error-shaping operators from
\cref{ex:blackbox-operator} into \cref{eq:pcholqr-upre-full} gives
\begin{align*}
u_{\rm pre}
={}&2.5\,\gamma_n\cond_F(\mtx P)
 \bigl(\sqrt n+\cond_2(\mtx M^{\rm pre})\bigr)
 +2.5\,Cu\cond_F(\mtx M)
\\
&+Cu\cond_F(\mtx M)
 \left[2.5\,\gamma_n\sqrt n\,\cond_F(\mtx P)
       +1.5\,Cu\cond_F(\mtx M)\right].
\end{align*}
The square bracket is at most $u_{\rm pre}$, and the first inequality
in \cref{eq:pcholqr-smallness-full} gives
$u_{\rm pre}\le4/21<0.31$. This proves the asserted bound for the
full coefficient. Substitution in \cref{thm:pcholqr-stability}, using
$1.25\cdot2.81<3.513$, proves the orthogonality estimate.

The same argument also gives a sufficient test used below: it suffices
to verify \cref{eq:pcholqr-smallness-full} with $u_{\rm pre}$ replaced
by the displayed black-box majorant in the corollary. Indeed, that
majorant also dominates the square bracket, so its first smallness
condition again bounds the bracket by $4/21$ and proves that it
bounds the full $u_{\rm pre}$. Moreover, it implies
$\gamma_n\cond_F(\mtx P)\le8/105<1/2$.
Thus this test verifies both the full coefficient condition and the
inverse-solve smallness condition.
\end{proof}

\subsection{Stability of Q-less CholQR2}
\label{app:proofs_cholqr2}

\begin{proof}[Proof of Corollary~\ref{cor:qless-cholqr2-blackbox}]
For the black-box shapers, \cref{eq:cholqr-operator-error-full} gives
\[
u_M=Cu\bigl(\sqrt n+\cond_F(\mtx M)\bigr)
     +C^2u^2\sqrt n\,\cond_F(\mtx M).
\]
Since $\cond_F(\mtx M)\ge\sqrt n$, the first inequality in
\cref{eq:qr2-blackbox-first-smallness} implies $Cu\sqrt n<0.03$ and
therefore
\[
u_M\le2.03\,Cu\cond_F(\mtx M)
     \le u_{\rm qr2}\cond_F(\mtx M).
\]
Thus \cref{eq:qr2-blackbox-first-smallness} verifies both smallness
conditions of \cref{thm:cholqr-stability}. The first pass succeeds
and, with $\mtx P=\widehat{\mtx R}^{\rm chol}_1$ and
$\mtx Q_1=\mtx M\mtx P^{-1}$, the theorem yields
\[
\|\mtx Q_1^\trans\mtx Q_1-\mtx I\|_F\le0.1.
\]
Consequently,
\[
\cond_2(\mtx Q_1)\le\sqrt{\frac{11}{9}}<1.11,
\qquad
\cond_F(\mtx Q_1)\le\sqrt{\frac{11n}{9}},
\qquad
\cond_F(\mtx P)\le\sqrt{\frac{11}{9}}\cond_F(\mtx M),
\]
where the last estimate follows from $\mtx M=\mtx Q_1\mtx P$.
Since $2.5\sqrt{11/9}<2.77$, the black-box majorant in
Corollary~\ref{cor:pcholqr-blackbox} is at most
$u_{\rm qr2}\cond_F(\mtx M)$.
These estimates and \cref{eq:qr2-blackbox-second-smallness} verify
the sufficient test established in the proof of that corollary.
In particular, both smallness conditions of
\cref{thm:pcholqr-stability} and its inverse-solve condition hold.

The second pass therefore succeeds, its accumulated triangular
factor is nonsingular, and \cref{thm:pcholqr-stability} gives
\[
\|\widehat{\mtx Q}^\trans\widehat{\mtx Q}-\mtx I\|_F
\le1.25\sqrt{\frac{11n}{9}}\,
      u_{\rm qr2}\cond_F(\mtx M)
  +1.75\frac{11n}{9}\gamma_{n+1}.
\]
Rounding the two numerical coefficients upward yields
\cref{eq:qr2-blackbox-final-bound}.
\end{proof}

\subsection{Stability of Q-less shifted CholQR3}
\label{app:proofs_cholqr3}

\begin{proof}[Proof of Lemma~\ref{lem:shifted-first-pass}]
It suffices to consider $u>0$, since the zero-roundoff case is exact.
Write
\[
\widetilde{\mtx M}=\mtx M[\mtx I]=\mtx M+\mtx D,
\qquad
\mtx M^\trans[\widetilde{\mtx M}]
=\mtx M^\trans\widetilde{\mtx M}+\mtx E.
\]
The black-box bounds give
\[
\|\mtx D\|_F\le Cu\sqrt n\,\|\mtx M\|_F,
\qquad
\|\mtx E\|_F\le Cu\|\mtx M\|_F\|\widetilde{\mtx M}\|_F.
\]
Thus $\widehat{\mtx G}=\mtx M^\trans\mtx M+\mtx F$, with
$\mtx F=\operatorname{sym}(\mtx M^\trans\mtx D+\mtx E)$, and
\begin{align*}
\|\mtx F\|_F
&\le Cu\bigl(\sqrt n+1+Cu\sqrt n\bigr)\|\mtx M\|_F^2
 \le2.1\,Cu\sqrt n\,\|\mtx M\|_F^2,
\\
|\trace(\mtx F)|
&\le\|\mtx M\|_F\|\mtx D\|_F+\sqrt n\,\|\mtx E\|_F
\\
&\le Cu\sqrt n\bigl(2+Cu\sqrt n\bigr)\|\mtx M\|_F^2
 \le0.2\,\|\mtx M\|_F^2.
\end{align*}
Both bounds use the first inequality in
\cref{eq:shifted-pass-smallness}.
The shift condition also gives $\|\mtx F\|_2\le s/4$, whence
\[
\lambda_{\min}(\widehat{\mtx G}+s\mtx I)\ge\frac34s,
\qquad
\lambda_{\max}(\widehat{\mtx G}+s\mtx I)
\le\|\mtx M\|_F^2+\frac54s.
\]
Since $s\ge2n\gamma_{n+1}\|\mtx M\|_F^2$ and
$n\gamma_{n+1}\le1/6$,
\[
1.01\,n\gamma_{n+1}\cond_2(\widehat{\mtx G}+s\mtx I)
\le1.01\left(\frac23+\frac5{18}\right)<1.
\]
Assumption~\ref{as:roundoff} therefore guarantees success and
\[
\widehat{\mtx R}_1^\trans\widehat{\mtx R}_1
=\mtx M^\trans\mtx M+s\mtx I+\mtx F+\mtx E^{\rm chol}.
\]
The first inequality in \cref{eq:cholqr-Rhat-bound}, now with input
$\widehat{\mtx G}+s\mtx I$, gives
\[
\|\widehat{\mtx R}_1\|_F^2
\le\frac{1.2\|\mtx M\|_F^2+ns}{1-\gamma_{n+1}},
\qquad
\|\mtx E^{\rm chol}\|_2
\le\gamma_{n+1}\|\widehat{\mtx R}_1\|_F^2.
\]
This proves the Frobenius-norm bound in \cref{eq:shift-R-bounds}.
Using \cref{eq:shifted-pass-smallness} and $\gamma_{n+1}\le1/6$,
\begin{align*}
\|\mtx F+\mtx E^{\rm chol}\|_2
&\le\bigl(2.1\,Cu\sqrt n+1.5\,\gamma_{n+1}\bigr)\|\mtx M\|_F^2
  +\frac{n\gamma_{n+1}}{1-\gamma_{n+1}}s
\\
&\le\frac14s+\frac15s=0.45\,s.
\end{align*}
Consequently,
\[
\mtx M^\trans\mtx M+0.55\,s\mtx I
\preceq\widehat{\mtx R}_1^\trans\widehat{\mtx R}_1
\preceq\mtx M^\trans\mtx M+1.45\,s\mtx I.
\]
The lower bound proves nonsingularity and the remaining assertion in
\cref{eq:shift-R-bounds}. The upper bound is at most
$\bigl(1+1.45s/\sigma_{\min}(\mtx M)^2\bigr)\mtx M^\trans\mtx M$.
Congruence with $\widehat{\mtx R}_1^{-1}$ therefore gives
\cref{eq:shift-cond}, since
$s/\sigma_{\min}(\mtx M)^2=cun\cond_F(\mtx M)^2$.
\end{proof}

\begin{proof}[Proof of Corollary~\ref{cor:scholqr3-stability}]
For $u=0$, the result is exact with $u_{\rm qr3}=0$, its
continuous extension; hence assume $u>0$.
Let $\widehat{\mtx R}_i$ denote the accumulated factor after pass
$i$ of \cref{alg:scholqr3}, and set
$\mtx Q_i=\mtx M\widehat{\mtx R}_i^{-1}$.
By Lemma~\ref{lem:shifted-first-pass} and the definition of $\beta$
in \cref{eq:scholqr3-beta},
\[
\|\mtx Q_1\|_2\le1,
\qquad
\sigma_{\min}(\mtx Q_1)^{-1}\le\sqrt\beta,
\qquad
\cond_F(\mtx Q_1)\le\sqrt{n\beta}.
\]
Since $n\ge10$ and $n\gamma_{n+1}\le1/6$, the same lemma gives
\begin{equation}
\label{eq:qr3-first-preconditioner-bound}
\begin{aligned}
\cond_F(\widehat{\mtx R}_1)
&\le
\sqrt{\frac{1.2+cun^2}{0.55\,cun(1-\gamma_{n+1})}}
\le\frac1{\sqrt n}\sqrt{\frac{2.22}{cu}+1.85n^2},
\\
\cond_F(\mtx M)
&\le\sqrt\beta\,\cond_F(\widehat{\mtx R}_1).
\end{aligned}
\end{equation}
The last inequality uses
$\mtx M=\mtx Q_1\widehat{\mtx R}_1$ and $\|\mtx Q_1\|_2\le1$.

For the second pass, the black-box majorant in
Corollary~\ref{cor:pcholqr-blackbox} satisfies
\begin{align*}
&2.5\,\gamma_n\cond_F(\widehat{\mtx R}_1)
  \bigl(\sqrt n+\cond_2(\mtx Q_1)\bigr)
  +2.81\,Cu\cond_F(\mtx M)
\\
&\quad\le
\frac{\sqrt\beta}{\sqrt n}
\sqrt{\frac{2.22}{cu}+1.85n^2}
\left[2.5(\sqrt n+1)\gamma_n+2.81\,Cu\right]
\le\frac14\sqrt{n\beta}\,u_{\rm qr3}.
\end{align*}
Here and below, comparisons with $u_{\rm qr3}$ follow
coefficientwise from its definition and $n\ge10$.
The first inequality in \cref{eq:scholqr3-smallness} therefore
verifies the first inequality in the sufficient test established in
the proof of Corollary~\ref{cor:pcholqr-blackbox}. The second follows
from
\[
1.6\,n\gamma_{n+1}\cond_F(\mtx Q_1)^2
\le1.6\,n^2\beta\gamma_{n+1}<1,
\]
using \cref{eq:scholqr3-chol-success}.
This test also verifies the inverse-solve condition. Hence the second
pass succeeds, and \cref{thm:pcholqr-stability} yields
\begin{align*}
\|\mtx Q_2^\trans\mtx Q_2-\mtx I\|_F
&\le\frac5{16}\,n\beta u_{\rm qr3}
      +1.75\,n\beta\gamma_{n+1}
\\
&\le\frac1{16}+\frac{1.75}{1.9n}<\frac16.
\end{align*}
Consequently,
\[
\cond_2(\mtx Q_2)\le\sqrt{\frac75}<1.2,
\qquad
\cond_F(\mtx Q_2)\le1.2\sqrt n,
\qquad
\cond_F(\widehat{\mtx R}_2)\le1.2\cond_F(\mtx M).
\]

For the third pass, these estimates and
\cref{eq:qr3-first-preconditioner-bound} give
\begin{align*}
&2.5\,\gamma_n\cond_F(\widehat{\mtx R}_2)
  \bigl(\sqrt n+\cond_2(\mtx Q_2)\bigr)
  +2.81\,Cu\cond_F(\mtx M)
\\
&\quad\le
\frac{\sqrt\beta}{\sqrt n}
\sqrt{\frac{2.22}{cu}+1.85n^2}
\left[3(\sqrt n+1.2)\gamma_n+2.81\,Cu\right]
\le\sqrt{n\beta}\,u_{\rm qr3}.
\end{align*}
Thus the second inequality in \cref{eq:scholqr3-smallness} implies
\[
1.05\,u_{\rm pre}\cond_F(\mtx Q_2)
+1.5\,\gamma_{n+1}\cond_F(\mtx Q_2)^2
\le1.26\,n\sqrt\beta\,u_{\rm qr3}
   +2.16\,n\gamma_{n+1}\le0.2,
\]
where the same sufficient test controls the full coefficient and
the inverse-solve condition.
To verify the remaining Cholesky-success condition, note that
\cref{eq:shifted-pass-smallness} and $\cond_F(\mtx M)^2\ge n$ imply
$\beta\ge2.9\,n^2\gamma_{n+1}$. Together with
\cref{eq:scholqr3-chol-success}, this gives
\[
1.6\,n\gamma_{n+1}\cond_F(\mtx Q_2)^2
\le2.304\,n^2\gamma_{n+1}
\le\frac{2.304}{\sqrt{1.9\cdot2.9}}<1.
\]
Applying \cref{thm:pcholqr-stability} to the third pass now yields
\[
\|\widehat{\mtx Q}^{\trans}\widehat{\mtx Q}-\mtx I\|_F
\le1.5\,n\sqrt\beta\,u_{\rm qr3}
   +2.52\,n\gamma_{n+1}.
\]
All local Cholesky factors and their accumulated products are
nonsingular.
\end{proof}

\subsection{Stability of randomized Q-less CholQR}\label{subsec:stab_rcholqr}
\label{app:proofs_CQRRTO}

\begin{proof}[Proof of Lemma~\ref{lem:computed-sketch-conditioning}]
By the upper embedding inequality,
\[
\|\widehat{\mtx M}^{\rm sk}\|_2
\le\bigl(\sqrt{1+\varepsilon_2}+u_{\rm sk}\bigr)\|\mtx M\|_2.
\]
Hence Assumption~\ref{as:sketch} gives
\begin{align*}
\|\mtx E^{\rm sk}+\mtx E^{\rm qr}\|_2
&\le
\left[(1+u_{\rm qr})u_{\rm sk}
      +u_{\rm qr}\sqrt{1+\varepsilon_2}\right]\|\mtx M\|_2
\\
&=\rho_{\rm sk}\,\sigma_{\min}(\mtx M).
\end{align*}
Using
$\mtx Q^{\rm sk}\mtx P
 =\mtx S\mtx M+\mtx E^{\rm sk}+\mtx E^{\rm qr}$
and the orthonormality of $\mtx Q^{\rm sk}$, we obtain, for every
$\vct x\in\R^n$,
\begin{equation}
\label{eq:computed-sketch-two-sided}
\bigl(\sqrt{1-\varepsilon_1}-\rho_{\rm sk}\bigr)
\|\mtx M\vct x\|_2
\le\|\mtx P\vct x\|_2
\le
\bigl(\sqrt{1+\varepsilon_2}+\rho_{\rm sk}\bigr)
\|\mtx M\vct x\|_2.
\end{equation}
The lower coefficient is positive, so $\mtx P$ is nonsingular.
Substituting $\vct x=\mtx P^{-1}\vct y$ yields
\[
\sigma_{\min}(\mtx M\mtx P^{-1})
\ge\frac1{\sqrt{1+\varepsilon_2}+\rho_{\rm sk}},
\qquad
\sigma_{\max}(\mtx M\mtx P^{-1})
\le\frac1{\sqrt{1-\varepsilon_1}-\rho_{\rm sk}}.
\]
These estimates give the first two bounds in
\cref{eq:CQRRTO-preconditioner-bounds}.
Furthermore, \cref{eq:computed-sketch-two-sided} implies
\[
\|\mtx P\|_F
\le\bigl(\sqrt{1+\varepsilon_2}+\rho_{\rm sk}\bigr)\|\mtx M\|_F,
\qquad
\sigma_{\min}(\mtx P)
\ge\bigl(\sqrt{1-\varepsilon_1}-\rho_{\rm sk}\bigr)
\sigma_{\min}(\mtx M).
\]
Their quotient is the remaining bound
$\cond_F(\mtx P)\le\tau\cond_F(\mtx M)$.
\end{proof}

\begin{proof}[Proof of Corollary~\ref{cor:computed-sketched-preconditioner-stability}]
By Lemma~\ref{lem:computed-sketch-conditioning},
$\mtx P=\widehat{\mtx R}^{\rm sk}$ is nonsingular and
\[
\cond_2(\mtx M^{\rm pre})\le\tau,
\qquad
\cond_F(\mtx M^{\rm pre})\le\sqrt n\,\tau,
\qquad
\cond_F(\mtx P)\le\tau\cond_F(\mtx M).
\]
The black-box majorant in Corollary~\ref{cor:pcholqr-blackbox}
therefore satisfies
\begin{align*}
&2.5\,\gamma_n\cond_F(\mtx P)
 \bigl(\sqrt n+\cond_2(\mtx M^{\rm pre})\bigr)
 +2.81\,Cu\cond_F(\mtx M)
\\
&\qquad\le
 \left[2.5\,\gamma_n\tau(\sqrt n+\tau)+2.81\,Cu\right]
 \cond_F(\mtx M)
 =u_{\rm CQRRTO}\cond_F(\mtx M).
\end{align*}
Together with the conditioning bounds above,
\cref{eq:CQRRTO-smallness} verifies the sufficient test established
in the proof of Corollary~\ref{cor:pcholqr-blackbox}.
Thus $u_{\rm pre}\le u_{\rm CQRRTO}\cond_F(\mtx M)$, both
inequalities in \cref{eq:pcholqr-smallness-full} hold, and
$\gamma_n\cond_F(\mtx P)<1/2$.
Applying \cref{thm:pcholqr-stability} and substituting these bounds
in \cref{eq:pcholqr-Q-bound-full} gives
\cref{eq:computed-sketch-Q-bound}.
The theorem also guarantees that $\widehat{\mtx R}^{\rm chol}$ is
nonsingular, so the returned exact product
$\widehat{\mtx R}^{\rm chol}\mtx P$ is nonsingular.
\end{proof}

\end{document}